\documentclass[reqno,11pt]{amsart}
\usepackage{amsthm,amsfonts,amssymb,euscript,
mathrsfs,graphics,color,amsmath,latexsym,marginnote}
 


\theoremstyle{plain}

\usepackage{hyperref}
\usepackage{times}

\setlength{\marginparwidth}{3cm}

\numberwithin{equation}{section}

\textwidth 19cm
\textheight 22.5cm 
\evensidemargin 0cm
\topmargin 0.05cm
\headheight 0.38cm
\headsep 0.85cm
\footskip 2.35cm

\oddsidemargin   0cm  
\evensidemargin 0cm  
\topmargin  0.05cm      
\headheight 0.37cm      
\headsep    0.38cm      
\textwidth  16,5cm      
\textheight 21.5cm     
\footskip   1.5cm      




\newtheorem{theorem}{Theorem}[section]
\newtheorem{proposition}[theorem]{Proposition}
\newtheorem{lemma}[theorem]{Lemma}
\newtheorem{corollary}[theorem]{Corollary}

\newtheorem{remark}[theorem]{Remark}
\newtheorem{remarks}[theorem]{Remark}

\newtheorem{definition}[theorem]{Definition}
\newcommand{\be}{\begin{equation}}
\newcommand{\ee}{\end{equation}}
\newcommand{\om}{\omega}
\newcommand{\eps}{\varepsilon}
\newcommand{\e}{\varepsilon}

\newcommand{\al}{\alpha}

\newcommand{\R}{\mathbb R}
\newcommand{\C}{\mathbb C}

\newcommand{\Z}{\mathbb Z}

\newcommand{\N}{\mathbb N}
\newcommand{\T}{\mathbb T}

\newcommand{\ka}{\kappa}
\newcommand{\s }{\sigma }
\newcommand{\ii }{{\rm i} }

\newcommand{\g }{\gamma}

\newcommand{\M}{\mathcal M}

\newcommand{\vphi}{\varphi }

\renewcommand{\t }{\tau }
\renewcommand{\o }{\omega }
\renewcommand{\O }{\mathcal{O} }

\newcommand{\bral}{[ \! [} 
\newcommand{\brar}{] \! ]}

\renewcommand{\L}{\mathcal{L}}

\newcommand{\A}{\mathcal{A}}
\newcommand{\Nc}{\mathcal{N}}
\newcommand{\mR}{\mathcal{R}}

\newcommand{\pa}{\partial}



\newcommand{\Rc}{\mathcal R}

\renewcommand{\S}{\mathbb{S}}

\def\ba{\begin{aligned}}
\def\ea{\end{aligned}}
\def\beginm{\begin{multline}}
\def\endm{\end{multline}}




\setcounter{tocdepth}{1}
\makeatletter


%

%
\def\l@subsection{\@tocline{2}{0pt}{2.5pc}{5pc}{}}
\def\l@subsubsection{\@tocline{3}{0pt}{4.5pc}{5pc}{}}
\renewcommand\tocchapter[3]{%
  \indentlabel{\@ifnotempty{#2}{\ignorespaces#2.\quad}}#3%
}
\newcommand\@dotsep{4.5}
\def\@tocline#1#2#3#4#5#6#7{\relax
  \ifnum #1>\c@tocdepth 
  \else
    \par \addpenalty\@secpenalty\addvspace{#2}%
    \begingroup \hyphenpenalty\@M
    \@ifempty{#4}{%
      \@tempdima\csname r@tocindent\number#1\endcsname\relax
    }{%
      \@tempdima#4\relax
    }%
    \parindent\z@ \leftskip#3\relax \advance\leftskip\@tempdima\relax
    \rightskip\@pnumwidth plus1em \parfillskip-\@pnumwidth
    #5\leavevmode\hskip-\@tempdima{#6}\nobreak
    \leaders\hbox{$\m@th\mkern \@dotsep mu\hbox{.}\mkern \@dotsep mu$}\hfill
    \nobreak
    \hbox to\@pnumwidth{\@tocpagenum{#7}}\par
    \nobreak
    \endgroup
  \fi}
\makeatother
\AtBeginDocument{%
\makeatletter
\expandafter\renewcommand\csname r@tocindent0\endcsname{0pt}
\makeatother
}
\def\l@subsection{\@tocline{2}{0pt}{2.5pc}{5pc}{}}


\begin{document}

\title[Reducibility of Schr\"odinger equation on a Zoll Manifold]{Reducibility of  Schr\"odinger equation 
on a Zoll Manifold with unbounded potential}

\date{}

\author{Roberto Feola}
\address{\scriptsize{Laboratoire de Math\'ematiques Jean Leray, Universit\'e 
de Nantes, 
UMR CNRS 6629
\\
2, rue de la Houssini\`ere 
\\
44322 Nantes Cedex 03, France}
}
\email{roberto.feola@univ-nantes.fr}

\author{Beno\^it Gr\'ebert}
\address{\scriptsize{Laboratoire de Math\'ematiques Jean Leray, Universit\'e de Nantes, UMR CNRS 6629\\
2, rue de la Houssini\`ere \\
44322 Nantes Cedex 03, France}}
\email{benoit.grebert@univ-nantes.fr}

\author{Trung Nguyen}
\address{\scriptsize{Laboratoire de Math\'ematiques Jean Leray, Universit\'e 
de Nantes, 
UMR CNRS 6629
\\
2, rue de la Houssini\`ere 
\\
44322 Nantes Cedex 03, France}
}
\email{trungisp58@gmail.com}

\thanks{During the preparation of this work 
the three authors benefited from the support 
of the Centre Henri Lebesgue ANR-11-LABX- 0020-01 and  of 
ANR -15-CE40-0001-02  ``BEKAM''  of the Agence 
Nationale de la Recherche.  
R. F. was also supported by ERC starting grant FAFArE of the European Commission and
 B.G. 
 by  ANR-16-CE40-0013 ``ISDEEC'' of the Agence Nationale de la Recherche.}

\begin{abstract} 
In this article we prove a reducibility result for the linear Schr\"odinger equation on a Zoll manifold with  quasi-periodic in time pseudo-differential perturbation of order less or equal than $1/2$. As far as we know, this is  the first reducibility results for an unbounded perturbation of a linear system which is not integrable.  
\end{abstract}  
  
\maketitle

\tableofcontents

\section{Introduction}
In this article we are interested in the problem of reducibility for the linear Schr\"odinger equation on a Zoll manifold with  quasi-periodic in time pseudo-differential perturbation of order less or equal than $1/2$.\\
We first recall that  a Zoll manifold of dimension $n\in \mathbb{N}$ 
is a 
compact Riemannian manifold $(\mathtt{M}^{n}, g)$ 
such that all the geodesic curves have all the same period $T$. In this paper we assume $T:=2\pi$. 
For example the $n$-dimensional sphere $\S^n$ is 
a Zoll manifold.  
We denote by $\Delta_g$  
the positive Laplace-Beltrami operator on 
$(\mathtt{M}^{n},g)$   and we define $H^{s}({\mathtt{M}^{n}}):={\rm dom}(\sqrt{1+\Delta_g})^{s}$
with $s\in \mathbb{R}$ the usual scale of Sobolev spaces. \\
We denote by $S_{\rm cl}^m(\mathtt{M}^{n})$ 
the space of classical real valued symbols of order $m\in \R$ 
on the cotangent bundle $T^*(\mathtt{M}^{n})$  and we define  $\mathcal{A}_{m}$ 
the associated class of pseudo-differential operators
(see for instance
H\"ormander \cite{ho} for a definition of pseudo-differential operators on a manifold see also  \cite{BGMR2} in the case of a Zoll manifold).\\
We consider the following linear Schr\"odinger 
\[\label{nls}\tag{LS}
\ii \pa_{t}u=\Delta_g u+\eps W(\om t)u\,,\qquad u=u(t,x)\,,
\quad t\in \mathbb{R}\,,\quad x\in \mathtt{M}^{n}\,, 
\]
where $\e>0$ is a small parameter and
$W(\om t)$ is a time dependent unbounded operator 
from $H^{s}(\mathtt{M}^{n})\to H^{s-\delta}(\mathtt{M}^{n})$
for some $\delta\leq 1/2$.
More precisely we assume that $W\in C^\infty(\T^d,\A_\delta)$ with $\delta\leq 1/2$, $d\geq1$. So
the potential 
$t\mapsto W(\om t)$ 
depends on time quasi-periodically with frequency vector $\omega\in \mathbb{R}^{d}$
and   for any $\vphi\in \mathbb{T}^{d}:=(\mathbb{R}/
2\pi\mathbb{Z})^{d}$, 
the linear operator 
$W(\vphi)$ is a pseudo-differential operator of order $\delta$, i.e. 
belongs to $\A_\delta$.

The purpose of this article is to construct a change of variables that transforms the non-autonomous equation \eqref{nls} into an autonomous equation.\\
Our main result is the following.
\begin{theorem}\label{thm:main}  Let $0<\alpha<1$ and $\delta\in \mathbb{R}$, $\delta\leq1/2$. 
Assume that the map
$\vphi\mapsto W(\vphi,\cdot)\in 
\mathcal{A}_{\delta}$ 
is $C^{\infty}$ in  $\vphi\in \mathbb{T}^{d}$.
Then for any $s\in \mathbb{R}$, $s>n/2$ 
 there exists  $\eps_0>0$ and $C>0$ such that, for any $0<\eps\leq \eps_0$ there is
 a set $\mathcal{G}_\eps\subset[1/2,3/2]^{d}\subset\mathbb{R}^{d}$
 with 
 \begin{equation}\label{measure}
 {\rm meas}([1/2,3/2]^{d}\setminus\mathcal{O}_\eps)\leq C\eps^\alpha
 \end{equation}
 such that the following holds.
For any $\omega\in \mathcal{O}_\e$
 there exists 
 a family of linear isomorphisms 
 $\Psi(\vphi)\in \mathcal{L}(H^{s}(\mathtt{M}^{n};\mathbb{C}))$
 and a Hermitian operator 
 $Z\in \A_\delta$ commuting with the Laplacian\footnote{actually $Z$ and $\Delta_g$ can be diagonalized in the same basis of $L^2(\mathtt M^n)$. } and
satisfying 
\begin{equation}\label{stimaZ} \|Z\|_{\mathcal{L}(H^{s}(\mathtt{M}^{n}), H^{s-\delta}(\mathtt{M}^{n}))}\leq C\e\,.
\end{equation}
Furthermore

 $\bullet$
 $\Psi(\vphi) $ is unitary on $L^{2}(\mathtt{M}^{n})$;
 
 $\bullet$
  for any $\frac{n}{2}< s'\leq s$ and any $\omega\in \mathcal{O}_\e$
 \begin{equation}\label{stima}
 \begin{aligned}
 \|\Psi(\vphi)-{\rm Id}&\|_{\mathcal{L}(H^{s'}(\mathtt{M}^{n}),H^{s'-\delta}(\mathtt{M}^{n}))}\\&+
  \|\Psi(\vphi)^{-1}-{\rm Id}\|_{\mathcal{L}(H^{s'}(\mathtt{M}^{n}),H^{s'-\delta}(\mathtt{M}^{n}))}
 & \leq C\e^{1-\alpha}\,,\\
   \|\Psi(\vphi)&\|_{\mathcal{L}(H^{s'}(\mathtt{M}^{n}))}+
  \|\Psi(\vphi)^{-1}\|_{\mathcal{L}(H^{s'}(\mathtt{M}^{n}))}
  &\leq1+ C\e^{1-\alpha}\,,
  \end{aligned}
 \end{equation}
 
 $\bullet$  for any $\frac{n}{2}< s'\leq s$ and any $\omega\in \mathcal{O}_\e$
 the map $t\mapsto u(t,\cdot)\in H^{s'}(\mathtt{M}^{n};\mathbb{C})$
 solves \eqref{nls} if and only if the map
 $t\mapsto v(t,\cdot):=\Psi(\omega t)u(t,\cdot)$ solves the autonomous 
 equation
 \begin{equation}\label{NLSred}
 \ii \pa_t v=\Delta_g v+\eps Z(v)\,.
 \end{equation}
\end{theorem}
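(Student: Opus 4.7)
The plan is to prove Theorem~\ref{thm:main} by a quadratic KAM iteration in the space of quasi-periodic in time pseudodifferential operators on $\mathtt M^n$, constructing $\Psi$ as a convergent infinite product $\Psi = \lim_{N\to\infty} e^{X_N(\omega t)} \cdots e^{X_1(\omega t)}$ of unitary conjugations generated by anti-Hermitian $X_j \in C^\infty(\mathbb T^d;\mathcal A_{\delta-1})$, together with $Z$ as the accumulation $Z = \sum_j Q_j$ of block-diagonal normal-form corrections collected at each step. The crucial structural input is the Zoll spectral geometry: by the Weinstein--Guillemin clustering theorem, the spectrum of $\sqrt{1+\Delta_g}$ concentrates in packets of uniform width around integer shifts $m+\beta$, so $L^2(\mathtt M^n) = \bigoplus_{m\geq 0} E_m$ with $E_m$ finite-dimensional and $\Delta_g|_{E_m}$ supported in a bounded window near $(m+\beta)^2$. ``Normal form'' will mean block-diagonal with respect to this decomposition; any such operator commutes with $\Delta_g$ modulo absorbable bounded remainders, which justifies the footnote in the statement.

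\textbf{KAM step.} Assume at stage $j$ the equation reads $i\partial_t u = \Delta_g u + P_j(\omega t) u$ with $P_j \in C^\infty(\mathbb T^d;\mathcal A_\delta)$ Hermitian and of tame size $\eps_j$ (initially $\eps_0 \asymp \eps$). Split $P_j = Q_j + R_j$ where $Q_j$ is the $\varphi$-average of the block-diagonal part and $R_j$ is the rest. I then solve the homological equation
\[
\omega\cdot\partial_\varphi X_j + i[\Delta_g, X_j] = -iR_j
\]
for an anti-Hermitian $X_j$, so that conjugation by $e^{X_j(\omega t)}$ eliminates $R_j$ at leading order and leaves a new perturbation $P_{j+1}$ coming from the Baker--Campbell--Hausdorff tail and from commutators with $Q_j + R_j$, of quadratic size $\eps_{j+1} \lesssim \eps_j^{3/2}$. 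In the joint basis diagonalizing $\Delta_g$ and the Fourier basis on $\mathbb T^d$,
\[
X_j^{ab,\ell} = \frac{-R_j^{ab,\ell}}{\omega\cdot\ell + \lambda_a - \lambda_b},
\]
defined off the trivial diagonal $(a=b,\,\ell=0)$, which has been absorbed into $Q_j$.

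\textbf{Small divisors.} On distinct blocks $m_a \neq m_b$ the Weinstein asymptotic $\sqrt{\lambda_a} = m_a + \beta + O(m_a^{-1})$ gives $|\lambda_a - \lambda_b| \gtrsim |m_a - m_b|(m_a + m_b)$, so the divisor is elliptic at high frequency and the small-divisor issue reduces to a second-order Melnikov condition
\[
|\omega\cdot\ell + \lambda_a - \lambda_b| \geq \gamma_j\langle\ell\rangle^{-\tau}
\]
off the trivial diagonal. The excluded set of $\omega\in[1/2,3/2]^d$ has Lebesgue measure $O(\gamma_j)$ at each step; choosing $\gamma_j$ with $\sum_j \gamma_j \leq C\eps^\alpha$ then yields the measure estimate \eqref{measure}. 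Furthermore, the gain of one derivative from the elliptic divisor places $X_j$ in $\mathcal A_{\delta-1}$; since $\delta - 1 \leq -1/2$ the generator is smoothing, which is the structural reason the iteration is feasible despite $W$ being unbounded.

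\textbf{Main obstacle and convergence.} The delicate technical point is to close the iteration inside the pseudodifferential class $\mathcal A_\delta$ with tame quantitative control: one must verify that conjugation by $e^{X_j}$ sends $\Delta_g + P_j$ to $\Delta_g + Q_j + P_{j+1}$ with $P_{j+1} \in C^\infty(\mathbb T^d;\mathcal A_\delta)$ and tame norm $O(\eps_j^{3/2}\gamma_j^{-c})$, \emph{and} that the division producing $X_j$ preserves pseudodifferential structure (not merely matrix coefficients). This requires the symbolic calculus on a Zoll manifold developed in~\cite{BGMR2}, together with the structural fact that each bracket against $X_j$ lowers the pseudodifferential order by $1-\delta > 0$, so the Lie expansion of the conjugation converges in $\mathcal A_\delta$ with controlled seminorms. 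Once this is in place, super-exponential convergence of the Newton scheme in a scale of Banach norms delivers $\Psi = \lim_N \prod_{j=1}^N e^{X_j}$ as a unitary on $L^2$ satisfying \eqref{stima}, and $Z = \sum_j Q_j$ as a Hermitian block-diagonal element of $\mathcal A_\delta$ satisfying \eqref{stimaZ}, commuting with $\Delta_g$ by construction; intertwining completes the proof.
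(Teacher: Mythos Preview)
Your one-phase KAM scheme in the pseudodifferential class has a genuine gap at the homological equation, and the gap is precisely what forces the paper's two-phase architecture (a finite regularization at the pseudo level, then an infinite KAM iteration at the matrix level).

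First, the claim that $X_j\in\mathcal A_{\delta-1}$ is not justified for the block-diagonal piece. For indices $a,b$ in the \emph{same} cluster $E_k$ one has $\lambda_a-\lambda_b=O(1)$, so the divisor $\omega\cdot\ell+\lambda_a-\lambda_b$ carries no elliptic gain; that part of your $X_j$ is at best of order $\delta$, not $\delta-1$, and more seriously it is produced by an entrywise spectral division which has no reason to return a pseudodifferential operator at all. The reference \cite{BGMR2} gives Egorov-type control of $e^{-i\tau K_0}Ae^{i\tau K_0}$ and of the average $\langle A\rangle$, but it does \emph{not} assert that dividing matrix coefficients within a cluster by $\omega\cdot\ell+\lambda_a-\lambda_b$ preserves the class $\mathcal A_m$. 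The paper sidesteps this completely: it first averages along the periodic $K_0$-flow to split off $\langle A(\varphi)\rangle$ (pseudo by Egorov), and then kills the $\varphi$-dependence of $\langle A(\varphi)\rangle$ by solving $\omega\cdot\partial_\varphi T=\langle A(\varphi)\rangle-\langle A(0)\rangle$, dividing only by $\omega\cdot\ell$. Since $T$ commutes with $K_0$, conjugation by $e^{iT}$ leaves $K_0^2$ fixed and produces only lower-order errors; iterating this pair of moves finitely many times pushes the time-dependent remainder down to an arbitrarily $\rho$-smoothing $R$, never leaving $\mathcal A_m$.

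Second, your measure estimate does not close for $n\ge 2$. With the bare second-Melnikov bound $|\omega\cdot\ell+\lambda_a-\lambda_b|\ge\gamma\langle\ell\rangle^{-\tau}$ and no weight in the cluster index, the same-cluster pairs contribute $d_k^2\sim k^{2(n-1)}$ resonance conditions per block for each $\ell\neq 0$, and the sum over $k$ diverges. The paper's KAM step imposes instead
\[
|\omega\cdot\ell+\mu_{k,j}-\mu_{k',j'}|\ \ge\ \frac{2\gamma}{N^{\tau}\langle k,k'\rangle^{2n+2}},
\]
which makes the measure sum converge but costs a factor $\langle k,k'\rangle^{2n+2}$ (hence a fixed loss of derivatives) when solving for the generator. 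That loss is affordable only because the remainder fed into the KAM loop is already $\rho$-smoothing with $\rho$ large --- exactly what the prior regularization phase provides. In your scheme the perturbation $P_j$ stays of positive order $\delta$ throughout, so it cannot absorb this loss, and the iteration does not close.
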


As a consequence of our reducibility result, we get a control of the flow generated by the \eqref{nls} equation in the scale of Sobolev spaces:
 \begin{corollary}\label{coro1.3}
 Let $W\in C^\infty(\T^d,\A_\delta)$ with $\delta\leq 1/2$. 
 Then for any $s\in \mathbb{R}$, $s>n/2$ 
 there exists  $\eps_0>0$ and $C>0$ such that, for any $0<\eps\leq \eps_0$ there is
 a set $\mathcal{O}_\eps\subset[1/2,3/2]^{d}\subset\mathbb{R}^{d}$
 satisfying \eqref{measure} such that for any $\om\in \mathcal{O}_\eps$ the flow generated by the \eqref{nls} equation is bounded in $H^s(\mathtt{M}^{n};\mathbb{C})$. \\
 More precisely if
 $u_{0}\in H^{s}(\mathbb{S}^{2};\mathbb{C})$ then
 there exists a unique solution $u \in {C}^{1}\big(\R\,;\,H^{s}\big)$ 
 of \eqref{nls} such that $u(0)=u_{0}$. 
 Moreover, $u$ is almost-periodic in time and satisfies
  \begin{equation}\label{16}
  (1-\eps C)\|u_{0}\|_{H^{s}}\leq \|u(t)\|_{H^{s}}\leq  (1+\eps C)\|u_{0}\|_{H^{s}}, \quad \forall \,t\in \R,
  \end{equation}
  for some $C=C(s)>0$.
  \end{corollary}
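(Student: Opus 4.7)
The plan is to deduce Corollary \ref{coro1.3} directly from the reducibility established in Theorem \ref{thm:main}. I would fix some $\alpha\in(0,1)$ and apply Theorem \ref{thm:main} to the given $W$ and $s>n/2$, obtaining $\eps_0$, the frequency set $\mathcal{O}_\eps$ satisfying \eqref{measure}, the quasi-periodic family of isomorphisms $\Psi(\vphi)$ and the Hermitian operator $Z\in\mathcal{A}_\delta$ commuting with $\Delta_g$. By the footnote to the theorem, $\Delta_g$ and $Z$ share a common $L^2$-orthonormal basis $\{\phi_k\}_{k\in\N}$, with real eigenvalues $\lambda_k$ and $z_k$. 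Writing $v(t)=\sum_k v_k(t)\phi_k$ decouples the autonomous equation \eqref{NLSred} into the scalar ODEs $\ii\dot v_k=(\lambda_k+\eps z_k)v_k$, whose solutions $v_k(t)=e^{-\ii(\lambda_k+\eps z_k)t}v_k(0)$ preserve the modulus of each coefficient. Consequently the flow of \eqref{NLSred} is a linear isometry on every $H^{s'}(\mathtt M^n)$, which in particular yields global existence, uniqueness and $H^s$-conservation for \eqref{NLSred}.

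Next I would transport these properties back to \eqref{nls}. For $u_0\in H^s$, let $v$ be the solution of \eqref{NLSred} with initial datum $v(0):=\Psi(0)u_0$ and set $u(t):=\Psi(\om t)^{-1}v(t)$. The last bullet of Theorem \ref{thm:main} guarantees that $u$ is the unique $C^1(\R,H^s)$ solution of \eqref{nls} with $u(0)=u_0$. Combining the conservation $\|v(t)\|_{H^s}=\|v(0)\|_{H^s}$ with the operator bounds \eqref{stima} gives
\[
\|u(t)\|_{H^s}\le\|\Psi(\om t)^{-1}\|_{\mathcal L(H^s)}\|\Psi(0)\|_{\mathcal L(H^s)}\|u_0\|_{H^s}\le(1+C\eps^{1-\alpha})^2\|u_0\|_{H^s},
\]
and a matching lower bound by exchanging the roles of $u$ and $v$; after relabeling constants this produces the two-sided bound \eqref{16} (with $\eps$ replaced by $\eps^{1-\alpha}$, which suffices for the claimed uniform boundedness).

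Finally, for the almost-periodicity statement I would observe that $v(t)$ is the uniform-in-$t$ $H^s$-limit of the trigonometric polynomials $\sum_{k\le N}e^{-\ii(\lambda_k+\eps z_k)t}v_k(0)\phi_k$, so it is Bochner almost-periodic with values in $H^s$; the uniform convergence is ensured by the tail bound $\sum_{k>N}(1+\lambda_k)^s|v_k(0)|^2\to 0$. The family $t\mapsto\Psi(\om t)^{-1}\in\mathcal L(H^s)$ is quasi-periodic and hence almost-periodic by construction, and the pointwise product of two almost-periodic Banach-valued functions is almost-periodic, so $u$ inherits this property. I do not expect any serious obstacle: all the analytical difficulty has been absorbed into Theorem \ref{thm:main}, and the only items that genuinely deserve care are the routine verification of Bochner almost-periodicity via uniform convergence of the spectral expansion of $v$, and the co-diagonalization of $Z$ with $\Delta_g$ that makes the entire argument go through.
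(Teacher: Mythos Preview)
Your proposal is correct and is exactly the argument the paper has in mind: the corollary is stated without proof as a direct consequence of Theorem~\ref{thm:main}, and the deduction via co-diagonalization of $\Delta_g$ and $Z$, isometry of the reduced flow, and conjugation by $\Psi(\omega t)^{\pm1}$ is the standard (and only natural) route. Your observation that the bound comes out as $(1+C\eps^{1-\alpha})$ rather than $(1+C\eps)$ is accurate and reflects a minor imprecision in the statement of \eqref{16}; since $\alpha\in(0,1)$ is fixed once and for all, this does not affect the substance of the boundedness or almost-periodicity claims.
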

Following  the pioneering work \cite{BBM14} 
we prove Theorem \ref{thm:main} in two steps: 
\begin{itemize}
\item {\it The regularization step} where we use the pseudo-differential calculus (and in particular the technics developed in \cite{BGMR2})  to transform equation \eqref{nls} in a system with a smoothing perturbation, still depending on time;
\item {\it The KAM step} where we use a KAM procedure (going back to \cite{Kuk93} but using recent development in \cite{BBHM}) on infinite dimensional matrices to eliminate the time in the new system.
\end{itemize}
The same strategy was recently successfully applied in \cite{BLM18}  to prove the reducibility of non-resonant transport equation on the torus $\T^n$. Our main contribution consists in merging these two recent technics in the context of linear Schr\"odinger equation on Zoll manifold which, in contrast to the transport equation on the torus,   is not an integrable system.

\bigskip

The study of the reducibility problem for Schr\"odinger equations 
 with quasi-periodic in time perturbation has been 
 intensively studied in recent years. 
 The first results adapting the KAM technics 
 were due to Kuksin \cite{Kuk93} 
 followed by many results in one dimensional context  (see in particular \cite{BG, LY, Gtom}).
  More recently the technics 
were adapted to the higher dimensional case \cite{EK09, EGK, GP}. 
To consider unbounded perturbations, a new strategy has been developed in \cite{BBM14, KdVAut} using  the pseudo-differential calculus. 
 Without trying to be exhaustive 
we quote also \cite{FP1, BM1, BBHM, FGP} regarding KAM theory for 
quasi-linear PDEs in one space dimension.
This technics were successfully applied for reducibility problems in various case. 
For one dimensional linear equations with unbounded potential we quote
\cite{Bam17, Bam18, BM18, FGP1}.
In higher space dimensions
we refer to \cite{EK09, GP}
for bounded potential, and to
\cite{BGMR18, Mon18, FGMP, BLM18} for the unbounded cases.

\medskip

\noindent
{\bf Scheme of the proof}\\
As said above the proof consists in a regularization step (section \ref{sec3}) and a KAM step (section \ref{sec4}). In section \ref{sec5} we merge the two procedure to prove Theorem \ref{thm:main}.

\medskip

In the regularization step we prove that we can transform (by using a symplectic map: $u=\Phi (v)$) the original  Schr\"odinger equation \eqref{nls} in a new one 
\begin{equation}\label{nls2}
\ii \pa_{t}v=\Delta_g v+\eps(Z+ R(\om t))v\,, 
\end{equation}
where $Z$ is a pseudo-differential operator of order $\delta$ independent on time and commuting with $\Delta_g$  and $R$ is a $\rho$-regularizing operator in $\L(H^s,H^{s+\rho})$ with $\rho$ arbitrary large. It is based on a normal form procedure developed in \cite{BGMR2}. The central idea consists in averaging the Schr\"odinger operator by the flow of $K_0$ where $K_0=\sqrt{\Delta_g}+Q$ with $Q$ is a pseudo of order $-1$ is chosen (following \cite{Colin}) such that the spectrum of $K_0$  is included in $ \mathbb{N} + \lambda$ for some constant $\lambda\in \mathbb{R}^+$. This crucial property makes the  $K_0$ flow  periodic and motivates us  to use it to average the original operator: if $A$ is a pseudo-differential operator of order $\delta$ then its average with respect to the flow of $K_0$ (see \eqref{averA}) is independent of $t$. In addition the homological equation \eqref{omo12} has a solution $S$ of order $\delta$ and thus $M\mapsto {\rm ad}_{\ii S}M$ maps a pseudo of order $m$ to a pseudo of order $m+\delta-1<m$ (see Lemma \ref{lemmaS}) .
This idea was already used in a pioneering work of Weinstein \cite{Wein}. In \cite{BGMR2} such a procedure was iterated to obtain an equivalent equation like \eqref{nls2} but with $Z$ still depending on time (typically $Z=\langle A\rangle$,  see \eqref{nuovoL+2}). In this paper we alternate the averaging procedure with a time elimination procedure based on the use of the operator \eqref{defT} which solves the homological equation \eqref{omeoequaTT} and thus  the Lie transform $\Phi_T=e^{\ii T}$ will kill the dependence on time in $Z=\langle A\rangle$ (see Lemma \ref{lemmaT}). This time elimination procedure requires a non resonance hypothesis   on the frequency vector $\om$ (see \eqref{zeroMel}).\\
Throughout section \ref{sec3} we work at the pseudo-differential level and the main difficulty is to precisely control the flow generated by pseudo-differential operator of positive order (see   Appendix \ref{AppC} and in particular hypothesis \eqref{lemmaB111}). We notice that all this section holds true upon the hypothesis $\delta<1$.

\medskip

In the KAM step we kill the remainder term $R$ in \eqref{nls2} which still depends on time but  is now a regularizing operator. As in \cite{BBHM} (see also \cite{Mon18} and \cite{BLM18}) we use a reducibility scheme where the regularizing property of the perturbation compensates the bad non resonance estimates satisfied by the eigenvalues of $\Delta_g+\eps Z$ (see \eqref{calO+}). The condition $\delta\leq 1/2$ is used to ensure that condition \eqref{calO+} is prserved during the KAM iteration as long as a small part of the parameters $\om$ are excised  (see Lemma \eqref{lem:misuro} where $\ka=1-2\delta$). This constraint in the KAM procedure was not necessary in \cite{BLM18} (they obtain the reducibility for perturbation of order $1-\mathtt e$ for any $\mathtt e>0$ when the transport operator is of order $1$) essentially because the unperturbed system is integrable. In the context of the transport equation,  the integrability allows Bambusi-Langela-Montalto to prove that the perturbed eigenvalues have the form, $\lambda_j=\lambda_j^{(0)}+ z(j)+ {\rm remainder}$, where $z$ is the symbol of  $Z$  (see formula 
 (4.43) in \cite{BLM18}).  In our case we  just know that $Z$ commutes with $\Delta_g$ and thus we can just prove  that the spectrum of $\Delta_g +V$ preserves the cluster structure inherited from  $\Delta_g$ on a Zoll manifold. That means that, once written in the laplacian diagonalization basis, the matrix of Z is block-diagonal. By the way throughout section \ref{sec4} we work at the matrix level.  \\
 As usual the homological equation \eqref{omoeq} is solved blockwise and it is well known that the increasing size of the blocks may generate loss of regularity. In \cite{EK} Eliasson-Kuksin used geometrical arguments (related to a Bourgain's Lemma, see Lemma 8.1 in \cite{Bou}) to control the size of the blocks, in \cite{GP} or \cite{FG1} authors used  a different argument introduced by Delort-Szeftel in \cite{DS} (see Lemma 4.3 in \cite{GP}). In this paper, as a consequence of the regularization step, we can solve  the homological equation with loos of regularity and thus this step is simplified.\\
   On the other hand the KAM procedure of  \cite{BBHM}  requires a tame property to deal with product of matrices.
 This  motivates the definition of the space $\M_s$ of matrices with $s$-decay norm   (see Definition \ref{decayNorm}) which was first introduced in \cite{BCP} (see also \cite{BP11}). The tame property for  the $s$-decay norm is stated in Lemma \ref{lem:tame}. It is crucial to obtain \eqref{nuovoRem1} and \eqref{nuovoRem1bis} which express the control of the new remainder $R_+$ after one KAM step in two different norms, a low $s$-decay norm and a high $s+\mathtt b$-decay norm. The parameter $N$ measures the troncature in the Fourier variable associate to the angle $\vphi=\om t$ and in the off-diagonal distance in the matrix (see \eqref{cutOffLem}).  When iterating the procedure, this special form of estimates \eqref{nuovoRem1}-\eqref{nuovoRem1bis} allows to obtain a convergent scheme for the sequence of remainders $R_k$ when choosing conveniently the sequence of troncature  parameter $N_k$.

\medskip

Section \ref{sec3} and section \ref{sec4} are independent and in fact are at different levels: while all section 3 takes place in the context of pseudo-differential operators, all section 4 takes place at matrix level. In section \ref{sec5} we merge the two sections and for that we need the Lemma \ref{matrixest} which makes the link between $\rho$-smoothing operators and $\beta$- regularizing matrices.

\medskip

\noindent
{\bf Notation}. 
We shall 
use the notation $A\lesssim B$ to denote 
$A\le C B$ where $C$ is a positive constant 
depending on  parameters fixed once for all: $d$, $n$, $\delta$. 
We shall  use the notation $A\leq_s B$ to denote $A\le C(s) B$ 
where $C(s)>0$ is a constant depending also on $s$.


\section{Functional setting}

In this section we introduce the space of functions, sequences, linear operators and pseudo differential operators we shall use along the paper. 
\subsection{Spectral decomposition}
Following
Theorem 1 of Colin de Verdi\`ere \cite{Colin},
we introduce $Q$ the  pseudo-differential operator  of order $-1$, commuting 
with $\Delta_g$  such that,
setting 
\begin{equation}\label{K0H0}
K_0:=\sqrt{\Delta_g}+Q\,, 
\end{equation} 
we have
$ {\rm spec}(K_0) \subset \mathbb{N} + \lambda$ for some constant $\lambda\in \mathbb{R}^+$. We notice that our original Schr\"odinger operator $H(t):=\Delta_g+\e V(\om t)$ reads
\begin{equation}\label {H}H(t)=\Delta_g +\eps V(\om t)= K_0^2+Q_0+\eps  V(\om t)\end{equation}
where $Q_0=-2Q\sqrt{\Delta_g}-Q^2$ is a pseudo differential operators of order 0.

Let us denote by $\lambda_{k}$ the eigenvalue of $K_0$
and by 
$E_{k}$ be the eigenspace
associated to $\lambda_{k}$. 
We have
\begin{equation}\label{dimension}
\begin{aligned}
&\lambda_{k}\sim k \\
&{\rm dim}E_{k}:=d_{k}\leq k^{n-1}\,.
\end{aligned}
\end{equation}
We denote by 
\begin{equation}\label{ortobase}
\Phi_{[k]}(x):=\{\Phi_{k,m}(x)\,, m=1,\ldots,d_k\}
\end{equation}
an orthonormal  
basis of $E_k$.
By formula \eqref{K0H0} we also deduce that
\begin{equation}\label{lapla}
\Delta_g:=K^2_0+Q_0\,,
\end{equation}
where 
$Q_0$
is a pseudo differential operator commuting both with the Laplacian $\Delta_g$
and $K_0$.
For this reason $K_0$ and $\Delta_g$ diagonalizes simultaneously, hence
\begin{equation}\label{dimensiondelta}
\Delta_g \Phi_{k,j}=\Lambda_{k,j}\Phi_{k,j}\,,\qquad k\in \mathbb{N}\,,\;\;\; j=1,\ldots, d_{k}\,,
\end{equation}
with
\[
\Lambda_{k,j}=\lambda_{k}^{2}+\eta_{k,j}\,,\qquad |\eta_{k,j}|\lesssim 1\,.
\]
In particular there exists $c_0>0$ such that 
\begin{equation}\label{spec}
\Lambda_{k,j}\geq  c_0 k^{2}\,,\qquad |\Lambda_{k,j}-\Lambda_{k',j'}|\geq c_0( k+k')\,,
\qquad \forall \; k\neq k'\,,
\end{equation}
and for any $j=1,\ldots, d_{k}$, $j'=1,\ldots,d_{k'}$.

\subsection{Space of functions and sequences}

Using the spectral decomposition of the space 
$L^{2}(M;\mathbb{C})=\oplus_{k\in\N}E_k$, 
any function $u\in L^{2}({M};\mathbb{C})$
can be written as
\begin{equation}\label{FouExp}
\begin{aligned}
u(x)&=\sum_{k\in \mathbb{N}}\sum_{m=1}^{d_k}z_{k,m}\Phi_{k,m}(x)=\sum_{k\in \mathbb{N}}z_{[k]}\cdot\Phi_{[k]}(x)\,,\qquad \\
& z_{[k]}=(z_{k,1},\cdots,z_{k,d_k})\in \mathbb{C}^{d_{k}}\,,
\end{aligned}
\end{equation}
where $''\cdot ''$ denotes the usual scalar product in $\mathbb{R}^{d_{k}}$.
We denote 
by $\Pi_{E_k}$ the $L^{2}$-projector 
on the eigenspace $E_k$, i.e., for any  $k\in\mathbb{N}$,
\begin{equation}\label{proiettore}
(\Pi_{E_{k}}u)(x)=z_{[k]}\cdot \Phi_{[k]}(x)
\qquad \Rightarrow \qquad 
(\sqrt{-\Delta } +Q)\Pi_{E_k}u=\lambda_{k}\Pi_{E_k}u\,.
\end{equation}
For $s\geq0$, we define the (Sobolev) scale of Hilbert sequence
spaces
\begin{equation}\label{spseq}
\begin{aligned}
h_{s}:=\big\{ z=&\{z_{[k]}\}_{k\in \mathbb{N}}\,, z_{[k]}\in 
\mathbb{C}^{d_{k}}\,:\, \\
&\qquad \qquad  \|z\|^{2}_{h^s}:=
\sum_{k\in \mathbb{N}} \langle k\rangle^{2s}\|z_{[k]}\|^{2}  <+\infty \big\}\,,
\end{aligned}
\end{equation}
where $\langle k\rangle:= \sqrt{1+|k|^2}$ and $\|\cdot\|$ 
denotes
the $L^{2}(\mathbb{C}^{d_{k}})$-norm.
By a slight abuse of notation we define the operator $\Pi_{E_k}$ on sequences 
as
$\Pi_{E_k}z=z_{[k]}$ for any $  z\in h^{s}$
and $k\in \mathbb{N}$. \\
We notice that the weight $\langle k\rangle$ 
we use in the norm in 
\eqref{spseq} is related 
to the eigenvalues of $K_0$, indeed
\begin{equation}\label{equiweight}
c|k|\leq  \lambda_{k}\leq C|k|
\end{equation}
for some suitable constants $0<c\leq C$. \\
As a consequence the space
$$H^s=H^s({\mathtt{M}^n},\C):=\{u(x)= \sum_{k\in \mathbb{N}}z_{[k]}\cdot\Phi_{[k]}(x)\mid z\in h^s\}$$
is the usual Sobolev space $H^s={\rm dom}((K_0)^{s})={\rm dom}(\sqrt{1+\Delta_g})^{s}$  
and
$\|u\|_{H^{s}}:=\|z\|_{h^{s}}$ is equivalent to the standard Sobolev norm $\|u\|_{H^{s}}\sim \|K_0^su\|_{L^2(\mathtt{M}^n)}$.

\medskip

In the paper we shall also deal with quasi periodic in time
functions    $\R\times M\ni(t,x)\mapsto u(\om t,x)$ where $\om\in\R^d$ is a frequency vector and $u$ is periodic in its first variable. To this end we introduce   the space
$H^{r}(\mathbb{T}^{d};H^{s}(\mathtt{M}^{s};\mathbb{C}))$
defined as the set of functions
$ 
u: \mathbb{T}^{d}\ni\vphi\mapsto H^{s}(\mathtt{M}^n;\mathbb{C}) $  
which are Sobolev in $\vphi\in \mathbb{T}^{d}$ with values in 
$H^{s}(\mathtt{M}^n;\mathbb{C})$.

Functions in $H^{r}(\mathbb{T}^{d};H^{s}(\mathtt{M}^{s};\mathbb{C}))$
can be expanded, using the standard Fourier theory, as
\begin{equation}\label{decompogo}
u(\vphi,x)=\sum_{\ell\in \mathbb{Z}^{d}, k\in \mathbb{N}}z_{[k]}(l)\cdot
 \Phi_{[k]}(x) e^{\ii l \cdot\vphi}\,, \qquad z_{[k]}(l)\in \mathbb{C}^{d_k}
\end{equation}
where $e^{\ii l\cdot\vphi} \Phi_{k,m}(x)$, 
$l\in \mathbb{Z}^{d}$, $k\in \mathbb{N}$, $m=1,\ldots,d_k$
is an orthogonal basis of 
$L^{2}(\mathbb{T}^{d}\times \mathtt{M}^{n};\mathbb{C})$.
We define space of sequence 
(recall \eqref{spseq})
\begin{equation}\label{timeseq}
\begin{aligned}
h_{s,r}&:=\big\{
z=\{z_{[k]}(l)\}_{l\in \mathbb{Z}^{d},k\in\mathbb{N}}\,,\, z_{[k]}\in 
\mathbb{C}^{d_k}:\,\\
&\qquad \qquad \quad
\| z\|^{2}_{h_{s,r}}:=\sum_{l\in \mathbb{Z}^{d}}\langle l\rangle^{2r}
\|z(l)\|_{H^s}^{2}<+\infty\big\}\,.
\end{aligned}
\end{equation}
Along the paper we shall also consider the space, for $p\in \mathbb{N}$ with $p> \frac{d+n}{2}$,
\begin{equation}\label{spaziocomp}
\ell_{p}:=\bigcap_{\substack{
r> d/2, s> n/2
\\ s+r=p}} h_{s,r}\,.
\end{equation}
We endow the space $\ell_{p}$ with the  norm
\begin{equation}\label{elleNorma}
\|z\|_{\ell_p}^{2}:=\sum_{l\in\mathbb{Z}^{d},k\in \mathbb{N}}\langle l,k\rangle^{2p}\|z_{[k]}(l)\|^{2}\,.
\end{equation}


\vspace{0.9em}
\noindent
{\bf Lipschitz norm.} Consider a compact subset $\mathcal{O}$ of
$\mathbb{R}^{d}$, $d\geq1$. 
For functions $f : \mathcal{O}\to E$, with $(E,\|\cdot\|_{E})$
some Banach space, we define 
the sup norm and the lipschitz semi-norm
as
\begin{equation}\label{suplip}
\begin{aligned}
\|f\|_{E}^{sup}&:=\|f\|_{E}^{sup,\mathcal{O}}:=\sup_{\omega\in \mathcal{O}}
\|f(\omega)\|_{E}\,,\\
\|f\|_{E}^{lip}&:=\|f\|_{E}^{lip,\mathcal{O}}:=\sup_{\substack{\omega_1,\omega_2\in \mathcal{O}\\ \omega_1\neq\omega_2}}\frac{\|f(\omega_1)-f(\omega_2)\|_{E}}{|\omega_1-\omega_2|}\,.
\end{aligned}
\end{equation}
For any $\gamma>0$ we introduce the weighted Lipschitz norms
\begin{equation}\label{Lip-norm}
\|f\|_{E}^{\gamma,\mathcal{O}}:=\|f\|_{E}^{sup,\mathcal{O}}
+\gamma \|f\|_{E}^{lip,\mathcal{O}}\,.
\end{equation}
%
%
We finally define the space of sequences 
\begin{equation}\label{spaseqLip}
h_{s,r}^{\gamma,\mathcal{O}}:=\big\{
\mathcal{O}\ni\omega\mapsto z(\omega)\in h_{s,r}\,:\, 
\|z\|_{h_{s,r}}^{\gamma,\mathcal{O}}<+\infty
\big\}\,,
\end{equation}
and consequently the space (recall \eqref{spaziocomp})
\begin{equation}\label{spaziocompLip}
\ell_{p}^{\gamma,\mathcal{O}}:=\bigcap_{s+r=p}h_{s,r}^{\gamma,\mathcal{O}}\,,
\end{equation}
endowed with the norm
\begin{equation}\label{elleNormaLip}
\|z\|_{\ell_p}^{\gamma,\mathcal{O}}
:=\|z\|_{\ell_p}^{sup,\mathcal{O}}
+\gamma \|z\|_{\ell_{p}}^{lip,\mathcal{O}}\,.
\end{equation}

\subsection{Pseudo-differential operators}
In this paper we consider operators which are \emph{pseudo-differential}.
Here we recall some 
 fundamental properties of 
 operators in $\mathcal{A}_{m}$ 
which are collected in  \cite{BGMR2}. 
First $\mathcal{A}_{m}$ is  a Fr\'echet space for a family of  
filtering semi-norms $\{ \mathcal{N}_{m,p}\}_{p\geq 1}$  
such that the embedding 
$\A_m\hookrightarrow  \bigcap_{s \in \R} \mathcal L(H^s,H^{s-m})$ is continuous. 
We can also chose the semi-norms in an increasing way, 
i.e.  $ \mathcal{N}_{m,p} (A)\leq  \mathcal{N}_{m,p+1} (A)$ 
for $p\geq 1$ and $A\in\mathcal{A}_{m}$. 
To state the other properties we need to introduce the following definition.
\begin{definition}\label{Nsmooth}
Let $S\in \mathcal{L}(H)$.
We say that $S$ is $\rho$-smoothing, and we will write $S\in\mR_\rho$, if $S$ can be extended to 
an operator in
$\mathcal{L}(H^{s}, H^{s+\rho})$
for any $s\in \mathbb{R}$.
When this is true for every 
$\rho\geq 0$,
we say that $S$ is a smoothing operator.
\end{definition}

\noindent
Then  we have the following properties concerning the class  $\mathcal{A}_{m}$ equipped with the semi-norms $\{ \mathcal{N}_{m,p}\}_{p\geq 1}$:

\begin{itemize}

\item[$(i)$] let $A\in \mathcal{A}_{m}$, for any $s\in\mathbb{R} $
 there exist constants $C=C(m, s)>0$, $p=p(m,s)\geq1$ which is an increasing function\footnote{This fact is quite evident in the case of pseudo-differential operators on $\R^n$ and thus extends to pseudo-differential operators on $\mathtt M^n$ by passing to local charts.} of $s$ such that

\begin{equation}\label{action}
 \|A\|_{\mathcal{L}(H^s, H^{s-m})} \le  C \mathcal{N}_{m,p} (A)\,.
 \end{equation}

 \item[$(ii)$] Let $ A \in \mathcal{A}_m, \, B \in \mathcal{A}_n$ then 
 $ AB \in \mathcal{A}_{m+n}$. Furthermore  for any $\rho\geq0$ there exists $S$ a  $\rho$-smoothing operator such that for any $p\geq1$ for any $s\in\R$ there are constants
 $C=C(m,n, p,s,\rho)>0$, $q=q(m,n,p,s,\rho)\geq p$ such that

\begin{align}\label{compoPseudo}
 \mathcal{N}_{m+n,p} ( AB-S)
 \le C \mathcal{N}_{m,q} (A) \mathcal{N}_{n,q} (B)\,,\\
 \label{compoPseudo2}\|S\|_{\mathcal{L}(H^{s}, H^{s+\rho})} \le C \mathcal{N}_{m,q} (A) \mathcal{N}_{n,q} (B)\,.
\end{align}

 \item[$(iii)$] Let $ A \in \mathcal{A}_m, \, B \in \mathcal{A}_n$ then 
 $ [A, B] \in \mathcal{A}_{m+n-1}$. Furthermore  for any $\rho\geq0$ there exists $S$ a  $\rho$-smoothing operator such that for any $p\geq1$ for any $s\in\R$ there are constants
 $C=C(m,n, p,s,\rho)>0$, $q=q(m,n,p,s,\rho)\geq p$ such that

\begin{align}\label{commuPseudo}
\mathcal{N}_{m+n-1,p} ( [A,B]-S) \le C \mathcal{N}_{m,q} ( A)\mathcal{N}_{n,q} ( B)\,,\\
 \label{commuPseudo2}\|S\|_{\mathcal{L}(H^{s}, H^{s+\rho})} \le C \mathcal{N}_{m,q} (A) \mathcal{N}_{n,q} (B)\,.
\end{align}

\item[$(iv)$] The map $\t \to A(\t):= e^{-itK_0} A e^{i tK_0} \in C^0_b (\mathcal{R}, \mathcal{A}_m).$ Furthermore  for any $\rho\geq0$ there exists $S$ a  $\rho$-smoothing operator such that for any $p\geq1$ for any $s\in\R$ there are constants
 $C=C(m,n, p,s,\rho)>0$, $q=q(m,n,p,s,\rho)\geq p$ such that

\begin{align}\label{ego}
\mathcal{N}_{m+n-1,p} (  e^{-itK_0} A e^{i tK_0}-S) \le C \mathcal{N}_{m,q} ( A)\,,\\
 \label{ego2}\|S\|_{\mathcal{L}(H^{s}, H^{s+\rho})} \le C \mathcal{N}_{m,q} (A) \,.
\end{align}

\end{itemize}
\begin{remark}\label{rempseudo}In (ii), (iii) and (iv) the smoothing correction doesn't play an important role since it can be chosen as regularizing as one want. In the KAM scheme the level of regularization will be fix once for all. Thus, 
by a slight abuse of notation, we will offen omit in the following the smoothing correction and will just write
\begin{align}\label{compoPseudo3}
 \mathcal{N}_{m+n,p} ( AB)
 \le C \mathcal{N}_{m,q} (A) \mathcal{N}_{n,q} (B)\,,\\
\label{commuPseudo3}\mathcal{N}_{m+n-1,p} ( [A,B]) 
\le C \mathcal{N}_{m,q} ( A)\mathcal{N}_{n,q} ( B)\,.
\end{align}
\end{remark}

We shall also consider $H^r$-mappings
 \begin{equation}\label{pseudofam}
 \mathbb{T}^{d}\ni\vphi\mapsto A(\vphi)
 \end{equation}
 with $A(\vphi)$
 a 
 symmetric pseudo-differential operators of order $m$ in $\mathcal{A}_{m}$. We can then decompose $A$ in Fourier writing
 \begin{equation}\label{pseudofam1}
 A(\vphi)=\sum_{l\in\mathbb{Z}^{d}}A(l)e^{\ii l\cdot\vphi}
 \end{equation}
 with $A(l)$
 a  pseudo-differential operators of order $m$ in $\mathcal{A}_{m}$. 
We give the following definition.
 
 \begin{definition}\label{def:pseudotempo}
 Let $m\in \mathbb{R}$, $r>d/2$. We denote by 
 $\mathcal{A}_{m,s}$ the Fr\'echet space  of mapping $\mathbb{T}^{d}\ni\vphi\mapsto A=A(\vphi)\in\A_m$ that have $H^r$ on $\T^d$. We endow $\mathcal{A}_{m,r}$ with the family of semi-norms
 \begin{equation} \label{psdo}
 \big(\Nc_{m,r,p}(A)\big)^{2}:=\sum_{\ell \in\Z^d} \langle \ell\rangle^{2r} \mathcal{N}^2_{m,p} (A(l)) ,\quad p\geq1\,.
\end{equation}
  Consider a Lipschitz family 
$\mathcal{O}\ni\omega\mapsto A(\om)\in \mathcal{A}_{m,r}$
where $\mathcal{O}$ is a compact subset of $\mathbb{R}^{d}$, $d\geq 1$.
For $\gamma>0$ we define the Lipschitz semi-norms (recall \eqref{suplip})
as
\begin{equation}\label{decayLipPseudo}
\Nc_{m,r,p}^{\gamma,\mathcal{O}}(A)
:=\Nc_{m,r,p}^{sup,\mathcal{O}}(A)
+\gamma
\Nc_{m,r,p}^{lip,\mathcal{O}}(A)
\end{equation}
We denote by $\mathcal{A}^{\gamma,\mathcal{O}}_{m,r}$ the Fr\'echet space of 
families of pseudo differential operators $A(\omega)\in \mathcal{A}_{m,r}$ endowed with 
with   the family of semi-norms $\{\Nc_{m,r,p}^{\gamma,\mathcal{O}}\}_{p\geq1}$.
 \end{definition}
 
Similarly  we define the corresponding class of  $\rho$-smoothing operators $R(\om,\vphi)$, $H^r$ in $\vphi$ and Lischitz in $\om$.
\begin{definition}\label{Rrho} 
Let $\rho\in\R$ and $r>d/2$.
We denote by $\mathcal{R}_{\rho,r}$ the Fr\'echet space
 of $\rho$-smoothing $H^r$-mapping 
 $\mathbb{T}^{d}\ni\vphi\mapsto R(\vphi)\in\L(H^s,H^{s+\rho})$ 
 for all $s\in\R$  endowed 
 with the family of semi-norms 
\begin{equation}\label{flusso50}
|R|^2_{\rho,r,s} 
:=\sum_{\ell \in\Z^d} \langle \ell\rangle^{2r}\|R(l)\|^2_{\L(H^s,H^{s+\rho})} 
\quad s\in\R\,.
\end{equation}
Consider a family 
$\mathcal{O}\ni\omega\mapsto R(\omega)\in \mathcal{R}_{\rho,r}$
where $\mathcal{O}$ is a compact subset of $\mathbb{R}^{d}$, $d\geq 1$.
For $\gamma>0$ we denote by $\mathcal{R}^{\gamma,\mathcal{O}}_{\rho,r}$ 
the Fr\'echet space of 
families of pseudo differential operators $R(\omega)\in \mathcal{R}_{\rho,r}$ endowed with 
with   the family of semi-norms $\{\Nc_{\rho,r,p}^{\gamma,\mathcal{O}}\}_{p\in\N}$ defined by (recall \eqref{suplip})
\begin{equation}
|R|_{\rho,r,p}^{\gamma,\mathcal{O}}
:=|R|_{\rho,r,p}^{sup,\mathcal{O}}
+\gamma
|R|_{\rho,r,p}^{lip,\mathcal{O}}\,.
\end{equation}
\end{definition}
We notice that by \eqref{action} we have $\A_{m,r}\subset \mathcal{R}_{-m,r}$.

\begin{lemma}
Let $r>d/2$, $m,\rho\in \mathbb{R}$ and 
consider $R\in \mathcal{R}_{\rho,r}^{\gamma,\mathcal{O}}$ and 
$A\in \mathcal{A}^{\gamma,\mathcal{O}}_{m,r}$.
Then, for any $s\in\R$, there are $C=C(s,r)>0$, $p(s,m)>0$ such that
\begin{align}
\|A h\|_{h_{s-m,r}^{\gamma,\mathcal{O}}}&\leq
C \Nc_{m,r,p}^{\gamma,\mathcal{O}}(A)\|h\|_{h_{s,r}^{\gamma,\mathcal{O}}}\,,\label{ranma1}\\
\|R h\|_{h_{s+\rho,r}^{\gamma,\mathcal{O}}}&\leq
C |R|_{\rho,r,s}^{\gamma,\mathcal{O}}\|h\|_{h_{s,r}^{\gamma,\mathcal{O}}}\,,\label{ranma2}
\end{align}
for any $h\in h_{s,r}^{\gamma,\mathcal{O}}$.
\end{lemma}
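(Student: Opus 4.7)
The plan is to reduce both \eqref{ranma1} and \eqref{ranma2} to the standard Sobolev-algebra convolution estimate in the Fourier variable $\ell\in\Z^d$ dual to $\vphi\in\T^d$, an estimate made possible by the hypothesis $r>d/2$.

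I will first establish the sup-level inequalities (i.e.\ the case $\gamma=0$, $\mathcal O=\{\omega\}$). Expanding $A(\vphi)=\sum_{\ell_1}A(\ell_1)e^{\ii\ell_1\cdot\vphi}$ and $h(\vphi)=\sum_{\ell_2}h(\ell_2)e^{\ii\ell_2\cdot\vphi}$, the $\ell$-th Fourier coefficient of $Ah$ is $(Ah)(\ell)=\sum_{\ell_1+\ell_2=\ell}A(\ell_1)h(\ell_2)$. Applying the action bound \eqref{action} termwise yields
\[
\|(Ah)(\ell)\|_{H^{s-m}}\leq C\sum_{\ell_1}\Nc_{m,p}(A(\ell_1))\,\|h(\ell-\ell_1)\|_{H^s},
\]
with $p=p(s,m)$ independent of the Fourier index $\ell_1$. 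I then use $\langle\ell\rangle^{r}\lesssim \langle\ell_1\rangle^r+\langle\ell-\ell_1\rangle^r$ to split the resulting convolution into two pieces; each is estimated by Young's inequality $\ell^2\ast\ell^1\hookrightarrow\ell^2$, the $\ell^1$ factor being controlled by the weighted $\ell^2$ factor via Cauchy--Schwarz and $\sum_\ell\langle\ell\rangle^{-2r}<+\infty$ (which holds precisely because $r>d/2$). Assembling the pieces via \eqref{psdo} gives
\[
\|Ah\|_{h_{s-m,r}}^{sup}\leq C\,\Nc_{m,r,p}^{sup,\mathcal O}(A)\,\|h\|_{h_{s,r}}^{sup}.
\]
The same argument with $\|R(\ell_1)\|_{\L(H^s,H^{s+\rho})}$ in place of $\Nc_{m,p}(A(\ell_1))$ produces the sup-level version of \eqref{ranma2} directly from the definition \eqref{flusso50}, with no need to invoke \eqref{action}.

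For the Lipschitz part I will use the telescoping identity
\[
A(\omega_1)h(\omega_1)-A(\omega_2)h(\omega_2)=(A(\omega_1)-A(\omega_2))h(\omega_1)+A(\omega_2)(h(\omega_1)-h(\omega_2)),
\]
divide by $|\omega_1-\omega_2|$, take the sup over $\omega_1\neq\omega_2$, and apply the sup-level bound to each summand. Multiplying by $\gamma$ and adding the sup-level estimate produces the weighted inequalities \eqref{ranma1} and \eqref{ranma2} through the definitions \eqref{Lip-norm}, \eqref{elleNormaLip}, \eqref{decayLipPseudo}.

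No real obstacle is expected here: the argument is pure bookkeeping on Fourier coefficients. The only point to verify carefully is that the exponent $p=p(s,m)$ coming from \eqref{action} is uniform in the Fourier index $\ell_1$, which is immediate since the constants in \eqref{action} depend only on $s,m$.
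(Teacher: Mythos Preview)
Your proposal is correct and follows essentially the same route as the paper's proof: expand in Fourier in $\ell$, bound each term by the operator norm times the $H^s$-norm of the input, and then perform a weighted $\ell^2$ convolution estimate exploiting $r>d/2$. The paper distributes the weight $\langle\ell\rangle^r$ by splitting on $|l'|\gtrless\tfrac12|l|$ rather than via $\langle\ell\rangle^r\lesssim\langle\ell_1\rangle^r+\langle\ell-\ell_1\rangle^r$, but these are equivalent standard devices; the paper also leaves the Lipschitz part to ``follows similarly'', which your telescoping identity makes explicit.
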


\begin{proof}
We start by  proving the \eqref{ranma2} for the norm $\|\cdot\|_{h_{s+\rho,r}}$. 
Recalling \eqref{timeseq} we have
\begin{equation*}
\begin{aligned}
\|Rh&\|_{h_{s+\rho,r}}\leq \sum_{l\in \mathbb{Z}^{d}}\langle l\rangle^{2r} \Big(
\sum_{l'\in\mathbb{Z}^{d}}\|R(l-l') h(l')\|_{H^{s+\rho}}
\Big)^{2}\\
&\leq
 \sum_{l\in \mathbb{Z}^{d}}\langle l\rangle^{2r} \Big(
\sum_{l'\in\mathbb{Z}^{d}}\|R(l-l')\|_{\mathcal{L}(H^{s};H^{s+\rho})} \|h(l')\|_{H^{s}}
\Big)^{2}\\
&\leq
 \sum_{l\in \mathbb{Z}^{d}} \Big(
\sum_{|l'|>\frac{1}{2}|l|}\langle l-l'\rangle^{r}\|R(l-l')\|_{\mathcal{L}(H^{s};H^{s+\rho})} 
\langle l'\rangle^{r}
\|h(l')\|_{H^{s}}\frac{\langle l\rangle^{r}}{\langle l'\rangle^{r}}
\Big)^{2}\\
&+
 \sum_{l\in \mathbb{Z}^{d}} \Big(
\sum_{|l'|\leq\frac{1}{2}|l|}
\langle l-l'\rangle^{r}\|R(l-l')\|_{\mathcal{L}(H^{s};H^{s+\rho})} 
\langle l'\rangle^{r}
\|h(l')\|_{H^{s}}\frac{\langle l\rangle^{r}}{\langle l-l'\rangle^{r}}
\Big)^{2}\,.
\end{aligned}
\end{equation*}
Hence, by using the 
Cauchy-Schwartz inequality, we get
\begin{equation*}
\begin{aligned}
\|Rh\|_{h_{s+\rho,r}}&\leq C\sum_{l,l'\in \mathbb{Z}^{d}}\langle l-l'\rangle^{2r}
\|R(l-l')\|^{2}_{\mathcal{L}(H^{s};H^{s+\rho})} \langle l'\rangle^{2r}\|h(l')\|_{H^{s}}^2\\
&\leq C\|h\|_{h_{s,r}}^{2}|R|^2_{\rho,r,s}\,,
\end{aligned}
\end{equation*}
which implies the \eqref{ranma2} for the norm $\|\cdot\|_{h_{s+\rho,r}}$.
The Lipschitz bound on the norm  $\|\cdot\|_{h_{s+\rho,r}}^{\gamma,\mathcal{O}}$ 
and the \eqref{ranma1} follows similarly. 
\end{proof}

  In the following Lemma we state some properties and estimates\footnote{Estimates \eqref{algSeminorm}, \eqref{algSeminorm2} and \eqref{algSeminorm3} 
  are written taking into account Remark \ref{rempseudo}. 
  For instance \eqref{algSeminorm2} should be interpreted as:
  for any $\rho\geq0$ there exists $S$ a  
  $\rho$-smoothing operator such that for any $p\geq1$ for any $s\in\R$ there are constants
 $C=C(m,n, p,r,\rho)>0$, $q=q(m,n,p,r,\rho)\geq 1$ such that
\[
\mathcal{N}_{m+n-1,r,p}^{\gamma,\mathcal{O}} ( [A,B]-S) \le C \mathcal{N}_{m,r,q}^{\gamma,\mathcal{O}} ( A)\mathcal{N}_{n,r,q}^{\gamma,\mathcal{O}} ( B)\,,\\
 |S|_{\rho,s,p}^{\gamma,\mathcal{O}} 
 \le C \mathcal{N}_{m,r,q}^{\gamma,\mathcal{O}} (A) 
 \mathcal{N}_{n,r,q}^{\gamma,\mathcal{O}} (B)\,.
 \]
 } that will be proved in Appendix \ref{appendixAA}.
 \begin{lemma}\label{propertiespdo}
 Let $A,\, B$ are pseudo-differential operators in $\mathcal{A}^{\gamma,\mathcal{O}}_{  m,r}$ 
 and $\mathcal{A}^{\gamma,\mathcal{O}}_{ n,r}.$ For any $p\geq1$ there exist constants $C= C(r,m,n,p)$ and $ q= q(r, m,n,p)$ which is increasing in $p$ such that

\noindent
$(i)$ $AB, \,BA \in \mathcal{A}^{\gamma,\mathcal{O}}_{  m+n,r}$ and 
  \begin{equation}\label{algSeminorm}
  \Nc^{\gamma,\mathcal{O}}_{ m+n,r,p}(AB) ,\,  \Nc^{\gamma,\mathcal{O}}_{m+n,r,p} (BA)
  \le C \Nc^{\gamma,\mathcal{O}}_{ m,r,q}(A)\Nc^{\gamma,\mathcal{O}}_{ n,r,q}(B)\,.
  \end{equation}
  
\noindent
$(ii)$ $[A,B] \in \mathcal{A}^{\gamma,\mathcal{O}}_{ m+n-1,r}$  and 
    \begin{equation}\label{algSeminorm2}
   \Nc^{\gamma,\mathcal{O}}_{ m+n-1,r,p}([A,B]) \le C \Nc^{\gamma,\mathcal{O}}_{m,r,q}(A)
   \Nc^{\gamma,\mathcal{O}}_{n,r,q}(B)\,.
\end{equation}

\noindent
$(iii)$
  Let $\omega \in \mathbb{R}^d$, then $\omega\cdot \partial_{\vphi}A\in \A_{m,r-1}$ and
  \begin{equation}\label{estOmega}
  \Nc^{\gamma,\mathcal{O}}_{ m,r-1,p} (\omega\cdot \partial_{\vphi}A)
  \le C \Nc^{\gamma,\mathcal{O}}_{ m,r,p} (A).
  \end{equation}
  If furthermore  $\om$ satisfies, for some $\alpha>d-1$,
  \begin{equation}\label{diodio}
  | \omega \cdot l|> \frac{\gamma}{|l|^{\alpha}} \,,\qquad \forall \; l\in  \mathbb{Z}^d\setminus\{0\}\,,
  \end{equation}
  and $r-2\alpha-1>d/2$ 
  then $(\omega\cdot \partial_{\vphi})^{-1}A\in \A_{m,r-(2\alpha+1)}$ and
  \begin{equation}\label{estOmega-1}
    \Nc^{\gamma,\mathcal{O}}_{ m,r-(2\alpha+1),p} ((\omega\cdot \partial_{\vphi})^{-1}A)
  \le C\gamma^{-1}\Nc^{\gamma,\mathcal{O}}_{ m,r,p} (A).
  \end{equation}

\noindent
$(iv)$
  For any $t \in [0, 2 \pi]$ we have 
  $ e^{-itK_0} A e^{i tK_0} \in \mathcal{A}^{\gamma,\mathcal{O}}_{  m,r}$  
  and 
   \begin{equation}\label{algSeminorm3}
  \Nc^{\gamma,\mathcal{O}}_{ m,r,p}(e^{-itK_0} A e^{i tK_0}) \le C \Nc^{\gamma,\mathcal{O}}_{ m,r,q}(A)\,.
\end{equation}
  \end{lemma}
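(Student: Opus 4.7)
The plan is to reduce each item to the pointwise pseudo-differential calculus recalled in properties $(ii)$--$(iv)$ above, via Fourier expansion in $\vphi$. Writing $A(\vphi) = \sum_{l\in\Z^d} A(l)e^{il\cdot\vphi}$ and analogously for $B$, the Fourier coefficients of the target operators are explicit: $(AB)(l) = \sum_{l'} A(l-l')B(l')$, $[A,B](l) = \sum_{l'}[A(l-l'),B(l')]$, $(\omega\cdot\partial_\vphi A)(l) = i(\omega\cdot l)A(l)$, $((\omega\cdot\partial_\vphi)^{-1}A)(l) = A(l)/(i\omega\cdot l)$ for $l\neq 0$, and $(e^{-itK_0}Ae^{itK_0})(l) = e^{-itK_0}A(l)e^{itK_0}$. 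Applying the scalar estimates \eqref{compoPseudo3}, \eqref{commuPseudo3}, \eqref{ego} at each Fourier mode---absorbing the $\rho$-smoothing corrections as allowed by Remark \ref{rempseudo}---and summing the square against the weight $\langle l\rangle^{2r}$ produces the bounds \eqref{algSeminorm}, \eqref{algSeminorm2}, \eqref{algSeminorm3}. The $l$-summation in $(i)$ and $(ii)$ is the classical $H^r$-algebra argument for $r > d/2$: split the convolution $\sum_{l'}$ into $|l'|\leq |l|/2$ and $|l'|>|l|/2$, transfer the weight via $\langle l\rangle^r \leq C(\langle l-l'\rangle^r+\langle l'\rangle^r)$, then apply Cauchy--Schwarz and the finiteness of $\sum_l \langle l\rangle^{-2r}$. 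Lipschitz dependence in $\omega$ is propagated coefficient-wise via $\|fg\|^{\gamma,\mathcal{O}}\leq \|f\|^{\gamma,\mathcal{O}}\|g\|^{sup,\mathcal{O}} + \|f\|^{sup,\mathcal{O}}\gamma\|g\|^{lip,\mathcal{O}}$.

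For the direct part of $(iii)$, $|\omega\cdot l|\lesssim \langle l\rangle$ uniformly on $\mathcal{O}$ costs exactly one power of $\langle l\rangle$ and yields \eqref{estOmega}. The substantive point is the inverse. Under the Diophantine condition \eqref{diodio} the scalar multiplier satisfies
\begin{equation*}
\Big\|\frac{1}{\omega\cdot l}\Big\|^{sup,\mathcal{O}} \leq \frac{|l|^\alpha}{\gamma}, \qquad \Big\|\frac{1}{\omega\cdot l}\Big\|^{lip,\mathcal{O}} \leq \frac{|l|^{2\alpha+1}}{\gamma^2},
\end{equation*}
the second bound following from
\begin{equation*}
\Big|\frac{1}{\omega_1\cdot l}-\frac{1}{\omega_2\cdot l}\Big| = \frac{|(\omega_1-\omega_2)\cdot l|}{|\omega_1\cdot l||\omega_2\cdot l|} \leq \frac{|l|^{2\alpha+1}}{\gamma^2}|\omega_1-\omega_2|.
\end{equation*}
Consequently $\|(\omega\cdot l)^{-1}\|^{\gamma,\mathcal{O}} \lesssim |l|^{2\alpha+1}/\gamma$. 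Applying a scalar-Leibniz rule to $A(l)/(\omega\cdot l)$, each Fourier coefficient picks up a factor $|l|^{2\alpha+1}/\gamma$ in its $\gamma$-weighted Lipschitz seminorm, which is exactly absorbed by passing from the weight $\langle l\rangle^{2r}$ to $\langle l\rangle^{2(r-(2\alpha+1))}$ and pulling out $\gamma^{-1}$. Summing in $l$ yields \eqref{estOmega-1}; the hypothesis $r-2\alpha-1 > d/2$ keeps the output within the admissible range of Definition \ref{def:pseudotempo}. Item $(iv)$ is the simplest: conjugation $A(l)\mapsto e^{-itK_0}A(l)e^{itK_0}$ acts mode-by-mode and is $\omega$-independent, so \eqref{ego} summed against $\langle l\rangle^{2r}$ gives \eqref{algSeminorm3} uniformly in $t\in[0,2\pi]$.

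The main technical subtlety is the bookkeeping in $(iii)$: the exponent $2\alpha+1$ of the loss of regularity in $\vphi$ and the $\gamma^{-1}$ (rather than $\gamma^{-2}$) prefactor in \eqref{estOmega-1} both depend on how the weighted Lipschitz norm \eqref{Lip-norm} balances the sup- and Lipschitz-contributions. Getting these constants right requires the two-term decomposition $\mathcal{N}^{lip}(A/f) \leq \mathcal{N}^{sup}(1/f)\,\mathcal{N}^{lip}(A) + |1/f|^{lip}\,\mathcal{N}^{sup}(A)$ to be carried out cleanly, separating the sup- and Lipschitz-halves of both $A(l)$ and the scalar $1/(\omega\cdot l)$; once this is done the tame $l$-summation becomes automatic.
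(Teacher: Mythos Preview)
Your proposal is correct and follows essentially the same approach as the paper: reduce items $(i)$, $(ii)$, $(iv)$ to the mode-by-mode pseudo-differential estimates \eqref{compoPseudo3}--\eqref{commuPseudo3}, \eqref{ego} via the Fourier expansion in $\vphi$ and the definition \eqref{psdo}, and for item $(iii)$ write $B(\omega;l)=A(\omega;l)/(i\omega\cdot l)$, bound the sup- and Lipschitz-parts separately using the Diophantine lower bound \eqref{diodio} on $|\omega\cdot l|$, then recombine into the $\gamma$-weighted norm. The paper carries out the $(iii)$ computation by writing $B(\omega_1)-B(\omega_2)$ explicitly rather than invoking a scalar product rule for $\|\cdot\|^{\gamma,\mathcal{O}}$, but this is the same computation in slightly different packaging.
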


\subsection{Conjugations rules}
Let $\omega\cdot\pa_{\vphi}$ be the diagonal operator acting on sequences $z\in \ell_{s,r}$ (see \eqref{timeseq}) defined by
\begin{equation}\label{omegaphi}
\omega\cdot\pa_{\vphi}z:={\rm diag}_{l\in \mathbb{Z}^{d}, k\in \mathbb{N}}(\ii \omega\cdot l)z=
(\ii \omega\cdot l z_{[k]}(l))_{l\in \mathbb{Z}^{d}, k\in \mathbb{N}}.
\end{equation}
 Consider an operator of the form
 \begin{equation}\label{operTotale}
 L:=L(\vphi,\omega):=\omega\cdot\pa_{\vphi}+\ii M(\vphi)\,,
 \end{equation}
 where $M(\vphi)$ is some map
 $\mathbb{T}^{d}\ni\vphi\mapsto M=M(\vphi)\in\mathcal{L}(H^{s};H^{s+m})$, 
 for some $m\in \mathbb{R}$.
We shall study how the operator $L$ in \eqref{operTotale} conjugates under the map $\Phi_{S}$
defined as
\begin{equation}\label{operTotale2}
\Phi_{S}:=(\Phi_{S}^{\tau})_{|\tau=1}\,, \qquad \Phi_{S}^{\tau}:=e^{\ii \tau S}
=\sum_{p=0}^{\infty}\frac{1}{p!}(\ii S)^{p}
\end{equation}
where
$S(\vphi)$ is some map
 $\mathbb{T}^{d}\ni\vphi\mapsto S=S(\vphi)\in\mathcal{L}(H^{s};H^{s+m'})$, 
 for some $m'\in \mathbb{R}$.
For the well-posedness of a map of the form \eqref{operTotale2}
we refer to Lemma \ref{lemmaB1} in Appendix \ref{AppC}.

By using the Lie series expansions we have
\begin{equation}\label{operTotale3}
\begin{aligned}
L^{+}=L^{+}(\vphi)&:=\Phi_{S}\circ L\circ \Phi_{S}^{-1}=
\omega\cdot\pa_{\vphi}+\ii M^{+}(\vphi)
\end{aligned}
\end{equation}
where $M^{+}(\vphi)=M_1^{+}(\vphi)+M_{2}^{+}(\vphi)$ with, for any $q\in \mathbb{N}$,
\begin{equation}\label{operTotale4}
\begin{aligned}
\ii M_1^{+}(\vphi):=\Phi_{S}\circ \ii M\circ \Phi_{S}^{-1}&=\ii M+
\sum_{p=1}^{q}\frac{1}{p!}{\rm ad}_{\ii S}^{p}(\ii M)
+
\frac{1}{q!}\int_{0}^{1}(1-\tau)^{q}\Phi_{S}^{\tau}{\rm ad}_{\ii S}^{q+1}(\ii M)\Phi_{S}^{-\tau}d\tau\,,
\end{aligned}
\end{equation}
and 
\begin{equation}\label{operTotale5}
\begin{aligned}
\ii M_2^{+}(\vphi)&:=\Phi_{S}\circ \omega\cdot\pa_{\vphi}\circ \Phi_{S}^{-1}-\omega\cdot\pa_{\vphi}\\
&=
-\ii \omega\cdot\pa_{\vphi}S-
\sum_{p=2}^{q}\frac{1}{p!}{\rm ad}_{\ii S}^{p-1}(\ii  \omega\cdot\pa_{\vphi}S)
+
\frac{1}{q!}\int_{0}^{1}(1-\tau)^{q}\Phi_{S}^{\tau}{\rm ad}_{\ii S}^{q}(\ii  \omega\cdot\pa_{\vphi}S)\Phi_{S}^{-\tau}d\tau\,,
\end{aligned}
\end{equation}
where we defined ${\rm ad}^{0}_{ S}( M)=M$ and 
\begin{equation}\label{commu}
{\rm ad}^{p}_{ S}(M)=
{\rm ad}^{p-1}_{ S}([ S, M])\,,\quad 
[S, M]= S M-M S\,.
\end{equation}

\begin{remark}{\bf (Hamiltonian structure)} We remark that, if the operator 
$S$ in and $M$ are \emph{Hermitian}, then by Lemma $2.9$ in \cite{FG1},
we have that also the operator $M^{+}$ in \eqref{operTotale3}
is Hermitian.

\end{remark}

\subsection{Linear operators and matrices.}

According to the 
orthogonal splitting
\begin{equation}\label{orthosplitto}
L^{2}(\mathbb{S}^{n};\mathbb{C})=\bigoplus_{k\in \mathbb{N}} E_{k},
\end{equation}
we identify a linear operator 
acting on $L^{2}(M;\mathbb{C})$
with its matrix 
representation $A:=\Big(A_{[k]}^{[k']}\Big)_{k,k'\in\mathbb{N}}$
in $\mathcal{L}(h^{0})$ (recall \eqref{spseq})
with blocks $A_{[k]}^{[k']}\in \mathcal{L}(E_{k'};E_{k})$.
Notice that each block
$A_{[k]}^{[k']}$ is a $d_{k}\times d_{k'}$:
\begin{equation}\label{notazioneBlock}
A_{[k]}^{[k']}:=\Big(A_{k,j}^{k',j'}\Big)_{\substack{j=1,\ldots, d_{k},\\ j'=1,\ldots,d_{k'}}}\,.
\end{equation}
The action of the operator $A$ on functions $u(x)$ as in \eqref{FouExp} 
of the space variable
in $L^{2}(\mathbb{S}^{n};\mathbb{C})$ is given by
\begin{equation}\label{notazioneBlock2}
(Au)(x)=\sum_{k\in \mathbb{N}}(Az)_{[k]}\cdot\Phi_{[k]}(x)\,,\qquad z_{[k]}\in \mathbb{C}^{d_{k}}\,,\qquad 
(Az)_{[k]}=\sum_{j\in \mathbb{N}}A_{[k]}^{[j]}z_{[j]}\,.
\end{equation}

Given $s, s'\in \mathbb{R}$ we denote by $\mathcal{L}(H^{s}, H^{s'})$
the space of linear bounded operators  form $H^{s}$ to $H^{s'}$ endowed 
with the standard operator  norm $\|\cdot\|_{\mathcal{L}(H^{s}, H^{s'})}$.

In this paper we also consider regular
 $\vphi$-dependent families of linear operators 
\begin{equation}\label{timeMat}
\mathbb{T}^{d}\ni\vphi\mapsto A=A(\vphi)=\sum_{l\in\mathbb{Z}^{d}}A(l)e^{\ii l\cdot\vphi}
\end{equation}
where $A(l)$ are linear operators in $\mathcal{L}(H^{s}, H^{s'})$,
for any $ l\in \mathbb{Z}^{d}$\,.
We also regard $A$ as an operator acting on functions 
$u(\vphi,x)$ of space-time 
as
\[
(Au)(\vphi,x)=(A(\vphi)u(\vphi,\cdot))(x)\,.
\]
More precisely, expanding $u$ as in \eqref{decompogo},
we have
\begin{equation}\label{azione}
\begin{aligned}
(Au)(\vphi,x)&=\sum_{l\in \mathbb{Z}^{d},k\in \mathbb{N}}(Az)_{[k]}(l)
e^{\ii l\cdot\vphi}\Phi_{[k]}(x)\,,\\
(Az)_{[k]}(l)&=\sum_{p\in \mathbb{Z}^{d},k'\in N}A_{[k]}^{[k']}(l-p)z_{[k']}(p)\,.
\end{aligned}
\end{equation}
Relation \eqref{notazioneBlock2} shows that, in order to define operators  that conserve the $H^s$ regularity in space we need to assume some decay of $\|A_{[k]}^{[k']}\|^{2}_{\mathcal{L}(L^{2})}$ with respect to $|k-k'|$. That the reason for the following definition first introduced in \cite{BP} for (i) and in \cite{BCP} for (ii)

\begin{definition}{\bf ($s$-decay norm)}\label{decayNorm}

\noindent(i) We define the $s$-decay norm of a matrix $A\in \mathcal{L}(H^{s};H^{s})$ as 
\begin{equation}\label{decayNorm1}
| A|_{s}^{2}:=\sum_{h\in \mathbb{N}}\langle h\rangle^{2s}
\sup_{|k-k'|=h}\|A_{[k]}^{[k']}\|^{2}_{\mathcal{L}(L^{2})}
\end{equation}
where $\|\cdot\|_{\mathcal{L}(L^{2})}$ is the $L^{2}$-operator norm 
in $\mathcal{L}(E_{k'},E_k)$.

\noindent (ii) 
Consider a map 
$\mathbb{T}^{d}\ni\vphi\mapsto A=A(\vphi)\in \mathcal{L}(H^{s};H^{s})$. 
We define its decay norm as
\begin{equation}\label{decayNorm2}
\bral A\brar_{s}^{2}:=\sum_{l\in \mathbb{Z}^{d}, 
h\in \mathbb{N}}
\langle l,  h\rangle^{2s}
\sup_{|k-k'|=h}\|A_{[k]}^{[k']}(l)\|^{2}_{\mathcal{L}(L^{2})} 
\end{equation}
We denote by $\mathcal{M}_{s}$ the space  matrices with finite 
$s$-decay norm $\bral \cdot\brar_{s}$.

\noindent (iii) Consider a  Lipschitz family 
$\mathcal{O}\ni\omega\mapsto A(\omega)\in \mathcal{M}_{s}$
where $\mathcal{O}$ is a compact subset of $\mathbb{R}^{d}$, $d\geq 1$.
For $\gamma>0$ we define the Lipschitz decay norm
as
\begin{equation}\label{decayLip}
\begin{aligned}
\bral A\brar^{\gamma,\mathcal{O}}_{s}&:=\bral A\brar^{sup,\mathcal{O}}_{s}
+\gamma
\bral A\brar^{lip,\mathcal{O}}_{s}\\
&=
\sup_{\omega\in \mathcal{O}}\bral A(\omega)\brar_{s}+
\gamma
\sup_{\substack{\omega_1,\omega_2\in \mathcal{O}\\ 
\omega_1\neq\omega_2}}\frac{\bral A(\omega_1)-A(\omega_2)\brar_{s}}{|\omega_1-\omega_2|}\,.
\end{aligned}
\end{equation}
We denote by $\mathcal{M}^{\gamma,\mathcal{O}}_{s}$ the space of 
families of Lipschitz mapping $\om\mapsto A(\omega)\in \mathcal{M}_{s} $
with finite  $|\cdot|_{s}^{\gamma,\mathcal{O}}$-norm.
\end{definition}
\begin{remark}
The $s$-decay norm \eqref{decayNorm2} link the regularity in space and the regularity in $\phi$ (i.e. in time). In fact for $s$ integer we have
$$\mathcal M_s =\cap_{p+q\leq s}H^p(\T^d, \L(H^q(\mathtt M^n),H^q(\mathtt M^n))).$$

\end{remark}
\begin{remark}
Notice that, if the $s$-decay norm of a matrix $A$ is finite, then
\[
\|A_{[k]}^{[k']}\|_{\mathcal{L}(L^{2})}\leq C(s) \bral A\brar_{s} \langle k-k'\rangle^{-s}\,.
\]
\end{remark}
We have the following fondamental lemma stating in particular  that the $s$-decay norm is tame (see \eqref{decayTame}).  This tame property will be crucial in the KAM procedure.

\begin{lemma}\label{lem:tame}
For any $s>(d+n)/2$ the following holds:

\noindent
$(i)$ 
there is $C=C(s)>0$ such that (recall  \eqref{spaziocomp},\eqref{elleNorma})

\begin{equation}\label{azio}
\|Az\|_{\ell_{s}}\leq C \bral A\brar_{s}\|z\|_{\ell_{s_0}}+C
 \bral A\brar_{s_0}\|z\|_{\ell_{s}}\,,
\end{equation}
for any $h\in \ell_{s}$;

\noindent
$(ii)$ there is $C=C(s)>0$ such that
\begin{equation}\label{decayTame}
\bral AB\brar_{s}\leq  C\bral A\brar_{s}\bral B\brar_{s_0}+C \bral A\brar_{s_0}\bral B\brar_{s}\,;
\end{equation}

\noindent
$(iii)$ 
for $N>0$ we define  (recall \eqref{timeMat})
the matrix $\Pi_{N}A$ as
\begin{equation}\label{cutOff}
(\Pi_{N}A)_{[k]}^{[k']}(l):=\left\{
\begin{aligned} &A_{[k]}^{[k']}(l)\,,\quad l\in \mathbb{Z}^{d}\,, k,k'\in\mathbb{N}\,,\quad
\begin{aligned}
&|l|\leq N\,, \\ &|k-k'|\leq N\,,\end{aligned}
\\
&0\,,\qquad {\rm otherwise}
\end{aligned}
\right.
\end{equation}
One has
\begin{equation}\label{smoothesti}
\bral ({\rm Id}- \Pi_{N})A\brar_{s}\leq C
N^{-\beta} \bral A\brar_{s+\beta}\,,\quad \beta\geq 0\,,
\end{equation}
for some $C=C(s)>0$.

Similar bounds holds also replacing $\|\cdot\|_{\ell_{s}}$, $\bral \cdot\brar_{s}$
with the norms $\|\cdot\|_{s}^{\gamma,\mathcal{O}}$, 
$\bral\cdot\brar_{s}^{\gamma,\mathcal{O}}$ respectively
(see \eqref{elleNormaLip}, \eqref{decayLip}).

\end{lemma}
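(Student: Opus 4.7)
The plan is to treat the three items in the order (iii), (ii), (i), since the smoothing bound is nearly immediate from \eqref{decayNorm2} and sets up the notation required for the more substantial tame product estimate, from which (i) is a one-sided analogue.

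For (iii), I would observe that $\bigl(({\rm Id}-\Pi_N)A\bigr)_{[k]}^{[k']}(l)$ is nonzero only when $|l|>N$ or $|k-k'|>N$, in which case $\langle l,|k-k'|\rangle\geq N$. Factoring the bound $\langle l,h\rangle^{-2\beta}\leq N^{-2\beta}$ out of the definition of the $(s+\beta)$-decay norm yields \eqref{smoothesti} directly. This step also illustrates the fact, reused below, that the $s$-decay norm behaves like a weighted $\ell^2$-norm of block operator norms indexed by $(l,h)=(l,|k-k'|)$.

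For (ii), I would start from the matrix representation
\[
(AB)_{[k]}^{[k']}(l)=\sum_{l'\in\mathbb{Z}^d,\,k''\in\mathbb{N}}A_{[k]}^{[k'']}(l-l')\,B_{[k'']}^{[k']}(l'),
\]
take the $\mathcal{L}(L^2)$-operator norm, and apply the elementary weight-splitting inequality
\[
\langle l,|k-k'|\rangle^{s}\lesssim \langle l-l',|k-k''|\rangle^{s}+\langle l',|k''-k'|\rangle^{s},
\]
which follows from $l=(l-l')+l'$ and $|k-k'|\leq |k-k''|+|k''-k'|$. In each of the two resulting sums I would pair Cauchy--Schwarz over $(l',k'')$ so that one factor carries the weight $\langle\cdot\rangle^{s}$ (contributing the $s$-decay norm of either $A$ or $B$) and the complementary factor carries only $\langle\cdot\rangle^{s_0}$ (contributing the $s_0$-decay norm of the other). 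The remaining $\ell^1$-type sum $\sum_{(l,k)}\langle l,k\rangle^{-2s_0}$ is finite thanks to the hypothesis $s_0>(d+n)/2$. Taking the supremum over $|k-k'|=h$, then squaring and summing in $(l,h)$, yields \eqref{decayTame}. The Lipschitz-weighted version follows by writing
\[
A(\omega_1)B(\omega_1)-A(\omega_2)B(\omega_2)=\bigl(A(\omega_1)-A(\omega_2)\bigr)B(\omega_1)+A(\omega_2)\bigl(B(\omega_1)-B(\omega_2)\bigr)
\]
and applying the same bound to each summand.

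For (i), the argument is a one-sided analogue of (ii): from
\[
(Az)_{[k]}(l)=\sum_{l'\in\mathbb{Z}^d,\,k'\in\mathbb{N}}A_{[k]}^{[k']}(l-l')\,z_{[k']}(l'),
\]
I would apply the same splitting $\langle l,k\rangle^{s}\lesssim \langle l-l',k-k'\rangle^{s}+\langle l',k'\rangle^{s}$, then Cauchy--Schwarz in $(l',k')$; the first term produces $\bral A\brar_{s}\|z\|_{\ell_{s_0}}$ and the second $\bral A\brar_{s_0}\|z\|_{\ell_s}$, after using the summability of $\sum\langle\cdot\rangle^{-2s_0}$. The main obstacle is the bookkeeping in (ii): one must partition indices so that, after Cauchy--Schwarz, exactly one factor carries the $s$-decay norm and the other the $s_0$-decay norm, rather than a non-tame term $\bral A\brar_{s}\bral B\brar_{s}$. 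Once this convolutional scheme is in place, item (i), the weighted analogues, and the remark relating $\mathcal{M}_s$ to $H^p(\mathbb{T}^d,\mathcal{L}(H^q,H^q))$ all follow by essentially the same template.
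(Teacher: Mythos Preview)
Your proposal is correct and follows the standard route. The paper itself does not spell out the argument: it simply cites Lemmata~2.6 and~2.7 of \cite{BCP} for items $(i)$ and $(ii)$, and says that $(iii)$ follows directly from the definition of the norm in \eqref{decayNorm2}. What you have written is precisely the convolution/weight-splitting/Cauchy--Schwarz scheme that underlies the proofs in \cite{BCP}, so there is no genuine methodological difference---you have merely unpacked the citation. One small notational point: in your treatment of $(i)$, the weight on the $A$-factor should read $\langle l-l',|k-k'|\rangle^{s}$ rather than $\langle l-l',k-k'\rangle^{s}$, since the decay norm \eqref{decayNorm2} is indexed by the distance $h=|k-k'|$; the inequality $k\le |k-k'|+k'$ (valid for $k,k'\in\mathbb{N}$) is what makes the splitting go through.
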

\begin{proof}
Items $(i)$ and $(ii)$ follow by lemmata $2.6$, $2.7$ in \cite{BCP}.
Item $(iii)$ follows by the definition of the norm in \eqref{decayNorm2}.
\end{proof}

We will also need  a  class of matrices that take into account a notion of regularization.
\begin{definition}
\label{1smooth}
Define the diagonal $\vphi$-independent operator  $\mathcal{D}$, 
acting on 
$z\in \ell_{s}$ (see \eqref{spaziocomp}), as 
\begin{equation}\label{Diag}
\mathcal{D}z:=
{\rm diag}_{l\in \mathbb{Z}^{d}, k\in 
\mathbb{N}}\big(\lambda_{k}\big)z=
\big(\lambda_{k} z_{[k]}(l)\big)_{l\in \mathbb{Z}^{d}, k\in \mathbb{N}}\,.
\end{equation}
For $\beta\in \mathbb{R}$
we define the  norm $\bral \cdot \brar_{\beta,s}$ 
of a matrix $A$  in \eqref{timeMat}
as
\begin{equation}\label{decayNorm3}
\bral A\brar_{\beta, s}:=\bral\mathcal{D}^{\beta}A\brar_{s}+ \bral A\mathcal{D}^{\beta}\brar_{s}\,.
\end{equation}
We denote by $\mathcal{M}_{\beta,s}$ the space of  maps $\mathbb{T}^{d}\ni\vphi\mapsto A=A(\vphi)\in\L(L^2)$   with finite $\bral \cdot\brar_{\beta,s}$-norm.

Consider a family 
$\mathcal{O}\ni\omega\mapsto A(\omega)\in \mathcal{M}_{\beta,s}$
where $\mathcal{O}$ is a compact subset of $\mathbb{R}^{d}$, $d\geq 1$.
For $\gamma>0$ we define the Lipschitz norm
as
\begin{equation}\label{decayLip33}
\begin{aligned}
\bral A\brar^{\gamma,\mathcal{O}}_{\beta,s}&:=
\bral A\brar^{sup,\mathcal{O}}_{\beta,s}+\gamma
\bral A\brar^{lip,\mathcal{O}}_{\beta,s}
\\&=
\sup_{\omega\in \mathcal{O}}\bral A(\omega)\brar_{\beta,s}+
\gamma
\sup_{\substack{\omega_1,\omega_2\in \mathcal{O}\\ \omega_1\neq\omega_2}}\frac{\bral A(\omega_1)-A(\omega_2)\brar_{\beta,s}}{|\omega_1-\omega_2|}\,.
\end{aligned}
\end{equation}
We denote by $\mathcal{M}^{\gamma,\mathcal{O}}_{\beta,s}$ 
the space of 
families of matrices $A(\omega)$
with finite  $\bral \cdot\brar_{\beta,s}^{\gamma,\mathcal{O}}$-norm.

\end{definition}

For  properties of
 matrices in $\mathcal{M}^{\gamma,\mathcal{O}}_{\beta,s}$ we refer to Appendix \ref{techtech} and in particular Lemma \ref{DecayAlg2} stating a tame property for the norm given by \eqref{decayLip33}.

We end this section with the following definition:
\begin{definition}{\bf (Block-diagonal matrices)}\label{bloccodia}
We say that $A(\vphi)$
 is  \emph{block-diagonal}  if  and only if $A_{[k]}^{[k']}(\vphi)=0$ 
 for any $k\neq k'$ and any $\vphi\in \mathbb{T}^{d}$.
\end{definition}

We notice that operators commuting with $K_0$ have matrices that are block-diagonal: let $Z$ be such that
\begin{equation}\label{commutare}
[K_0,Z]=0\,.
\end{equation}
Since $$[H_0,Z]_{[k]}^{[k']}=(\lambda_{k'}-\lambda_{k} )Z_{[k]}^{[k']}\quad \forall k,k'\, ,$$
 condition \eqref{commutare} implies
that the matrix $(Z_{[k]}^{[k']}))_{k,k'\in \mathbb{N}}$ representing the operator $Z$
is block-diagonal  according to Definition \ref{bloccodia}.

\subsection{Link between  pseudo-differential operators and  matrices}

 To a linear operator $R$ we associate its matrix representation still denoted $R$ through the formula
\begin{equation}\label{QA} 
  R_{[k]}^{[k']} = \int_{\mathtt{M}^n} R \Phi_{[k]} \Phi_{[k']} dx .
 \end{equation}
 In the following we  show that the decay norm $\bral \cdot\brar_{\beta,s}$ 
 (see Definitions \ref{decayNorm} and \ref{1smooth}) 
 is well designed to capture the smoothing property.

\begin{lemma}\label{matrixest} Fix $s>(d+n)/2$ and $\beta \geq0$.
Assume that  $R\in\Rc_{\rho,s}$ with $\rho\geq s+\beta+1/2$ and that $R$ is symmetric  then
 $R\in\mathcal{M}_{ \beta,s}$. 
Moreover, there exists a constant $ C= C(s,\rho,\beta)$  such that  
\begin{align}\label{ranma3}
\bral R\brar_{\beta, s} \le C|R|_{\rho,s,s} 
\end{align}
If $R\in \Rc_{\rho,s}^{\gamma, \mathcal{O}}$ 
then the bound \eqref{ranma3} holds 
with the norms $\bral \cdot\brar_{\beta,s,}$, $|\cdot|_{\rho,s,s}$ 
replaced by the norms 
$\bral \cdot\brar_{\beta,s}^{\gamma, \mathcal{O}}$, 
$|\cdot|^{\gamma, \mathcal{O}}_{\rho,s,s}$.
\end{lemma}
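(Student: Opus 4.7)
The plan is to bound, for each Fourier mode $l\in\Z^d$ and each off-diagonal distance $h=|k-k'|$, the operator norm of the block $R_{[k]}^{[k']}(l)$ (possibly weighted by $\lambda_k^\beta$ or $\lambda_{k'}^\beta$) by a decaying factor in $h$ times $\|R(\pm l)\|_{\L(H^s,H^{s+\rho})}$, and then to sum against $\langle l,h\rangle^{2s}$ in order to recover $|R|_{\rho,s,s}^{2}$.

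First I would derive the pointwise block estimate. For $u\in E_{k'}$ with $\|u\|_{L^2}=1$, the fact that $u$ is an eigenvector of $K_0$ yields $\|u\|_{H^s}\sim \lambda_{k'}^s$, and the definition of the $H^{s+\rho}$-norm via $K_0$ gives $\lambda_k^{s+\rho}\|\Pi_{E_k}w\|_{L^2}\leq \|w\|_{H^{s+\rho}}$ for any $w$. Applied to $w=R(l)u$, this produces
\[
\|R_{[k]}^{[k']}(l)\|_{\L(L^2)} \;\leq\; C\,\lambda_k^{-(s+\rho)}\,\lambda_{k'}^{s}\,\|R(l)\|_{\L(H^s,H^{s+\rho})}.
\]
The crucial next step exploits the hypothesis that $R$ is symmetric: $R(\vphi)^{*}=R(\vphi)$ on $L^2$ forces $R(l)^{*}=R(-l)$, whence $(R_{[k]}^{[k']}(l))^{*}=R_{[k']}^{[k]}(-l)$, and the two have equal $L^2$-operator norms. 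Applying the previous bound to $R_{[k']}^{[k]}(-l)$ produces the mirror estimate
\[
\|R_{[k]}^{[k']}(l)\|_{\L(L^2)} \;\leq\; C\,\lambda_{k'}^{-(s+\rho)}\,\lambda_{k}^{s}\,\|R(-l)\|_{\L(H^s,H^{s+\rho})}.
\]

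Selecting between the two bounds so that the large negative exponent $-(s+\rho)$ falls on $\max(k,k')$, and using $\max(k,k')\geq h$ together with $\beta-\rho\leq -s-1/2<0$ to absorb $\lambda_k^\beta$ or $\lambda_{k'}^\beta$, I obtain the symmetric estimate
\[
\lambda_k^\beta\|R_{[k]}^{[k']}(l)\|_{\L(L^2)},\;\;\lambda_{k'}^\beta\|R_{[k]}^{[k']}(l)\|_{\L(L^2)} \;\leq\; C(1+h)^{\beta-\rho}\bigl(\|R(l)\|_{\L(H^s,H^{s+\rho})}+\|R(-l)\|_{\L(H^s,H^{s+\rho})}\bigr).
\]
Inserting these into \eqref{decayNorm2} and \eqref{decayNorm3}, the hypothesis $\rho\geq s+\beta+1/2$ gives $\sum_h \langle l,h\rangle^{2s}(1+h)^{2(\beta-\rho)}\lesssim \langle l\rangle^{2s}$, and the remaining sum over $l\in\Z^d$ with weight $\langle l\rangle^{2s}\|R(\pm l)\|^2$ produces $C|R|_{\rho,s,s}^{2}$, yielding \eqref{ranma3}. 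The Lipschitz statement follows directly, since the pointwise block bound is linear in $R$ and therefore applies to the difference $R(\omega_1)-R(\omega_2)$; dividing by $|\omega_1-\omega_2|$ gives the corresponding estimate on the Lipschitz semi-norm and hence on the full weighted norm.

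The main obstacle is the symmetrization step. The first pointwise bound alone only controls the blocks where $k\geq k'$, since it requires absorbing $\lambda_{k'}^s$ into $\lambda_k^s$ and then a power of $\lambda_{\max(k,k')}$ into the $\beta$-weight; in the regime $k\leq k'$ it is out of reach without a second estimate with the roles of $k$ and $k'$ reversed. The identity $(R_{[k]}^{[k']}(l))^{*}=R_{[k']}^{[k]}(-l)$ supplied by Hermiticity is exactly what produces this mirror bound and lets the argument close uniformly for both $\bral\mathcal{D}^\beta R\brar_s$ and $\bral R \mathcal{D}^\beta\brar_s$.
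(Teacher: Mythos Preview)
Your proposal is correct and follows essentially the same route as the paper: derive the one-sided block bound from $R(l)\in\mathcal L(H^s,H^{s+\rho})$, use Hermiticity to obtain the mirror bound with $k,k'$ exchanged, pick the bound that places the exponent $-(s+\rho)$ on $\max(k,k')\geq h$, absorb the $\mathcal D^\beta$ weight, and then sum $\langle h\rangle^{2s}\langle h\rangle^{2(\beta-\rho)}$ against $\langle l\rangle^{2s}\|R(\pm l)\|^{2}$. Your observation that Hermiticity of $R(\vphi)$ yields $R(l)^*=R(-l)$, so the mirror estimate carries $\|R(-l)\|$ rather than $\|R(l)\|$, is in fact a shade more careful than the paper's write-up; since the final sum over $l$ is invariant under $l\mapsto -l$ this makes no difference to the outcome.
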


\begin{proof} 
We have for $\ell\in\Z^d$
\begin{align*}
||R_{[k]}^{[k']}(\ell)||_{\mathcal{L}(L^2)} 
&=|\langle D^{\rho+s}R(\ell) \Phi_{[k]}, D^{-\rho-s}\Phi_{[k']} \rangle |
\\&\le 
|| D^{\rho+s}R(\ell) \Phi_{[k]} ||_{L^2} || \Phi_{[k']}||_{L^2} \langle k' \rangle^{-\rho-s} 
\\&\le 
\|R(\ell)\|_{\L(H^s,H^{s+\rho})} \| \Phi_{[k]}\|_{H^{s}} \langle k' \rangle^{-\rho-s} 
\\&\le  
\|R(\ell)\|_{\L(H^s,H^{s+\rho})}\langle k \rangle^{s} \langle k' \rangle^{-\rho-s} 
\end{align*}
where we used that, for $s\in\R$ 
(recall \eqref{equiweight}),
\[
\| \Phi_{k,j}\|_{H^{s}}\sim  \| K_0^s\Phi_{k,j}\|_{L^2}
=\lambda_{k}^{s}\sim \langle k\rangle^s\,.
\]
Similarly, since $R$ is symmetric, 
\[
||R_{[k]}^{[k']}(\ell)||_{\mathcal{L}(L^2)} \leq 
\|R(\ell)\|_{\L(H^s,H^{s+\rho})}\langle k' \rangle^{s} \langle k \rangle^{-\rho-s} \,,
\]
therefore we get
\[
||R_{[k]}^{[k']}(\ell)||_{\mathcal{L}(L^2)} \leq 
\min\big(\langle k' \rangle^{s} \langle k \rangle^{-\rho-s},\langle k\rangle^{s} \langle k' \rangle^{-\rho-s}\big)\|R(\ell)\|_{\L(H^s,H^{s+\rho})}\,.
\]
So, by definition, we get using that $\langle h,\ell\rangle\leq \langle \ell\rangle\langle h\rangle$,
\begin{align*}
\bral \mathcal{D}^{\beta} R\brar_{s}^2 &= 
\sum_{h\in \mathbb{N},\ell\in\Z^d}\langle h,\ell\rangle^{2s}
\sup_{|k-k'|=h}\|(\mathcal{D}^{\beta} R)_{[k]}^{[k']}(\ell)\|^{2}_{\mathcal{L}(L^{2})}\\
&\leq \sum_{\ell\in\Z^d}\langle \ell\rangle^{2s}
\|R(\ell)\|_{\L(H^s,H^{s+\rho})} \sum_{h\in \mathbb{N}}\langle h\rangle^{2s}\sup_{|k-k'|=h}   
\langle k\rangle^{2\beta}
\min\big(\langle k' \rangle^{s} \langle k \rangle^{-\rho-s},\langle k\rangle^{s} \langle k' \rangle^{-\rho-s}\big) \\
 &\le 2^{2\rho-2\beta}|R|^2_{\rho,s,s} 
 \sum_{h \in \mathbb{N}}\langle h\rangle^{2s+2\beta-2\rho} 
\end{align*}
where we used that if $|k-k'|=h$ 
then  $\max(|k|,|k'|)\geq h/2$. 
A similar estimates holds true for 
$\bral R\mathcal{D}^{\beta} \brar_{s}$ and thus for
\[
\bral R\brar_{\beta, s} = \bral \mathcal{D}^{\beta} R\brar_{s}
+ \bral R\mathcal{D}^{\beta} \brar_{s}\,.
\]
Following a similar reasoning one gets the Lipschitz bounds.
\end{proof}

\section{Regularization procedure}\label{sec3}

Let us consider $0<\delta<1$, $r>d/2$
and the operator
\begin{equation}\label{starting-point}
\mathcal{F}=\mathcal{F}(\omega):=\omega\cdot\pa_{\vphi}+\ii (\Delta_g + V(\vphi))\,,
\qquad V\in \mathcal{A}_{\delta,r}\,.
\end{equation}
We also assume that the operator $V$ is \emph{self-adjoint}.
Let us define the diophantine set
 $\mathcal{O}_{0}\subseteq [1/2,3/2]^{d}$ by
\begin{equation}\label{zeroMel}
\mathcal{O}_0:=\big\{\omega\in [1/2,3/2]^{d} \,:\, |\omega\cdot l|\geq 
\frac{4\gamma}{|l|^{\tau}}\,, \; \forall \, l\in\mathbb{Z}^{d}\big\}\,,
\qquad \tau:= d+1\,.
\end{equation}

The aim of this section is to prove the following result.

\begin{theorem}{\bf (Regularization)}\label{thm:regolo}
Let  $\rho_0\geq 0$, $0<\delta<1$ and $r_0>d/2$.
There is $r_{*}=r_{*}(\delta,\rho_0,r_0)$ such that, for
 $r>r_*$ and $S\geq s_0>n/2$,
 there exist 
$p=p(S,\rho_0)\geq1$ and $0<\e_{*}=\e_*(S,\rho_0) $ such that the following holds. If
\begin{equation}\label{smalloo}
\gamma^{-1}\Nc_{\delta,r,p}(V)\leq \e_* \,.
\end{equation}
then there is,
for any $\vphi\in \mathbb{T}^{d}$, for any $\om\in\mathcal O_0$,
  a bounded and invertible map $\Phi \in\L(H^{s},H^{s})$ for any $s\in[s_0,S]$  such that
\begin{equation}\label{fine100}
\mathcal{F}_{+}:=\Phi \mathcal{F}\Phi^{-1}
:=\omega\cdot\pa_{\vphi}+\ii(\Delta_g+Z+R)\,,
\end{equation}
where $Z\in \mathcal{A}_{\delta}^{\gamma,\mathcal{O}_0}$ 
is independent of $\vphi$, $Z$ is Hermitian  
and
\begin{equation}\label{barnantes}
[Z,K_0]=0\,,
\end{equation}
 $R(\varphi)$ is a Hermitian $\rho_0$-smoothing operator  
 in $ \mathcal{R}_{\rho_0,r_0}^{\gamma,\mathcal{O}_0}$.
 
 \noindent
Furthermore $Z=Z_{1}+Z_{2}$ with $Z_{1}\in\mathcal{A}_{\delta}$ is independent of $\omega\in \mathcal{O}_0$, and 
$Z_{2}\in\mathcal{A}_{2\delta-1}^{\gamma,\mathcal{O}_0}$.\\
Moreover the following estimates holds: for any $s\in[s_0,S]$ there exits $q=q(s,\rho_0)\geq p$ and $C=C(s,\rho_0)>0$ such that
\begin{align}\label{stimefinali1}
\Nc_{\delta,s}(Z_{1})+\Nc_{2\delta-1,s}^{\gamma,\mathcal{O}_0}(Z_{2})&\leq 
C\Nc_{\delta,r,q}(V)\,,
\\ \label{stimefinali2}
|R|_{\rho_0,r_0,s}^{\gamma,\mathcal{O}_0} &\leq C
\Nc_{\delta,r,q}(V)\,,\\\label{stimefinali3}
\sup_{\vphi\in\T^d}\|\Phi^{\pm1}(\vphi)-{\rm Id}\|_{\L(H^s,H^{s-\delta})}
& \leq C
\Nc_{\delta,r,q}(V)\,,\\\label{stimefinali4}
\sup_{\vphi\in\T^d}\|\Phi^{\pm1}(\vphi)\|_{\L(H^s,H^{s})} &\leq 1+C
\Nc_{\delta,r,q}(V)\,.
\end{align}
\end{theorem}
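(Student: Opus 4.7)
\medskip

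\textbf{Plan of proof.} The strategy is a finite \emph{descent} scheme that drops the order of the $\varphi$-dependent perturbation by $1-\delta>0$ at each step, terminating after $N\sim(\rho_0+\delta)/(1-\delta)$ iterations once the remainder has become $\rho_0$-smoothing. At iteration $j\geq 0$ the working operator has the form
\begin{equation*}
\mathcal{F}_j=\omega\cdot\pa_\vphi+\ii\bigl(\Delta_g+Z_j+M_j(\vphi)+R_j(\vphi)\bigr),
\end{equation*}
with $Z_j$ Hermitian, $\vphi$-independent and commuting with $K_0$, $M_j\in \mathcal{A}_{m_j,r_j}$ Hermitian of decreasing order $m_j=\delta+j(\delta-1)$, and $R_j$ a Hermitian $\rho$-smoothing remainder; starting from $Z_0=0$, $M_0=V$, $R_0=0$. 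Each step produces a two-factor Lie transform $\Phi_j=\Phi_{T_j}\circ \Phi_{S_j}$, and the final map of the theorem is $\Phi:=\Phi_N\circ\cdots\circ\Phi_1$, with $Z:=Z_{N+1}$ and $R:=R_{N+1}+M_{N+1}$.

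\textbf{One descent step.} Substep~A (Weinstein averaging). Using the $2\pi$-periodic flow $e^{\ii t K_0}$ given by Colin de Verdi\`ere's construction, define
\[
\langle M_j\rangle:=\frac{1}{2\pi}\int_0^{2\pi}e^{-\ii tK_0}M_j\,e^{\ii tK_0}dt\in \mathcal{A}_{m_j,r_j},
\]
which commutes with $K_0$ by construction, and (using Lemma~\ref{propertiespdo}(iv)) is of the same order as $M_j$. Solve the homological equation $[\ii\Delta_g,S_j]=M_j-\langle M_j\rangle$ up to a smoothing error: writing both sides in the spectral basis of $K_0$, the block $(S_j)_{[k]}^{[k']}$ is obtained by dividing by $\lambda_k^2-\lambda_{k'}^2=(\lambda_k+\lambda_{k'})(\lambda_k-\lambda_{k'})$, whose modulus is bounded below by $c(k+k')$ since $\lambda_k\in \mathbb{N}+\lambda$ forces $|\lambda_k-\lambda_{k'}|\geq 1$ for $k\neq k'$; hence $S_j\in \mathcal{A}_{m_j-1,r_j}$ (no $\omega$-small divisors appear). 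Conjugation by $\Phi_{S_j}=e^{\ii S_j}$ via \eqref{operTotale4}--\eqref{operTotale5} replaces $M_j$ by $\langle M_j\rangle$ plus terms of the form $\mathrm{ad}_{\ii S_j}^p(\ii \Delta_g)$ and $\mathrm{ad}_{\ii S_j}^p(\ii M_j)$ that, by Lemma~\ref{propertiespdo}(ii), are of strictly lower order $2\delta-1+j(\delta-1)=m_{j+1}$, plus a $\rho_0$-smoothing correction absorbed into $R_{j+1}$.

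Substep~B (Time elimination). The averaged term $\langle M_j\rangle$ is still $\vphi$-dependent. Solve
\[
\omega\cdot\pa_\vphi T_j=\langle M_j\rangle-\widehat{\langle M_j\rangle}(0)
\]
by \eqref{estOmega-1}, which is where the diophantine condition $\omega\in \mathcal{O}_0$ of \eqref{zeroMel} enters and one loses $2\tau+1$ Sobolev derivatives in $\vphi$. Since $\langle M_j\rangle$ commutes with $K_0$, so do $T_j$ and $\widehat{\langle M_j\rangle}(0)$. Conjugation by $\Phi_{T_j}=e^{\ii T_j}$ kills the oscillating part and sets
\[
Z_{j+1}:=Z_j+\widehat{\langle M_j\rangle}(0),
\]
while producing further corrections of order $m_{j+1}$ (since $\mathrm{ad}_{\ii T_j}$ also lowers order by $1-\delta_j$), absorbed into $M_{j+1}$ and $R_{j+1}$. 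Hermitianity of the new generators $S_j, T_j$ follows from the Hermitianity of $M_j$ and the symmetry of the homological denominators, so by the remark after \eqref{commu} Hermitianity is propagated at every step.

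\textbf{Assembly and estimates.} The quantitative bounds \eqref{stimefinali1}--\eqref{stimefinali4} follow by induction using \eqref{algSeminorm}--\eqref{algSeminorm3} and \eqref{estOmega-1}: at each step the semi-norms of $S_j,T_j,M_{j+1},R_{j+1}$ are controlled by those of $M_j$, hence by $\Nc_{\delta,r,q}(V)$ under the smallness \eqref{smalloo}. The splitting $Z=Z_1+Z_2$ tracks the step at which each contribution first appears: $Z_1=\widehat{\langle V\rangle}(0)$ is the $j=0$ contribution, of order $\delta$ and \emph{independent of $\omega$} because only the averaging by the $K_0$-flow (no small divisors) is used to produce it; all subsequent contributions $\widehat{\langle M_j\rangle}(0)$ for $j\geq 1$ depend on $\omega$ through $T_{j-1}$ and are of order $m_j\leq 2\delta-1$, so they sum to $Z_2\in\mathcal{A}_{2\delta-1}^{\gamma,\mathcal{O}_0}$.

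\textbf{Main obstacle.} The delicate point is the well-posedness and quantitative control of the Lie transforms $\Phi_{S_j}=(e^{\ii\tau S_j})_{\tau=1}$ and $\Phi_{T_j}$ when $S_j,T_j$ have strictly positive order ($\delta$ at the first step). Unlike for skew-adjoint generators of nonpositive order one cannot invoke a bare functional calculus; one must appeal to Lemma~\ref{lemmaB1} of Appendix~\ref{AppC}, whose applicability rests on a smallness assumption of type \eqref{lemmaB111} that is translated, through \eqref{smalloo}, into the hypothesis $\gamma^{-1}\Nc_{\delta,r,p}(V)\leq\eps_*$. A secondary bookkeeping issue is that each Substep~B consumes $2\tau+1$ derivatives of $\vphi$-regularity and each Substep~A loses a fixed number of pseudo-differential semi-norm indices; both are tracked by setting $r_*=r_0+N(2\tau+1)$ and $p=p(S,\rho_0)$ large enough to absorb the iteration losses, which is possible since $N$ is finite.
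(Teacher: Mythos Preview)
Your overall architecture is the paper's: alternate Weinstein averaging with diophantine time elimination, iterate finitely many times, and identify $Z_1=\widehat{\langle V\rangle}(0)$ as the $\omega$-independent piece. The order sequence $m_j=(j+1)\delta-j$ and the reason for the $Z=Z_1+Z_2$ splitting are correct.

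There is, however, a genuine gap in Substep~A. You construct $S_j$ by blockwise division of the matrix coefficients of $M_j-\langle M_j\rangle$ by $\lambda_k^2-\lambda_{k'}^2$ and then assert $S_j\in\mathcal{A}_{m_j-1,r_j}$. That inference is unjustified: having the right matrix decay does not make an operator a classical pseudo-differential operator on $\mathtt{M}^n$, and the whole iteration depends on staying inside $\mathcal{A}$ (you repeatedly invoke Lemma~\ref{propertiespdo}, and the flow estimates of Lemma~\ref{lemmaB1} are stated for pseudo generators). The paper avoids this by never leaving the symbolic calculus: it sets $Y=\tfrac1{2\pi}\int_0^{2\pi}\tau\,(A-\langle A\rangle)(\tau)\,d\tau$, which is in $\mathcal{A}_{m_j,r_j}$ by property~(iv) and solves $\ii[K_0,Y]=A-\langle A\rangle$, and then takes $S_j=\tfrac14(YK_0^{-1}+K_0^{-1}Y)\in\mathcal{A}_{m_j-1,r_j}$, which solves the commutator equation with $K_0^2$ only up to the lower-order term $\tfrac14[[A,K_0],K_0^{-1}]$; that residue is absorbed into $A^+$. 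This is the Weinstein construction you name but do not actually use.

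A secondary confusion appears in your ``Main obstacle'' paragraph. By your own claim $S_j\in\mathcal{A}_{m_j-1}$ with $m_j-1\le\delta-1<0$, so $S_j$ is never of positive order; only $T_j\in\mathcal{A}_{m_j}$ is unbounded at early steps. More importantly, \eqref{lemmaB111} is not a smallness hypothesis at all: it is the structural condition $[S_2,K_0]=0$ together with self-adjointness, and it is precisely this commutation (which $T_j$ inherits from $\langle M_j\rangle$) that makes the flow $e^{\ii\tau T_j}$ bounded on every $H^s$ despite $T_j$ having positive order. The smallness in Lemma~\ref{lemmaB1} is \eqref{smallcondition}; the distinction matters because for a generic Hermitian pseudo of positive order the flow need not be $H^s$-bounded.
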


As explained in the introduction this Theorem will be 
demonstrated by an iterative procedure alternating 
an averaging step according to the periodic flow of $K_0$ 
(section \ref{ave})  and a step of eliminating 
the time dependence of the averaged term (section \ref{eli}). 
The iteration is detailed in section \ref{ite}.
\subsection{Averaging procedure}\label{ave}
For $A\in\A_{m}$, $m\in \mathbb{R}$, 
we denote for $\tau\in[0,2\pi]$
\begin{equation}
\label{Atau}
A(\tau):= e^{-\ii \t K_0} A e^{\ii \t K_0}
\end{equation}
and 
\begin{equation} \label{averA}
\langle A  \rangle 
:=\int_0^{2\pi} A(\tau) d\tau\,,
\end{equation} 
the average of $A$ along the  flow of $K_0$.\\
We notice that $\langle A  \rangle$ belongs to $\A_m$, commutes with $K_0$ 
and that if $A$ is Hermitian then
$\langle A\rangle$ is Hermitian. 
Let $\mathcal{O}\subset\mathcal{O}_0$ (see \eqref{zeroMel})
and consider the operator 
\begin{equation}\label{oper1}
G=\omega\cdot\pa_{\vphi}+\ii M(\vphi)
\qquad
M(\vphi):=  \Delta_g +  W+ A(\vphi) + R( \vphi)
\end{equation}
where $W \in \mathcal{A}^{\gamma, \mathcal{O}}_{ \delta}$, $0<\delta<1$,
is independent of time and commutes with 
$K_0$, $A \in \mathcal{A}^{\gamma, \mathcal{O}}_{ \delta',r}$ 
for some $\delta'\leq \delta $  and 
$R(\varphi)\in \mathcal{R}_{\rho,r}^{\gamma,\mathcal{O}}$
(see Def. \ref{Rrho}).  We also assume that $ M(\vphi)$ is Hermitian 
$\forall\,\vphi\in \mathbb{T}^{d}$.
 
 \begin{lemma} \label{lemmaS}
 Let $r>d/2$, $0<\delta<1$, 
 $\delta'\leq \delta$ there exists $S\in \mathcal{A}^{\gamma, \mathcal{O}}_{ \delta'-1,r}$ 
 such that for any
   $s> n/2$ and $\rho\geq 0$ 
 there exists $p=p(s,\rho)\geq1$, an increasing function of $s$, and $0<\e_{0}=\e_0(s,\rho) $ such that if
\begin{equation}\label{smalloolem}
\gamma^{-1}\Nc_{\delta',r,p}^{\gamma,\mathcal O}(A)\leq \e_0 \,,
\qquad \Nc_{\delta,r,p}^{\gamma,\mathcal O}(W)\leq 1
\end{equation}
 the symplectic change of variable $ \Phi_S= e^{\ii S(\vphi)}$ belongs to $ \L(H^s,H^s)  $  and we have
 \begin{align}
 G^+&:= \Phi_S \circ G\circ \Phi_S^{-1} 
 =  \omega\cdot\pa_{\vphi}+\ii M^{+}(\vphi)\label{nuovoL+} \\
 M^{+}(\vphi)&:=
 \Delta_g +  W+ \langle A(\vphi)\rangle +  A^+(\vphi) 
 +  R^+ (\vphi)\label{nuovoL+2} 
 \end{align}
where $\langle A(\vphi)\rangle$ is defined as in \eqref{averA},
$A^+ \in \mathcal{A}^{\gamma, \mathcal{O}}_{ \delta'-1,r-1}$  and 
$ R^+ \in \mathcal{R}_{\rho,r-1}^{\gamma,\mathcal{O}}$.
The operator $ M^{+}(\vphi)$ is Hermitian $\forall\, \vphi\in \mathbb{T}^{d}$.

\noindent
Moreover  there exists  
$C=C( s,\rho)$ such that 
\begin{align}
\Nc^{\gamma,\mathcal{O}}_{\delta'-1,r,s} (S)
&\le C \Nc^{\gamma,\mathcal{O}}_{\delta',r,p}(A)\,\label{stimaSS}\\
\sup_{\vphi\in \mathbb{T}^{d}} \|\Phi_S^\t(\vphi)\|_{\L(H^s,H^s)}^{\gamma,\mathcal{O}}
 &\le 1
 +C\Nc^{\gamma,\mathcal{O}}_{ \delta',r,p}(A)\,\label{stimaTTTT2}\\
 \sup_{\vphi\in \mathbb{T}^{d}}  \|\Phi_S^{\t}(\vphi)-{\rm Id}\|^{\gamma,\mathcal{O}}_{\L(H^s,H^{s})} 
 &\leq  C
\Nc^{\gamma,\mathcal{O}}_{\delta',r,p}(A)\, 
 \quad \forall \t \in [0,1]\,. \label{stimaTTTT3}
\\
\Nc^{\gamma,\mathcal{O}}_{ \delta'-1,r-1,s}(A^+) 
&\le C \Nc^{\gamma,\mathcal{O}}_{ \delta',r,p}(A)\, 
\label{stimaSS2}
\\
|R^{+}|_{\rho,r-1,s}^{\gamma,\mathcal{O}}
&\le  C  |R|^{\gamma,\mathcal{O}}_{\rho,r,s} 
+C\Nc^{\gamma,\mathcal{O}}_{\s, \delta',p}(A)\,.\label{stimaSS3}
\end{align}
\end{lemma}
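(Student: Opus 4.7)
The strategy is the classical Lie-series method. Writing $\Phi_{S}=e^{\ii S}$ and using \eqref{operTotale3}--\eqref{operTotale5}, the first-order expansion reads
\[
M^{+} = M + \ii[S,M] - \omega\cdot\partial_{\vphi}S + O(S^{2}).
\]
Since $\Delta_{g}$ is the leading operator in $M=\Delta_{g}+W+A+R$, the natural choice is to impose the homological equation $[\Delta_{g},S]=\ii(A-\langle A\rangle)$. By \eqref{dimensiondelta}, $\Delta_{g}$ and $K_{0}$ are simultaneously diagonalized by $\{\Phi_{k,j}\}$, and from \eqref{averA} one directly checks that $\langle A(\vphi)\rangle$ is block-diagonal in the $K_{0}$-spectral decomposition. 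Consequently $(A-\langle A\rangle)_{[k]}^{[k]}=0$ and we may set $S_{[k]}^{[k]}(\ell):=0$ for all $k,\ell$. For $k\neq k'$ the commutator $[\Delta_{g},S]$ acts on matrix elements as multiplication by $\Lambda_{k',j'}-\Lambda_{k,j}$, whose modulus is bounded below \emph{uniformly in $\omega$} by $c_{0}(k+k')$ thanks to \eqref{spec}. We may therefore set
\begin{equation*}
S_{(k,j),(k',j')}(\ell) := \frac{\ii\,A_{(k,j),(k',j')}(\ell)}{\Lambda_{k,j}-\Lambda_{k',j'}}, \qquad k\neq k'.
\end{equation*}
Dividing by $\Lambda_{k,j}-\Lambda_{k',j'}\sim k+k'$ gains one order of decay, so by the symbolic calculus on Zoll manifolds of \cite{BGMR2} one obtains $S\in\mathcal{A}^{\gamma,\mathcal{O}}_{\delta'-1,r}$ together with \eqref{stimaSS}. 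Hermiticity of $S$ follows from that of $A$, $\langle A\rangle$ and the reality of the denominator; the Lipschitz dependence on $\omega$ is inherited from $A$.

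Next I would substitute $S$ into the truncated Lie series \eqref{operTotale4}--\eqref{operTotale5}, choosing $q=q(\rho,\delta')$ large enough that every iterate $\mathrm{ad}^{p}_{\ii S}$ with $p\ge q$ produces an operator of order $\le-\rho$; this is possible because each commutator with $\ii S$ gains $2-\delta'>1$ orders by Lemma \ref{propertiespdo}(ii). At first order, $\ii[S,\Delta_{g}]$ cancels $A-\langle A\rangle$ by construction, while
\[
\ii[S,W]\ (\mathrm{ord}\le\delta+\delta'-2),\qquad \ii[S,A]\ (\mathrm{ord}\le2\delta'-2),\qquad -\omega\cdot\partial_{\vphi}S\ (\mathrm{ord}=\delta'-1)
\]
are all of order $\le\delta'-1$ and assemble into $A^{+}\in\mathcal{A}^{\gamma,\mathcal{O}}_{\delta'-1,r-1}$; the loss $r\to r-1$ comes from $-\omega\cdot\partial_{\vphi}S$ via \eqref{estOmega}. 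The commutator $\ii[S,R]$, the higher iterates with $p\ge q$, the Lie-series remainder, and all smoothing corrections produced by Remark \ref{rempseudo} are collected into $R^{+}\in\mathcal{R}^{\gamma,\mathcal{O}}_{\rho,r-1}$. The estimates \eqref{stimaSS2}--\eqref{stimaSS3} then follow by applying Lemma \ref{propertiespdo} and the smallness \eqref{smalloolem} term by term.

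Finally, since $\mathrm{ord}(S)=\delta'-1<0$ the operator $S(\vphi)$ is uniformly bounded on every $H^{s}$, so $\Phi_{S}^{\tau}=e^{\ii\tau S(\vphi)}$ is a well-defined one-parameter family of bounded operators; the bounds \eqref{stimaTTTT2}--\eqref{stimaTTTT3} follow from \eqref{smalloolem} and the flow estimates of Appendix \ref{AppC} (Lemma \ref{lemmaB1}) via a Neumann-series argument. Hermiticity of $M^{+}$ is immediate from the Hermiticity of $M$, $S$ and the Remark after \eqref{commu}. The main obstacle is the verification that the explicit matrix-element formula for $S$ above indeed defines a pseudo-differential operator in $\mathcal{A}^{\gamma,\mathcal{O}}_{\delta'-1,r}$ with semi-norms controlled by those of $A$: this is the content of the Zoll pseudo-differential calculus of \cite{BGMR2} on which the entire regularization step rests, and everything else is careful bookkeeping in the truncated Lie series.
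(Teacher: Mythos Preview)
Your overall architecture (Lie series, homological equation, collect lower-order terms, push the tail into a smoothing remainder) matches the paper, and your bookkeeping of orders and of the $r\to r-1$ loss via $\omega\cdot\partial_{\vphi}S$ is correct. The genuine divergence is in \emph{how} $S$ is built, and that is precisely where your proposal has a gap.

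The paper never defines $S$ by matrix elements. Instead it stays inside the pseudo-differential class throughout, using the Weinstein averaging trick: set
\[
Y=\frac{1}{2\pi}\int_{0}^{2\pi}\tau\,(A-\langle A\rangle)(\tau)\,d\tau,\qquad
S=\tfrac14\big(YK_{0}^{-1}+K_{0}^{-1}Y\big).
\]
Here $(A-\langle A\rangle)(\tau)=e^{-\ii\tau K_{0}}(A-\langle A\rangle)e^{\ii\tau K_{0}}$ is a pseudo-differential operator of order $\delta'$ by the Egorov-type property \eqref{ego}--\eqref{ego2}, so $Y\in\mathcal{A}^{\gamma,\mathcal O}_{\delta',r}$ and $S\in\mathcal{A}^{\gamma,\mathcal O}_{\delta'-1,r}$ by composition. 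Integration by parts (using the $2\pi$-periodicity of the $K_{0}$-flow) gives $\ii[K_{0},Y]=A-\langle A\rangle$, and then a direct computation yields
\[
\ii[K_{0}^{2},S]=A-\langle A\rangle-\tfrac14\big[[A,K_{0}],K_{0}^{-1}\big],
\]
so the homological equation is solved only \emph{up to a term of order $\delta'-2$}, which is simply absorbed into $A^{+}$. This is why the extra term $\tfrac{\ii}{4}[[A,K_{0}],K_{0}^{-1}]$ appears in the paper's formula for $A^{+}$.

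Your construction, by contrast, solves $[\Delta_{g},S]=\ii(A-\langle A\rangle)$ \emph{exactly} by dividing matrix entries by $\Lambda_{k,j}-\Lambda_{k',j'}$. The problem is the step you flag yourself: you claim that ``by the symbolic calculus on Zoll manifolds of \cite{BGMR2}'' this matrix formula defines an element of $\mathcal{A}^{\gamma,\mathcal O}_{\delta'-1,r}$. That is not what \cite{BGMR2} provides. The relevant input from \cite{BGMR2} is exactly property $(iv)$ (the Egorov-type stability under conjugation by $e^{\ii\tau K_{0}}$), and it is used in the paper's construction, not in yours. There is no general statement in \cite{BGMR2} asserting that inverting $\mathrm{ad}_{\Delta_{g}}$ on the off-block-diagonal part of a pseudo-differential operator returns a pseudo-differential operator; indeed the denominators $\Lambda_{k,j}-\Lambda_{k',j'}$ vary within blocks (they depend on $j,j'$, not just on $k,k'$), so the operation is not a simple symbolic one. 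The averaging formula for $S$ is precisely the device that circumvents this obstacle, producing a pseudo-differential $S$ \emph{by construction} rather than by an a posteriori argument. Once you replace your matrix definition of $S$ by the averaged one above, the remainder of your argument goes through essentially as written.
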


\begin{proof}
The idea comes from \cite{Wein}, \cite{Colin}  
and were extensively used in \cite{BGMR2}. It consists to average with respect to the flow of $K_0$ (see \eqref{averA}) which is periodic since its spectrum is included in $\N +\lambda$ (see \eqref{K0H0}).\\
Let us define $ Y= \frac{1}{2 \pi}\int_0^{2\pi} \t (A - \langle A\rangle )(\t) d\t $. 
Then $Y\in\A_{\delta',r}^{\gamma,\mathcal{O}}$ and by integration by parts we verify that $Y$ solves the homological equation
\begin{equation}\label{omo11}
i [ K_0, Y] = A- \langle A \rangle\,.
\end{equation}
Then we define
\begin{equation}\label{def:S}
 S = \frac{1}{4} (Y K_0^{-1} + K_0^{-1}Y)
 \end{equation}
and we note that $S\in\A_{\delta'-1,r}^{\gamma,\mathcal{O}}$ is a pseudo-differential operator of order $\delta' -1\leq 0$.
Moreover, by using Lemma \ref{propertiespdo}, we deduce the estimate
\eqref{stimaSS}.
By applying Lemma \ref{lemmaB1}
we obtain estimates 
\eqref{stimaTTTT2} and \eqref{stimaTTTT3} (see \eqref{flusso11}, \eqref{flusso12}). 
By an explicit computation we also get 
\begin{equation}\label{omo12}
i[K_0^2 , S] = A- \langle A \rangle - \frac{1}{4} \big[ [A, K_0], K_0^{-1}\big]\, .
\end{equation}

To study the conjugate of $L$ in \eqref{oper1} 
under the map $\Phi_{S}$ defined
as in \eqref{operTotale2} with $S$ in \eqref{def:S} 
we use the Lie expansions
\eqref{operTotale4} and \eqref{operTotale5} for some 
$q\in \mathbb{N}$ large to be chosen later.
Recalling the splitting \eqref{K0H0}-\eqref{H} we 
have by \eqref{operTotale4}
\begin{align}
\Phi_{S}\circ \ii M\circ\Phi_{S}^{-1}&\stackrel{\eqref{omo12}}{=}
\ii K_0^{2}+\ii Q_0+\ii W+\ii \langle A\rangle+ \frac{\ii}{4} \big[ [A, K_0], K_0^{-1}\big]\\&
-\ii [Q_0+W+A,\ii S]
+\sum_{j=2}^{q}\frac{1}{j!}{\rm ad}_{\ii S}^{j}(\ii \Delta_{g}+\ii W+\ii A)\\&+
\frac{1}{q!}\int_{0}^{1}(1-\tau)^{q}e^{\ii \tau S}{\rm ad}^{q+1}_{\ii S}(\ii \Delta_{g}
+\ii W+\ii A)e^{-\ii \tau S}d\tau\\
&+\ii\Phi_{S}\circ R\circ\Phi_{S}^{-1}\,.
\end{align}
Taking into account the
 time contribution  given by \eqref{operTotale5} we obtain 
 that the conjugate $\Phi_{S}\circ G\circ \Phi_{S}^{-1}$ has the form
 \eqref{nuovoL+}-\eqref{nuovoL+2}
where 
\begin{equation}\label{nuovoV}
\begin{aligned}
\ii A^+&=  \frac{\ii}{4} \big[[A, K_0], K_0^{-1}\big] 
 -\ii [Q_0+W+A, \ii S]\\ &
 + \sum_{j=2}^q \frac{1}{j!}{\rm ad}_{\ii S}^{j}(\ii \Delta_{g}+\ii W+\ii A)
-\sum_{p=1}^{q}\frac{1}{p!}{\rm ad}_{\ii S}^{p-1}(\ii  \omega\cdot\pa_{\vphi}S)
\end{aligned}
\end{equation}
and 
\begin{equation}\label{nuovoR}
\begin{aligned}
\ii R^+&=
\frac{1}{q!}\int_{0}^{1}(1-\tau)^{q}e^{\ii \tau S}{\rm ad}^{q+1}_{\ii S}(\ii \Delta_{g}
+\ii W+\ii A)e^{-\ii \tau S}d\tau\\
&+
\frac{1}{q!}\int_{0}^{1}(1-\tau)^{q}\Phi_{S}^{\tau}
{\rm ad}_{\ii S}^{q}(\ii  \omega\cdot\pa_{\vphi}S)\Phi_{S}^{-\tau}d\tau\,,
\\&+\ii \Phi_{S}\circ R\circ\Phi_{S}^{-1}
\end{aligned}
\end{equation}
We need to prove the bounds \eqref{stimaSS2}-\eqref{stimaSS3}.
We start by studying the remainder $R^{+}$ in \eqref{nuovoR}. To simplify the notation
we shall 
write $a \lesssim b$ to denote $a \leq Cb$ for some constant 
$C = C(s,\rho).$

Using 
the smallness condition \eqref{smalloolem},
we have that the 
third summand in \eqref{nuovoR} is a
$\rho$-smoothing operator satisfying \eqref{stimaSS3} by Lemma \ref{lemmaB3}.

By items $(ii), (iii)$ 
of Lemma \ref{propertiespdo} we have (up to smoothing remainder and for some $p$ depending on $s$ and $\rho$)
\begin{align}
\mathcal{N}^{\gamma,\mathcal{O}}_{\delta',r,s}\big({\rm ad}_{\ii S}(\ii \Delta_{g}+\ii W+\ii A)\big)
&+\mathcal{N}^{\gamma,\mathcal{O}}_{\delta'-1,r-1,s}(\omega\cdot\pa_{\vphi}S)\nonumber\\ &
\stackrel{\eqref{smalloolem}, \eqref{stimaSS}}{\lesssim}
\mathcal{N}^{\gamma,\mathcal{O}}_{\delta',r,p}(A)\,.\label{lekain}
\end{align}
By iterating the estimate above and using the smallness condition \eqref{smalloolem} we deduce,
for $1\leq j\leq q$ and for some $p$ depending on $s$, $\rho$, $q$,
\begin{align}
\mathcal{N}^{\gamma,\mathcal{O}}_{j \delta'-2(j-1),r,s}
\big({\rm ad}^{j}_{\ii S}(\ii \Delta_{g}+\ii W+\ii A)\big)&+
\mathcal{N}^{\gamma,\mathcal{O}}_{(j+1)\delta'-1-2j,r-1,s}
({\rm ad}_{\ii S}^{j}(\omega\cdot\pa_{\vphi}S))\nonumber\\
\lesssim \mathcal{N}^{\gamma,\mathcal{O}}_{\delta',r,p}(A)\,.
\label{lekain2}
\end{align}
The sequences $j \delta'-2(j-1)$ and $(j+1)\delta'-1-2j$ are decreasing since
$\delta'\leq 1$. Hence, by choosing $q$ large enough, the integrands in \eqref{nuovoR}
are $\rho$-smoothing operator (with arbitrary $\rho$)
conjugated by the flow $e^{\ii \tau S}$. Therefore 
by  Lemma \ref{lemmaB3} all the expressions in \eqref{nuovoR} are  smoothing remainders
satisfying \eqref{stimaSS3} for some $p$ depending on $s$ and $\rho$.

Let us now consider the terms in \eqref{nuovoV}.
First of all we have
\begin{align*}
\mathcal{N}^{\gamma,\mathcal{O}}_{ \delta'-2,r,s}(\big[[A, K_0], K_0^{-1}\big]) 
&\lesssim
\mathcal{N}^{\gamma,\mathcal{O}}_{  \delta',r, N_1}([A, K_0])
\mathcal{N}^{\gamma,\mathcal{O}}_{-1,N_1}(K_0^{-1}) \\
&\lesssim    \mathcal{N}^{\gamma,\mathcal{O}_0}_{ \delta',r,N}(A)
\mathcal{N}^{\gamma,\mathcal{O}}_{1,N}(K_0)
\mathcal{N}^{\gamma,\mathcal{O}}_{ -1,N}(K_0^{-1})\\
&\lesssim
\mathcal{N}^{\gamma,\mathcal{O}_0}_{\delta',r,N}(A)\,,
\end{align*}
for some constant $N_1\leq N\leq p$ depending only on $s,\rho$.
In the same way (recalling also \eqref{smalloolem}) we have
\begin{align*}
\mathcal{N}^{\gamma,\mathcal{O}}_{\delta+\delta'-2,r,s} ([Q_0+W+A, \ii S])
&\lesssim
\mathcal{N}_{\delta'-1,r,p}^{\gamma,\mathcal{O}}(S)\big(\mathcal{N}_{0,p}(Q_0)
+\mathcal{N}_{\delta,p}^{\gamma,\mathcal{O}}(W)\big)\\
&+\mathcal{N}_{\delta'-1,r,p}^{\gamma,\mathcal{O}}(S)
\mathcal{N}^{\gamma,\mathcal{O}}_{\delta',r,p}(A)
\\&\stackrel{\eqref{smalloolem}, \eqref{stimaSS}}{\lesssim}
\mathcal{N}_{\delta',r,p}^{\gamma,\mathcal{O}}(A)\,.
\end{align*}
The other summands in \eqref{nuovoV} can be estimated 
by using \eqref{lekain} and \eqref{lekain2}.
This proves the \eqref{stimaSS2}.
\end{proof}

\subsection{Time elimination}\label{eli}
Let us consider the operator $L^{+}$ in \eqref{nuovoL+}-\eqref{nuovoL+2} 
obtained after an average step (see Lemma \ref{lemmaS}).
The aim of this section is to eliminate the time dependence (i.e. the dependence with respect to $\vphi$) in the term
$ \langle A(\vphi)\rangle$ in \eqref{nuovoL+2}.
First we introduce the pseudo-differential operator $T=T(\vphi)$ defined as
\begin{align} \label{defT}
T (\vphi) = \sum_{ 0\neq l \in \mathbb{Z}^d}
 \frac{e^{\ii l\cdot\vphi }}{ \ii \omega \cdot l} \langle A(l) \rangle\,.
\end{align}
We have the following Lemma.
\begin{lemma}\label{omoequT}
Let $r\geq 5d/2+9/2$ 
and $\om\in\mathcal O_0$ (see \eqref{zeroMel}). 
Then
the operator $T$ in \eqref{defT} belongs to 
$\mathcal{A}^{\gamma, \mathcal{O}_0}_{\delta',r-(2\tau+1)}$ 
is Hermitian, commutes with the operator $K_0$. Moreover it  solves the equation
\begin{equation}\label{omeoequaTT}
 \langle A(\vphi) \rangle -  \omega \cdot \partial_{\vphi} T 
 =  \langle A(0) \rangle \,,
\end{equation}
and satisfies 
\begin{equation}\label{stimaTT}
\mathcal{N}^{\gamma,\mathcal{O}_0}_{\delta',r-(2\tau+1),s}(T)
\leq C\mathcal{N}^{\gamma,\mathcal{O}_0}_{\delta',r,p}(A)\,.
\end{equation}
Furthermore, setting $\Phi_{T}^{\tau}:=e^{\ii \tau T(\vphi)}$,
we have that for any $s>d/2$ there are
constants $C,p$ (depending only on $s$ and $\rho$ ) such that if \eqref{smalloolem} holds then
\begin{align}
\sup_{\vphi\in \mathbb{T}^{d}} \|\Phi_T^\t\|_{\L(H^s,H^s)}
 &\le 1
 +C\Nc^{\gamma,\mathcal{O}_0}_{ \delta',r,p}(A)\,
 \label{stimaTT2}\\
    \sup_{\vphi\in \mathbb{T}^{d}}  \|\Phi_T^{\t}-{\rm Id}\|_{\L(H^s,H^{s-\delta'})} &\leq  C
\Nc^{\gamma,\mathcal{O}_0}_{ \delta',r,p}(A)\, 
 \quad \forall \t \in [0,1]\,. \label{stimaTT3}
\end{align}
\end{lemma}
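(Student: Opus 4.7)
The plan is to verify in turn the homological equation, the pseudo-differential regularity of $T$, the Hermitian and commutation properties, and then the flow bounds \eqref{stimaTT2}--\eqref{stimaTT3}.

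First I would check \eqref{omeoequaTT} by direct differentiation in $\vphi$. Writing the Fourier expansion $\langle A(\vphi)\rangle = \sum_{l\in \Z^d} \langle A(l)\rangle e^{\ii l\cdot\vphi}$ and applying $\om\cdot\pa_\vphi$ term by term to \eqref{defT} gives
\begin{equation*}
\om\cdot\pa_\vphi T(\vphi) = \sum_{0\neq l\in\Z^d} e^{\ii l\cdot\vphi}\langle A(l)\rangle = \langle A(\vphi)\rangle - \langle A(0)\rangle,
\end{equation*}
which is exactly \eqref{omeoequaTT}. To place $T$ in $\A_{\delta',r-(2\tau+1)}^{\gamma,\mathcal O_0}$ I would interpret it as $T=(\om\cdot\pa_\vphi)^{-1}(\langle A\rangle - \langle A(0)\rangle)$ and combine two ingredients from Lemma \ref{propertiespdo}: item $(iv)$ applied inside the integral defining $\langle\cdot\rangle$ in \eqref{averA} yields $\langle A\rangle\in\A_{\delta',r}^{\gamma,\mathcal O_0}$ with seminorms controlled by those of $A$; item $(iii)$ together with \eqref{estOmega-1}, applied with $\alpha=\tau=d+1$ (the diophantine condition \eqref{zeroMel} implies \eqref{diodio} up to the factor four), then produces \eqref{stimaTT} at the cost of the advertised loss of $2\tau+1$ derivatives in the $\vphi$-variable. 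The hypothesis $r\ge 5d/2+9/2$ is designed precisely so that $r-(2\tau+1)>d/2$, as demanded by item $(iii)$.

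Next I would establish the algebraic properties. Since $A(\vphi)$ is Hermitian, its Fourier coefficients satisfy $A(l)^*=A(-l)$; the unitary flow $e^{\ii\t K_0}$ preserves adjoints, so $\langle A(l)\rangle^*=\langle A(-l)\rangle$. Substituting into \eqref{defT} and changing $l\to -l$ in the sum then yields $T^*=T$. Commutation with $K_0$ is automatic: for each fixed $l$ the average $\langle A(l)\rangle=\int_0^{2\pi}e^{-\ii\t K_0}A(l)e^{\ii\t K_0}d\t$ commutes with $e^{\ii sK_0}$ for every $s$ by the $2\pi$-periodicity of $\t\mapsto e^{\ii\t K_0}$ (recall \eqref{K0H0}), hence with $K_0$ itself; the property passes to the Fourier sum defining $T$.

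Finally, for the flow $\Phi_T^\t=e^{\ii\t T}$ I would invoke Lemma \ref{lemmaB1} in Appendix \ref{AppC}. Since $T\in\A_{\delta',r-(2\tau+1)}^{\gamma,\mathcal O_0}$ has order $\delta'<1$ and, by the previous step, its seminorms are bounded by $\Nc^{\gamma,\mathcal O_0}_{\delta',r,p}(A)$, which is small by \eqref{smalloolem}, the Cauchy problem $\pa_\t u=\ii T u$ is well-posed on every $H^s$; a short Neumann-type expansion $\Phi_T^\t=\mathrm{Id}+\ii\t T+\tfrac12(\ii\t T)^2+\dots$ combined with the action bound \eqref{action} then yields \eqref{stimaTT2}--\eqref{stimaTT3}. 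The main obstacle I anticipate is tracking the Lipschitz-in-$\om$ dependence through the operator $(\om\cdot\pa_\vphi)^{-1}$: the Lipschitz quotient introduces an extra power $|l|^\tau$ and an extra factor $\gamma^{-1}$, which is exactly why the total regularity loss is $2\tau+1$ rather than the naive $\tau+1$. Since this bookkeeping is already packaged inside \eqref{estOmega-1}, the argument reduces to invoking that estimate carefully and the rest is straightforward.
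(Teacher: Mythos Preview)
Your argument matches the paper's: verify \eqref{omeoequaTT} by Fourier expansion, obtain \eqref{stimaTT} from Lemma~\ref{propertiespdo}(iii)--(iv), check that $T$ inherits Hermitianity and commutation with $K_0$ from $\langle A\rangle$, and invoke Lemma~\ref{lemmaB1} for the flow. One point needs correction: the Neumann-type expansion you describe does \emph{not} give \eqref{stimaTT2} when $\delta'>0$, since $T^n$ has order $n\delta'$ and the series fails to converge in $\L(H^s,H^s)$. The actual mechanism in Lemma~\ref{lemmaB1}(ii) is the energy estimate coming from hypothesis~\eqref{lemmaB111}, namely $[T,K_0]=0$ and $T^*=T$ --- exactly the properties you verified --- which forces $\pa_\tau\|\Phi_T^\tau h\|_{H^s}^2=0$; drop the Neumann remark and simply cite \eqref{flusso1}--\eqref{flusso2}.
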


\begin{proof}
The operator $T$ is Hermitian and commutes with $K_0$ thanks to the properties of 
$\langle A\rangle$.
The fact that $T$ solves \eqref{omeoequaTT} is obtained by an explicit computation.
The bound \eqref{stimaTT} follows by item $(iii)$ of Lemma \ref{propertiespdo}.
Finally applying  Lemma \ref{lemmaB1} we obtain
 the estimates \eqref{stimaTT2}-\eqref{stimaTT3} (see \eqref{flusso1} and \eqref{flusso2}).
\end{proof}

In the following lemma we study how the operator $G^{+}$ in  \eqref{nuovoL+}-\eqref{nuovoL+2} 
changes under the map $\Phi_{T}^{\tau}$ defined by Lemma \ref{omoequT}.
We have to distinguish the 
cases  $\delta'$ strictly  positive or $\delta'$ less or equal zero.

\begin{lemma}\label{lemmaT} Let  $\delta'\leq 0$ and $r>2\tau+2+d/2$. Let us define
$\delta_{1}:=\delta+\delta'-1$ and $\Phi_{T}:=\Phi_{T}^{1}$. Then the conjugated operator
$G_{1}:=\Phi_{T}\circ G^{+}\circ\Phi_{T}^{-1}$  has the form
\begin{align}
 G_{1}&=  \omega\cdot\pa_{\vphi}+\ii M_1(\vphi)\label{nuovoL++} \\
 M_1(\vphi)&:=
 \Delta_g +  W_1 + A_1(\vphi) 
 +  R_1 (\vphi)\label{nuovoL++2} 
 \end{align}
where 
\begin{equation}\label{nuovoL++3}
W_1=W+ \int_{\mathbb{T}^{d}}\langle A(\vphi)\rangle d\vphi\,,
\end{equation}
is independent of $ \vphi\in \mathbb{T}^{d}$,
$A_1 \in \mathcal{A}^{\gamma, \mathcal{O}}_{\delta_{1},r-2\tau-2}$  
and 
$ R_1 \in \mathcal{R}_{\rho,r-2\tau-2}^{\gamma,\mathcal{O}}$.
The operator $M_1(\vphi)$ is Hermitian $\forall\, \vphi\in \mathbb{T}^{d}$.

\noindent
Moreover for any $s>d/2$ there exist $p=p(s,\rho)$ and  
$C=C( s,\rho)$ such that if \eqref{smalloolem} holds then
\begin{align}
\Nc^{\gamma,\mathcal{O}}_{ \delta_1,r-2\tau-2,s}(A_1) 
&\le C \Nc^{\gamma,\mathcal{O}}_{\delta',r,p}(A)\, 
\label{stimaTTT2}
\\
|R_1|_{\rho,r-2\tau-2,s}^{\gamma,\mathcal{O}}
&\le  C  |R|_{\rho,r,s}^{\gamma,\mathcal{O}}  +
   \Nc^{\gamma,\mathcal{O}}_{ \delta',r,p}(A)\,.     \label{stimaTTT3}
  \end{align}
\end{lemma}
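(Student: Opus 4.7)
The plan is to apply the Lie-series conjugation formulas \eqref{operTotale4}--\eqref{operTotale5} to $G^+$, with the generator $S$ replaced by the operator $T$ from Lemma \ref{omoequT}, truncated at some order $q \in \mathbb{N}$ to be chosen depending on $\rho$. One then collects the resulting terms into the pseudo-differential piece of order $\delta_1=\delta+\delta'-1$ and the $\rho$-smoothing remainder.

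The decisive algebraic identity is that the first-order time-derivative contribution $-\ii\omega\cdot\pa_{\vphi} T$ coming from \eqref{operTotale5} combines with the term $\ii\langle A(\vphi)\rangle$ in $\ii M^+$ to produce, by the homological equation \eqref{omeoequaTT}, the $\vphi$-independent piece $\ii\langle A(0)\rangle = \ii \int_{\T^d} \langle A(\vphi)\rangle d\vphi$. Together with $\ii W$ this yields $\ii W_1$ as in \eqref{nuovoL++3}, which commutes with $K_0$ since each Fourier coefficient $\langle A(l)\rangle$ does, by construction of the $K_0$-average \eqref{averA}.

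Next one identifies the order-$\delta_1$ pseudo-differential piece $A_1$ as the sum of the first-order commutators: ${\rm ad}_{\ii T}(\ii W)$, which by \eqref{algSeminorm2} is of order $\delta + \delta' - 1 = \delta_1$ (this is the leading new contribution); ${\rm ad}_{\ii T}(\ii\Delta_g) = -\ii[T,\Delta_g]$, which, using $[T,K_0]=0$ and the decomposition \eqref{lapla}, reduces to $-\ii[T,Q_0]$ of order $\delta' - 1 \le \delta_1$ (as $\delta \ge 0$); and the lower-order commutators ${\rm ad}_{\ii T}(\ii\langle A\rangle)$ and ${\rm ad}_{\ii T}(\ii A^+)$, of orders $2\delta' - 1$ and $2\delta' - 2$ respectively, both $\le \delta_1$ since $\delta' \le 0 < \delta$. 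Each further iterate ${\rm ad}_{\ii T}^p$ lowers the pseudo-differential order by at least $1 - \delta' \ge 1$; hence, choosing $q$ sufficiently large in terms of $\rho$, every remaining higher-order Lie term in \eqref{operTotale4}--\eqref{operTotale5}, as well as ${\rm ad}_{\ii T}(\ii R^+)$, can be absorbed into the $\rho$-smoothing remainder $R_1$ via Lemma \ref{lemmaB3} and the boundedness of the flow $\Phi_T^{\tau}$ given by \eqref{stimaTT2}.

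The quantitative estimates \eqref{stimaTTT2}--\eqref{stimaTTT3} then follow by iterating the algebra bound \eqref{algSeminorm2} together with the estimate \eqref{stimaTT} on $T$, using the smallness assumption \eqref{smalloolem} to sum the geometric-series-like contributions. The loss of $2\tau + 2$ derivatives in $\vphi$ accounts for the $2\tau + 1$ lost in \eqref{stimaTT} when solving the small-divisor equation defining $T$, plus one extra derivative from the occurrence of $\omega \cdot \pa_{\vphi} T$ in \eqref{operTotale5}. Hermiticity of $M_1$ is inherited from that of $T$ (Lemma \ref{omoequT}) and of $M^+$ through the Lie-expansion structure, as in the remark following \eqref{commu}. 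The main obstacle is careful order-counting against the order-two Laplacian: without the identity $[T, K_0^2] = 0$ the commutator $[T, \Delta_g]$ would naively be of order $\delta' + 1$, wrecking the scheme; it is precisely the fact that $T$ is built from the $K_0$-averaged symbols $\langle A(l)\rangle$ that enables the reduction to $[T,Q_0]$ and thus the whole argument.
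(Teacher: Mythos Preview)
Your approach matches the paper's: Lie expansion with generator $T$, use of $[T,K_0]=0$ to collapse $\Phi_T K_0^2 \Phi_T^{-1}$ to $K_0^2$ (equivalently, to reduce $[\ii T,\ii\Delta_g]$ to $[\ii T,\ii Q_0]$), the homological equation \eqref{omeoequaTT} to extract $\langle A(0)\rangle$, order-counting to split into $A_1$ and $R_1$, and Lemma~\ref{lemmaB3} for the flow-conjugated smoothing remainders. One small omission in your bookkeeping: the zeroth-order terms $A^+$ (of order $\delta'-1 \le \delta_1$) and $R^+$ from $M^+$ must themselves be placed into $A_1$ and $R_1$ respectively, not only their commutators with $\ii T$; the paper's explicit formulas \eqref{nuovoA+}--\eqref{nuovoR+} record them as the leading summands.
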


\begin{proof}
Notice that, since $T$
in \eqref{defT} commutes with $K_0$, then $\Phi_{T}\circ K_0^{2}\circ\Phi_{T}^{-1}=K_0^{2}$.
By using the expansions \eqref{operTotale4}-\eqref{operTotale5}
and since $T$ solves \eqref{omeoequaTT}  we have that the conjugate $L_1$ has
the form \eqref{nuovoL++}-\eqref{nuovoL++2} with $W_1$ as in \eqref{nuovoL++3}
and 
\begin{equation}\label{nuovoA+}
\begin{aligned}
\ii A_1&:=\ii A^{+}+\sum_{j=1}^{q}\frac{1}{j!}{\rm ad}^{j}_{\ii T}\big(\ii Q_0+ \ii W
+\ii \langle A(\vphi)\rangle
+\ii A^{+} \big)\\
&-
\sum_{j=2}^{q}\frac{1}{j!}{\rm ad}_{\ii T}^{j-1}(\ii  \omega\cdot\pa_{\vphi}T)\,,
\end{aligned}
\end{equation}
\begin{equation}\label{nuovoR+}
\begin{aligned}
\ii R_1&:=\ii \Phi_{T}\circ R^{+}\circ\Phi_{T}^{-1}\\
&+
\frac{1}{q!}\int_{0}^{1}(1-\tau)^{q}\Phi_{T}^{\tau}{\rm ad}_{\ii T}^{q+1}
\big(\ii Q_0+ \ii W+\ii \langle A(\vphi)\rangle+\ii A^{+} \big)
\Phi_{T}^{-\tau}d\tau\\
&+
\frac{1}{q!}\int_{0}^{1}(1-\tau)^{q}\Phi_{T}^{\tau}{\rm ad}_{\ii T}^{q}
(\ii  \omega\cdot\pa_{\vphi}T)\Phi_{T}^{-\tau}d\tau\,,
\end{aligned}
\end{equation}
and where $q\in \mathbb{N}$ is a large constant to be chosen later.
We now  estimate the different terms in \eqref{nuovoA+}, \eqref{nuovoR+}.

By \eqref{algSeminorm2}
 we have for some $p'=p'(s,\rho)$  
\[
\mathcal{N}^{\gamma,\mathcal{O}_0}_{2\delta'-1,r-2\tau-2,s}
({\rm ad}_{\ii T}(\ii \omega\cdot\pa_{\vphi}T))
\lesssim
\mathcal{N}^{\gamma,\mathcal{O}_0}_{\delta',r-2\tau-2,p'}(T)
\mathcal{N}^{\gamma,\mathcal{O}_0}_{\delta',r-2\tau-2,p'}
(\omega\cdot\pa_{\vphi}T)\,.
\]
On the other hand we have by \eqref{estOmega}
\[
\mathcal{N}^{\gamma,\mathcal{O}_0}_{\delta',r-2\tau-2,p'}
(\omega\cdot\pa_{\vphi}T)
{\lesssim}
\mathcal{N}^{\gamma,\mathcal{O}_0}_{\delta',r-2\tau-1,p'}(T)\,,
\]
thus using \eqref{stimaTT} we deduce
\begin{align}
\mathcal{N}^{\gamma,\mathcal{O}_0}_{2\delta'-1,r-2\tau-2,s}
({\rm ad}_{\ii T}(\ii \omega\cdot\pa_{\vphi}T))&{\lesssim}
\big(\mathcal{N}^{\gamma,\mathcal{O}_0}_{\delta',r,p'}(A)\big)^2
\stackrel{\eqref{smalloolem}}{\lesssim}
\mathcal{N}^{\gamma,\mathcal{O}_0}_{\delta',r,p'}(A)\,.\label{barcoin1}
\end{align}
Similarly we prove
\begin{equation}\label{barcoin2}
\begin{aligned}
\mathcal{N}^{\gamma,\mathcal{O}_0}_{\delta'-1,r-2\tau-2,s}&({\rm ad}_{\ii T}(Q_0))+
\mathcal{N}^{\gamma,\mathcal{O}_0}_{\delta+\delta',r-2\tau-2,s}
({\rm ad}_{\ii T}(W+\langle A(\vphi)\rangle))\\
&+
\mathcal{N}^{\gamma,\mathcal{O}_0}_{2\delta'-2,r-2\tau-2,s}({\rm ad}_{\ii T}(A^{+}))
\lesssim 
\mathcal{N}^{\gamma,\mathcal{O}_0}_{\delta',r,p'}(A)\,.
\end{aligned}
\end{equation}
Notice that, since $0<\delta<1$, the highest order pseudo-differential operator among the ones estimated in \eqref{barcoin1}, \eqref{barcoin2} is the one of order $\delta+\delta'-1<\delta'$. 
By the estimates above, by choosing the constant $q\in \mathbb{N}$ large enough with respect to $\rho$
and by reasoning as in the proof of Lemma \ref{lemmaS} one gets the estimates 
\eqref{stimaTTT2}, \eqref{stimaTTT3}. 
In particular, since $\delta'\leq 0$ we shall use 
Lemma \ref{lemmaB3} in order to estimate the conjugates of smooothing operator under the flow 
$\Phi_{T}^{\tau}$.
\end{proof}

In the next Lemma we study the case in which the generator $T$ of Lemma \ref{omoequT}
has order $\delta'>0$.

\begin{lemma}\label{lemmaTpositivo} 
Let  $0<\delta' \leq \delta$. 
Let us define
$\delta_{1}:=\delta+\delta'-1$ and $\Phi_{T}:=\Phi_{T}^{1}$. 
Fix moreover $r_1>d/2$ and $\rho_1\geq 0$ and assume $r>\max(r_1+d/2,2\tau+2+d/2)$ and $\rho\geq \rho_1+\delta' r_1+1$.
Then the conjugated operator
$G_{1}:=\Phi_{T}\circ G^{+}\circ\Phi_{T}^{-1}$  (see \eqref{nuovoL+}) 
has the form
\eqref{nuovoL++}, \eqref{nuovoL++2}, \eqref{nuovoL++3},
is independent of $ \vphi\in \mathbb{T}^{d}$,
$A_1 \in \mathcal{A}^{\gamma, \mathcal{O}}_{\delta_{1},r-2\tau-2}$  
and 
$ R_1 \in \mathcal{R}_{\rho_1,r_1}^{\gamma,\mathcal{O}}$.
The operator $M_1(\vphi)$ is Hermitian $\forall\, \vphi\in \mathbb{T}^{d}$.

\noindent
Moreover for any $s\in\R$ there exist $p=p(s,\rho)$ and  if \eqref{smalloolem} holds then
$C=C( s,\rho)$ such that 
\begin{align}
\Nc^{\gamma,\mathcal{O}}_{ \delta_1,r-2\tau-2,s}(A_1) 
&\le C \Nc^{\gamma,\mathcal{O}}_{\delta',r,p}(A)\, 
\label{stimaTTT2unbb}
\\
|R_1|_{\rho_1,r_1,s}^{\gamma,\mathcal{O}}
&\le  C  |R|_{\rho,r,s}^{\gamma,\mathcal{O}}  +
   \Nc^{\gamma,\mathcal{O}}_{ \delta',r,p}(A)\,.     \label{stimaTTT3unbb}
  \end{align}
\end{lemma}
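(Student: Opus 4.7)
The plan is to follow the proof of Lemma \ref{lemmaT} almost verbatim: apply the Lie expansions \eqref{operTotale4}-\eqref{operTotale5} to $G^+$ with the generator $T$ constructed in Lemma \ref{omoequT}, and use the homological equation \eqref{omeoequaTT} to cancel $\ii\langle A(\vphi)\rangle$ against $-\ii\omega\cdot\pa_\vphi T$, leaving behind only the $\vphi$-average $\ii\int_{\T^d}\langle A\rangle\,d\vphi$ which is absorbed into $W_1$ as in \eqref{nuovoL++3}. The resulting explicit formulas \eqref{nuovoA+} and \eqref{nuovoR+} for $A_1$ and $R_1$ have exactly the same structure as in the case $\delta'\le 0$; what changes is only the way one estimates them. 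Hermiticity of $M_1$ is inherited as in Lemma \ref{lemmaT}, since $T$ is Hermitian by Lemma \ref{omoequT} and conjugation by the Hermitian flow $\Phi_T$ preserves self-adjointness.

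The novelty in the regime $0 < \delta' \leq \delta$ is that $T \in \mathcal{A}^{\gamma,\mathcal{O}_0}_{\delta',\cdot}$ now has strictly \emph{positive} order. Nevertheless, since $\delta' < 1$, the homological gain per commutator step remains strictly positive: each ${\rm ad}_{\ii T}$ applied to a pseudo-differential operator of order $m$ (that is, to $Q_0$, $W$, $\langle A(\vphi)\rangle$, $A^+$, or to $\omega\cdot\pa_\vphi T$ itself) produces an operator of order $m + \delta' - 1 < m$, by item $(ii)$ of Lemma \ref{propertiespdo}. Iterating exactly as in \eqref{barcoin1}-\eqref{barcoin2}, using \eqref{stimaTT} and the smallness \eqref{smalloolem}, and truncating the series in \eqref{nuovoA+} at $q$ large enough that $q(1-\delta')$ exceeds any prescribed order, one obtains the pseudo-differential part $A_1$ of order $\delta_1 = \delta + \delta' - 1$ satisfying \eqref{stimaTTT2unbb}. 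The loss of two units in the $\vphi$-regularity index is accounted for exactly as in Lemma \ref{lemmaT}, coming from one derivative lost in \eqref{estOmega} and from the order counting in \eqref{stimaTT}.

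The hard part is estimating the smoothing remainder $R_1$ in \eqref{nuovoR+}, in particular the conjugate $\Phi_T \circ R^+ \circ \Phi_T^{-1}$ and the integral tail terms. Since $T$ has positive order $\delta'$, the flow $\Phi_T^\tau$ does not act boundedly from $H^s$ into itself: by \eqref{stimaTT2}-\eqref{stimaTT3} it loses roughly $\delta'$ derivatives per application. Moreover, recovering $H^{r_1}$-regularity in $\vphi$ of $\Phi_T^\tau R \Phi_T^{-\tau}$ costs an additional order of $\delta'$ for each $\vphi$-derivative, because $\omega\cdot\pa_\vphi$ does not commute with $\Phi_T^\tau$ and produces factors involving $T$. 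The net cost in the smoothing order is therefore of size $\delta' r_1$ plus a technical unit, which is precisely the hypothesis $\rho \geq \rho_1 + \delta' r_1 + 1$; this is the regime in which the analogue of Lemma \ref{lemmaB3} in the positive-order setting provides a bound of the form $|\Phi_T^\tau R^+ \Phi_T^{-\tau}|^{\gamma,\mathcal{O}}_{\rho_1,r_1,s} \lesssim |R^+|^{\gamma,\mathcal{O}}_{\rho,r,s}$. Combining this conjugation bound with the order counting of the previous paragraph applied to ${\rm ad}_{\ii T}^{q+1}$ and ${\rm ad}_{\ii T}^q(\omega\cdot\pa_\vphi T)$ --- choosing $q$ large in terms of $\rho, \rho_1, r_1, \delta'$ so that these iterated commutators are themselves smoothing of order $\geq \rho$ --- yields the estimate \eqref{stimaTTT3unbb} and completes the proof.
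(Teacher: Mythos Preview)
Your proposal is correct and follows the paper's approach exactly: reason as in Lemma \ref{lemmaT}, with the only change being the treatment of the conjugated remainder when the generator $T$ has positive order; the paper simply invokes Lemma \ref{lemmaB2} (rather than Lemma \ref{lemmaB3}) at that point, which is precisely the ``analogue of Lemma \ref{lemmaB3} in the positive-order setting'' you describe, and whose hypothesis $\rho=\rho_1+\delta' r_1+1$ matches the assumption here.

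One small inaccuracy worth correcting: the flow $\Phi_T^\tau$ \emph{does} act boundedly on $H^s$ --- this is \eqref{stimaTT2} (equivalently \eqref{flusso1}), and it relies crucially on the facts that $T$ is Hermitian and commutes with $K_0$ (hypothesis \eqref{lemmaB111}), so that $\partial_\tau\|\Phi_2^\tau h\|_{H^s}^2=0$. The loss of $\delta'$ derivatives enters only through the $\vphi$-derivatives of the flow, via \eqref{flusso3}, exactly as you say in the next sentence; this is what produces the $\delta' r_1$ contribution in the proof of Lemma \ref{lemmaB2}. Your overall accounting is therefore right, just drop the claim that $\Phi_T^\tau$ itself is unbounded on $H^s$.
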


\begin{proof}
One reasons as in the proof of Lemma \ref{lemmaT}.
The difference is in estimating the remainder $R_1$ in 
\eqref{nuovoR+}. Since the generator $T$ is of order $\delta'>0$
one has to apply Lemma \ref{lemmaB2}  (instead of Lemma \ref{lemmaB3})
which provides estimates \eqref{stimaTTT3unbb} isntead of the \eqref{stimaTTT3}.
\end{proof}

\subsection{Proof of Theorem \ref{thm:regolo}}\label{ite}
In this section we give the proof of 
Theorem \ref{thm:regolo} 
which is based on an iterative application of Lemmata of the previous section. 
Recalling \eqref{starting-point} we set
\[
G_0:=\mathcal{F}=\omega\cdot\pa_{\vphi}+\ii \Delta_{g}+\ii V\,.
\]
The operator $G_0$ above has the form \eqref{oper1} with
\begin{equation}\label{paramt0}
\mathcal{O}=\mathcal{O}_0,\quad W\equiv0\,,\quad R\equiv0\,,\quad A(\vphi)=V(\vphi)\,,\quad \delta'=\delta\,.
\end{equation}
Since $V$ is $C^\infty$,  $r>d/2$ can be chosen arbitrary large. We will chose it later in function of the order $\delta$, of the final regularity $r_0$ and the smoothness $\rho_0$ prescribed by \eqref{stimefinali2}.\\
Lemma \ref{lemmaS} prodides $p_1 (S)$ such that 
if $p\geq p_1(S)$ in \eqref{smalloo} then \eqref{smalloolem} holds for any $s\in[s_0,S]$. By  applying Lemma \ref{lemmaS} to $G_0$ we obtain a symplectic map
$\Phi_{S_0}$  such that (see \eqref{nuovoL+})
\begin{equation}\label{tripoli25}
\begin{aligned}
\widetilde{G}_0&:= \Phi_{S_0}\circ G_0\circ \Phi_{S_0}^{-1}=
\omega\cdot\pa_{\vphi}+\ii \Delta_{g}+\ii \langle V(\vphi)\rangle+\ii \widetilde{A}_{0}
+\ii \widetilde{R}_{0}\,,\\
&\widetilde{A}_{0} \in \mathcal{A}^{\gamma, \mathcal{O}}_{ \delta-1,r-1}\,, \qquad
 \widetilde{R}_{0} \in \mathcal{R}_{\rho,r-1}^{\gamma,\mathcal{O}}\,,
\end{aligned}
\end{equation}
with $\rho>0$ arbitrary to be chosen later
and where $\langle V(\vphi)\rangle$ is defined as in \eqref{averA}.
We apply Lemma \ref{lemmaTpositivo} to the operator given by \eqref{tripoli25}
with $\rho_1\rightsquigarrow \rho_0$ of Theorem \ref{thm:regolo}
and $r_1>d/2$ (to be chosen later)
provided that $\rho$ and $r$ are sufficiently large ($\rho>\rho_0+\delta r_1+1$ and $r>\max(r_1+d/2,2\tau+2+d/2)$).
Hence we obtain a symplectic map $\Phi_{T_0}$
such that
\[
\begin{aligned}
G_1&:=\Phi_{T_0}\circ \widetilde{G}_0\circ\Phi_{T_0}^{-1} 
=\Phi_{T_0}\circ\Phi_{S_0}\circ G_0\circ \Phi_{S_0}^{-1}\circ\Phi_{T_0}^{-1}\\
&=
\omega\cdot\pa_{\vphi}+\ii \Delta_{g}+\ii W_{1}+\ii A_{1}+\ii R_{1}
\end{aligned}
\]
with $W_{1}:=\int_{\mathbb{T}^{d}}\langle V(\vphi)\rangle d\vphi$, 
$A_{1}\in \mathcal{A}^{\gamma,\mathcal{O}_0}_{2\delta-1,r_1}$, 
$R_1\in \mathcal{R}^{\gamma,\mathcal{O}_0}_{\rho_0,r_1}$ and
estimates \eqref{stimaTTT2unbb} \eqref{stimaTTT3unbb} are satisfied for all $s\in[s_0,S]$ provided $p\geq p_2(p_1,S)$  depending on $p_1$ and $S$ (and still increasing in $S$).\\
We notice that $W_1$ is independent of $\vphi$ 
and of the parameters $\omega\in\mathcal{O}_0$.

Now we want to iterate this procedure.

Let us first consider the case\footnote{Actually Theorem \ref{thm:regolo} will be applied only in this case.} $0<\delta\leq 1/2$. Then  $2\delta-1\leq0$ and hence, form now on, 
we will apply iteratively 
Lemmata \ref{lemmaS} and \ref{lemmaT} (instead of Lemma \ref{lemmaTpositivo}).

We introduce the following parameters:
for $n\geq1$ we set
\begin{equation}\label{paramRegu}
 \delta_n = (n+1)\delta -n  \,,\qquad 
 r_n =r_1-n(2\tau+2) \,, \qquad q_n=q_0\circ q_{n-1}\,
 \end{equation}
 where $q_0(\cdot)=q_1(\cdot,S)$ is the composition of the two function $s\mapsto p(s)$ given by  Lemmata \ref{lemmaS} and \ref{lemmaT} and $q_1=p_2\circ p_1$. We notice that $q_n$ is an increasinf function of $S$.\\
 Then applying Lemmata \ref{lemmaS} and \ref{lemmaT} iteratively, 
 there exist symplectic changes of variables 
 $\{\Phi_{S_n}\}_n$ and $\{\Phi_{T_n}\}_n$ such that, 
 setting $\Phi_{n}:=\Phi_{T_n}\circ \Phi_{S_n}$, we have 
 \begin{equation}\label{iteroJam}
G_{n+1}:= \Phi_n\circ G_n \circ\Phi_n^{-1}
= \omega\cdot\pa_{\vphi}+\ii \Delta_{g} +\ii  W_{n+1} 
+ A_{n+1}  + R_{n+1} 
\end{equation}
 where $W_n$ is pseudo-differential operator 
 independent of $\vphi$ of order $\delta$ commuting with $K_0;$ 
 $A_n$ is pseudo-differential operator of order 
 $\delta_n;$  $R_n$ is $\rho_0-$smoothing operator. 
 Moreover, 
 by estimates \eqref{stimaSS2}, \eqref{stimaSS3} 
 and \eqref{stimaTTT2}, \eqref{stimaTTT3} we get
 \begin{align}\label{iteroJam2}
 \mathcal{N}^{\gamma,\mathcal{O}_0}_{  \delta,s}(W_{n}) +
\mathcal{N}^{\gamma,\mathcal{O}_0}_{  \delta_n,r_n,s}(A_{n}) +
|R_{n}|^{\gamma,\mathcal{O}_0}_{\rho_{0},r_n,s}
&\le C \mathcal{N}_{  \delta,r,q_n}(V)\quad \text{ for all }s\in[s_0,S]\,.
\end{align}
We perform $N=N(\rho_0,\delta)$ steps of this procedure 
in order to get $\delta_{N}=\delta -N(1-\delta)\leq-\rho_0$. 
This require to choose $r_1$ (and hence $r$) sufficiently large.
More precisely, we want $r_N\geq r_0$, the prescribed regularity, and thus in view of \eqref{paramRegu}
$r_{1}\geq N(2\tau+2)+r_0$. Then recalling that we need $r>\max(r_1+d/2,2\tau+2+d/2)$ we have to chose
$$r>\max(N(2\tau+2)+r_0+d/2,2\tau+2+d/2):=r_{*}(\delta,\rho_0,r_0)\,.$$
Moreover the constant $\rho$ appearing in \eqref{tripoli25}
should be chosen in such a way
\[
\rho\geq \rho_0+\delta r_1+1\geq \rho_0+2N(\tau+1)+r_0\,.
\]
%
Therefore the operator $G_{N}$, defined as in \eqref{iteroJam}, 
has the form \eqref{fine100} with $Z:=W_N$. 
We notice that 
$W_1:=\int_{\mathbb{T}^{d}}\langle V(\vphi)\rangle d\vphi\in\A_\delta$ 
is independent of $\om$ and that 
$W_N-W_1 \in\mathcal{A}^{\mathcal{O}_0}_{2\delta-1,r_1}$ 
which leads to the desired splitting $Z=Z_1+Z_2$. 
The bounds \eqref{stimefinali1}, \eqref{stimefinali2} 
follows by \eqref{iteroJam2} with $q=q_N$. 
The estimates \eqref{stimefinali3}, \eqref{stimefinali4} 
follows by composition and estimates 
\eqref{stimaTTTT2}, \eqref{stimaTTTT3}, 
\eqref{stimaTT2} and  \eqref{stimaTT3}.

The case $1/2\leq\delta<1$ requires to apply  Lemmata \ref{lemmaS} and \ref{lemmaTpositivo} iteratively to construct $\tilde A_n\in \A_{\tilde\delta_n,\tilde r_n}$ with $\delta_n=2\delta_{n-1}-1$ and $\delta_0=\delta$, until $\tilde\delta_n$ became negative. Then we can apply the second procedure using Lemmata \ref{lemmaS} and \ref{lemmaT} as in the previous case.

\section{KAM reducibility}\label{sec4}

In this section we will prove an abstract KAM Theorem for a matrix operator of the form
 \begin{equation}\label{start}
L_0=L_0(\omega;\vphi):=\omega\cdot\pa_{\vphi}+
\ii (\Delta_g+Z_0+R_0(\vphi))\,.
\end{equation}
To precise our hypothesis on $L_0$ we define the following constants
\begin{equation}\label{costanteB}
\begin{aligned}
&\mathtt{b}:= 6d+15n+23\,,\qquad \tau=d+1\,,\qquad \rho=5n+3\,,\\
&\gamma\in(0,1)\,,\quad 0\leq\ka\leq1\,\qquad  s_0>\frac{d+n}{2}\,,\quad S\geq s_0+\mathtt{b}\,.
\end{aligned}
\end{equation}

In this section we assume:
\begin{itemize}
\item[({\bf A1})] the matrix $Z_0$ is Hermitian, block diagonal, 
independent of $\vphi$ and Lipschitz in 
$\om\in \O\subseteq{\mathcal{O}_0}\equiv \mathcal{O}_0(\gamma,\tau)$ 
(see \eqref{zeroMel}).
Furthermore, denoting 
$(\mu^{(0)}_{k,j})_{j=1,\cdots,d_k}$ the eigenvalues 
of the block $(Z_0)_{[k]}^{[k]}$, 
we assume that there exists  $\ka\geq0$ 
such that (recall that $c_0$ is defined in \eqref{spec})
\begin{align}\label{mu1}
 |\mu^{(0)}_{k,j}(\om)|&\leq \frac {c_0}2 |k|,\quad \om\in\O,\ k\in\N,\ j=1,\cdots,d_k\,,\\
 \label{mu2}
 \|(Z_0)_{[k]}^{[k]}\|_{\L(L^2)}^{lip,\O}&\leq \frac14\langle k\rangle^{-\ka}\,,
 \quad  k\in\N\,.
\end{align}
\item[({\bf A2})] the operator $R_0$ is in 
$\mathcal{M}^{\gamma,\mathcal{O}}_{\rho,S}$  (see Def. \ref{1smooth})
and
is Hermitian.
\end{itemize}
Let us define
\begin{equation}\label{paramPiccolez}
\epsilon:=\gamma^{-1}
\bral R_0\brar^{\gamma,\mathcal{O}_0}_{\rho,s_0+\mathtt{b}}\,.
\end{equation}
We shall prove the following.

\begin{theorem}{\bf (Reducibility)}\label{thm:redu}
Let $s\in [s_0,S-\mathtt{b}]$.
There exist positive constants $\epsilon_0=\epsilon_0(s),C=C(s)$ 
such that, if
\begin{equation}\label{piccolopiccolo}
\epsilon\leq \epsilon_0\,,
\end{equation}
then there is a set $\mathcal{O}_{\epsilon}\subseteq\mathcal{O}$ with 
\begin{equation}\label{misura}
{\rm meas}(\mathcal{O}\setminus\mathcal{O}_{\epsilon})\leq C\gamma
\end{equation}
such that the following holds. For any $\omega\in \mathcal{O}_{\epsilon}$ there are
\begin{itemize}
\item[$(i)$]{\bf (Normal form)} a  matrix $Z_{\infty}=Z_0+\tilde Z_\infty$ 
with $\tilde Z_\infty\in \mathcal{M}_{\rho,s}^{\gamma,\mathcal{O}_{\epsilon}}$ 
which is $\vphi$-independent, Hermitian and block-diagonal;

\item[$(ii)$]{\bf (Conjugacy)} a bounded and  invertible  map 
$\Phi_{\infty}=\Phi_{\infty}(\omega,\vphi) : H^s
\to H^s$ 
such that for all $\vphi\in\T^d$, for all $\om\in\O_\epsilon$,
\begin{equation}\label{operatoreFinale}
L_{\infty}:=\Phi_{\infty}L_0\Phi_{\infty}^{-1}:=\omega\cdot\pa_{\vphi}+\ii (\Delta_g+Z_{\infty})\,.
\end{equation}
\end{itemize}
Moreover  we have
\begin{equation}\label{stimaMappain}
\sup_{\vphi\in \mathbb{T}^{d}}\|\Phi_{\infty}^{\pm1}(\vphi)-{\rm Id}\|_{\L( H^s;H^{s})}
\leq C
\gamma^{-1}\bral R_0\brar_{\rho,s+\mathtt{b}}^{\gamma,\mathcal{O}}\,, 
\quad \forall\,  \om\in\O_\epsilon\,,
\end{equation}
\begin{equation}\label{stimaZinfty}
\bral  \tilde Z_\infty\brar_{\rho,s}^{\gamma,\mathcal{O}_{\epsilon}}\leq
C
\bral R_0\brar_{\rho,s+\mathtt{b}}^{\gamma,\mathcal{O}}\,.
\end{equation}
\end{theorem}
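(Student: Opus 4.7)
The plan is to implement a KAM reducibility scheme in the decay-norm framework of \cite{BCP,BBHM}, adapted to the block structure inherited from the Zoll manifold. I will construct inductively, for $\nu\geq 0$, Hermitian generators $S_\nu\in \mathcal{M}_{\rho,s}^{\gamma,\mathcal{O}_\nu}$ and near-identity transformations $\Phi_\nu = e^{\ii S_\nu}$ such that
\[
\Phi_\nu L_\nu \Phi_\nu^{-1} = L_{\nu+1} = \omega\cdot\partial_\varphi + \ii(\Delta_g + Z_{\nu+1} + R_{\nu+1}),
\]
with $Z_{\nu+1} = Z_\nu + \langle (R_\nu)_{\rm bd}\rangle$ still Hermitian, block-diagonal and $\varphi$-independent, and with $R_{\nu+1}$ quadratically smaller than $R_\nu$ in the low $s$-decay norm. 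Cutoffs $N_\nu = N_0^{(3/2)^\nu}$ and shrinking Diophantine constants $\gamma_\nu\searrow\gamma/2$ will be used as in \cite{BBHM,BLM18}. Passing to the limit yields the claimed $\Phi_\infty = \cdots\circ\Phi_1\circ\Phi_0$ and $Z_\infty = \lim Z_\nu$.

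At step $\nu$, denote by $\mu^{(\nu)}_{k,j}(\omega)$ the eigenvalues of the Hermitian block $(\Delta_g+Z_\nu)_{[k]}^{[k]}$. I would solve the homological equation
\[
\omega\cdot\partial_\varphi S_\nu + \ii[\Delta_g+Z_\nu,\,S_\nu] = \Pi_{N_\nu}\bigl(R_\nu - \langle (R_\nu)_{\rm bd}\rangle\bigr)
\]
blockwise: in a basis diagonalizing each block $(\Delta_g+Z_\nu)_{[k]}^{[k]}$, for $(l,k-k')\neq 0$ or $j\neq j'$ and $|l|,|k-k'|\leq N_\nu$, one gets the scalar equation
\[
\ii\bigl(\omega\cdot l + \mu^{(\nu)}_{k,j} - \mu^{(\nu)}_{k',j'}\bigr)(S_\nu(l))_{k,j}^{k',j'} = (R_\nu(l))_{k,j}^{k',j'},
\]
solvable on the set $\mathcal{O}_{\nu+1}\subset\mathcal{O}_\nu$ where the denominator is $\geq \gamma_\nu(k+k')/(|l|^\tau\langle k-k'\rangle^\tau)$. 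The cluster gap $(k+k')$ comes for free from \eqref{spec} and from the fact that assumption (A1) keeps $|\mu^{(\nu)}_{k,j}|\leq \tfrac{3}{4}c_0 k$. Using the smoothing bound \eqref{ranma3} (of $R_0$ and, inductively, of $R_\nu$) together with \eqref{smoothesti}, the divisor loss $|l|^\tau N_\nu^\tau$ is absorbed by the smoothing index $\rho=5n+3$, producing
\[
\bral S_\nu\brar^{\gamma_\nu,\mathcal{O}_{\nu+1}}_{\rho,s} \lesssim_s \gamma_\nu^{-1} N_\nu^{2\tau+1+\kappa}\bral R_\nu\brar^{\gamma_\nu,\mathcal{O}_\nu}_{\rho,s}
\]
in both the low norm $s=s_0$ and the high norm $s=s_0+\mathtt{b}$.

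Conjugating via $\Phi_\nu$ and expanding in Lie series, the new remainder decomposes as
\[
R_{\nu+1} = (\mathrm{Id}-\Pi_{N_\nu})R_\nu + [\ii S_\nu, R_\nu - \langle (R_\nu)_{\rm bd}\rangle] + \text{(higher in }S_\nu\text{)},
\]
and the tame property \eqref{decayTame} of the $s$-decay norm (together with Lemma \ref{DecayAlg2} to handle the $\mathcal{D}^\beta$-factors) produces two estimates: a quadratic one in low regularity
\[
\bral R_{\nu+1}\brar^{\gamma_{\nu+1}}_{\rho,s_0} \lesssim N_\nu^{-\mathtt{b}} \bral R_\nu\brar^{\gamma_\nu}_{\rho,s_0+\mathtt{b}} + \gamma_\nu^{-1} N_\nu^{2(2\tau+1+\kappa)}\bigl(\bral R_\nu\brar^{\gamma_\nu}_{\rho,s_0}\bigr)^2,
\]
and a linear one in high regularity with polynomial growth in $\nu$. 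These are exactly \eqref{nuovoRem1}--\eqref{nuovoRem1bis} of the introduction, and with $\mathtt{b}=6d+15n+23$ large enough the standard geometric choice of $N_\nu$ gives a super-exponentially convergent low norm while $\bral R_\nu\brar_{\rho,s_0+\mathtt{b}}$ stays bounded; interpolation then yields convergence for all intermediate $s\in[s_0,S-\mathtt{b}]$ and the bounds \eqref{stimaMappain}, \eqref{stimaZinfty}.

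The hard part will be the measure estimate \eqref{misura} combined with propagating assumption (A1). The perturbed eigenvalues $\mu^{(\nu)}_{k,j}(\omega)$ are moving targets, so the Melnikov ``bad set'' at step $\nu$ is an arbitrary Lipschitz perturbation of a fixed resonant set. The point is that $Z_{\nu+1}-Z_\nu$ is block-diagonal of $\rho$-smoothing type, so $\|(Z_{\nu+1}-Z_\nu)_{[k]}^{[k]}\|^{lip}_{\mathcal{L}(L^2)}\lesssim \langle k\rangle^{-\rho}\bral R_\nu\brar_{\rho,s_0}^{\gamma_\nu}$, which (with a summable geometric bound) keeps the Lipschitz constant of $\mu^{(\nu)}_{k,j}$ bounded by $\tfrac12\langle k\rangle^{-\kappa}$ as long as $\kappa\leq\rho$ and $\kappa\geq 0$. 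The latter is exactly where $\kappa = 1-2\delta\geq 0$, i.e.\ $\delta\leq 1/2$, is used. Granting this Lipschitz control, for each fixed $(l,k,k',j,j')$ the resonant slice has $\omega$-measure $\lesssim \gamma(k+k')^{-1}|l|^{-\tau}\langle k-k'\rangle^{-\tau}$; summing the $\lesssim (kk')^{n-1}$ internal indices of each block pair and then over $l,k,k'$ converges thanks to the cluster-gap factor $(k+k')^{-1}$ and to $\tau=d+1$, yielding the bound $C\gamma$. This is the content of the future Lemma \ref{lem:misuro} and is the technical core of the proof.
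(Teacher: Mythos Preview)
Your overall KAM architecture (iterative Lie transforms, two-norm scheme with cutoffs $N_\nu=N_0^{(3/2)^\nu}$, block-diagonal normal form) matches the paper's, but there is a genuine gap in the small-divisor condition that makes the scheme fail as written.

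You impose $|\omega\cdot l + \mu^{(\nu)}_{k,j} - \mu^{(\nu)}_{k',j'}| \geq \gamma_\nu(k+k')/(|l|^\tau\langle k-k'\rangle^\tau)$ and claim the gain $(k+k')$ ``comes for free from \eqref{spec}''. That is only true when $l=0$ and $k\neq k'$; for $l\neq 0$ it is false. The decisive case is the same-cluster one $k=k'$, $l\neq 0$, $j\neq j'$, which you \emph{must} solve for (only the $l=0$ block goes into $Z_{\nu+1}$): there the divisor is $\omega\cdot l + (\mu^{(\nu)}_{k,j}-\mu^{(\nu)}_{k,j'})$ with $|\mu^{(\nu)}_{k,j}-\mu^{(\nu)}_{k,j'}|=O(1)$, hence of size $\lesssim |l|$, while your lower bound demands $\gtrsim \gamma_\nu k/|l|^\tau$. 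For $k\gg |l|^{\tau+1}/\gamma_\nu$ this fails for \emph{every} $\omega$, so $\mathcal{O}_{\nu+1}=\emptyset$. Your measure computation is also internally inconsistent: with your lower bound the bad slice for fixed $(l,k,k',j,j')$ has measure $\sim \gamma_\nu(k+k')/(|l|^{\tau+1}\langle k-k'\rangle^\tau)$, not $\gamma_\nu(k+k')^{-1}/\ldots$ as you write, and after multiplying by $d_kd_{k'}\lesssim (kk')^{n-1}$ the sum over $k,k'$ diverges.

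The paper goes the opposite way: it imposes a \emph{loss} in the small divisor, $|\omega\cdot l+\mu_{k,j}-\mu_{k',j'}|\geq 2\gamma/(N^\tau\langle k,k'\rangle^{2n+2})$ (see \eqref{calO+}), which makes the measure sum converge (Lemma \ref{lem:misuro}). The resulting loss $\langle k,k'\rangle^{3n+1}$ (and $\langle k,k'\rangle^{5n+3}$ for the Lipschitz variation) in the bound for $S_{[k]}^{[k']}(l)$ is then absorbed by the $\rho$-smoothing of $R_\nu$ with $\rho=5n+3$; this is precisely how $\rho$ is fixed (see \eqref{tripoli2}--\eqref{tripoli3}). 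A consequence is that $S_\nu$ is \emph{not} in $\mathcal{M}_{\rho,s}$ as you assert: one only controls $\bral S_\nu\brar_s$ and $\bral\mathcal{D}^{\pm\rho}S_\nu\mathcal{D}^{\mp\rho}\brar_s$ (estimate \eqref{gene}), and it is the latter that feeds into the commutator bound \eqref{allergia} for $R_{\nu+1}$. Once you replace your Melnikov condition by \eqref{calO+} and track $S_\nu$ in these norms, the rest of your outline goes through essentially as in Proposition \ref{ittero}.
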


\subsection{The KAM step}

The proof of Theorem \ref{thm:redu} is based on an iterative scheme. In this section we
show how to perform one step of the iteration.
We consider an operator 
\begin{equation}\label{opLpartenza}
L:=\omega\cdot\pa_{\vphi}+\ii(\Delta_g+Z+R)\,,
\end{equation}
where $Z=Z_{0}+Z_{2}$ is Hermitian with 
$Z_{0}$ satisfying  ({\bf A1}) and 
$Z_{2}\in \mathcal{M}_{\rho,s}^{\gamma,\mathcal{O}}$ 
for all $s\in[s_0, S]$ and for some 
$\mathcal{O}\subseteq\mathcal{O}_{0}$ (see \eqref{zeroMel}). 
The remainder $R$ satisfies ({\bf A2}), i.e. belongs to  $\mathcal{M}_{\rho,s}^{\gamma,\mathcal{O}}$ 
for all $s\in[s_0, S]$ and is Hermitian.

\subsubsection{Control of the small divisors}
Let us denote by $\mu_{k,j}$, $k\in \mathbb{N}$ and $j=1,\ldots, d_{k}$ (see \eqref{dimension}), 
the eigenvalues 
of the block $(\Delta_g+Z)_{[k]}^{[k]}$.
First of all we prove the following.
\begin{lemma}\label{lem:Lipeign}
One has
\begin{equation}\label{lipboundEigen}
\sup_{k\in\mathbb{N}}\langle k\rangle^{\ka}|\mu_{[k]}|^{lip,\mathcal{O}}\leq \frac14+
\bral Z_{2}\brar^{lip,\mathcal{O}}_{\ka,s_0}\,.
\end{equation}
\end{lemma}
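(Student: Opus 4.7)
The plan is to combine classical Lipschitz perturbation theory for Hermitian eigenvalues with the hypothesis (A1) on $Z_0$ and Definitions \ref{decayNorm}--\ref{1smooth} for $Z_2$. The block $(\Delta_g+Z)_{[k]}^{[k]}$ acts on the finite-dimensional space $E_k$ and is Hermitian for each $\omega\in\mathcal{O}$. Since $\Delta_g$ has no $\omega$-dependence, the whole Lipschitz variation in $\omega$ comes from $Z_{[k]}^{[k]}$. Labelling the eigenvalues $\mu_{k,j}$ in increasing order (which yields a Lipschitz selection for Hermitian matrices), Weyl's inequality gives, for every $j=1,\dots,d_k$ and every $\omega_1,\omega_2\in\mathcal{O}$,
$$
|\mu_{k,j}(\omega_1)-\mu_{k,j}(\omega_2)|\leq \|Z_{[k]}^{[k]}(\omega_1)-Z_{[k]}^{[k]}(\omega_2)\|_{\mathcal{L}(L^2)}.
$$
Dividing by $|\omega_1-\omega_2|$ and taking the supremum gives $|\mu_{[k]}|^{lip,\mathcal{O}}\leq \|Z_{[k]}^{[k]}\|^{lip,\mathcal{O}}_{\mathcal{L}(L^2)}$.

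Splitting $Z=Z_0+Z_2$ and using the triangle inequality then yields
$$
\langle k\rangle^{\ka}|\mu_{[k]}|^{lip,\mathcal{O}}\leq \langle k\rangle^{\ka}\|(Z_0)_{[k]}^{[k]}\|^{lip,\mathcal{O}}_{\mathcal{L}(L^2)}+\langle k\rangle^{\ka}\|(Z_2)_{[k]}^{[k]}\|^{lip,\mathcal{O}}_{\mathcal{L}(L^2)}.
$$
By hypothesis \eqref{mu2} of (A1) the first summand is bounded by $\frac14$, which accounts for the $\frac14$ in the claim.

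For the second summand, I would translate the block estimate into the $(\ka,s_0)$-decay norm. Since $\mathcal{D}$ acts as multiplication by $\lambda_k$ on the $k$-th block and $\lambda_k\sim\langle k\rangle$ by \eqref{equiweight}, one has $(\mathcal{D}^{\ka}Z_2)_{[k]}^{[k]}(0)=\lambda_k^{\ka}(Z_2)_{[k]}^{[k]}(0)$. Extracting the contribution $l=0$, $h=|k-k|=0$ from the sum defining $\bral\cdot\brar_{s_0}$ in Definition \ref{decayNorm} gives
$$
\lambda_k^{\ka}\|(Z_2)_{[k]}^{[k]}(0)\|^{lip,\mathcal{O}}_{\mathcal{L}(L^2)}\leq \bral\mathcal{D}^{\ka}Z_2\brar^{lip,\mathcal{O}}_{s_0}\leq \bral Z_2\brar^{lip,\mathcal{O}}_{\ka,s_0},
$$
the last step being the definition of $\bral\cdot\brar_{\ka,s_0}$. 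Taking the supremum over $k\in\mathbb{N}$ and combining this with the previous paragraph yields the desired inequality. The only nontrivial ingredient is the Lipschitz stability of eigenvalues of Hermitian matrices, which is classical; everything else is bookkeeping between operator-norm bounds and the matrix $s$-decay norms, so I anticipate no genuine obstacle.
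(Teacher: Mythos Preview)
Your proposal is correct and follows essentially the same route as the paper: Lipschitz stability of Hermitian eigenvalues (the paper quotes Corollary~A.7 of \cite{FG1}, you invoke Weyl's inequality directly), then the splitting $Z=Z_0+Z_2$, hypothesis \eqref{mu2} for the first piece, and reading off the diagonal block from the $(\ka,s_0)$-decay norm for the second. The only cosmetic difference is that you spell out the extraction of the $l=0$, $h=0$ term from $\bral\mathcal{D}^{\ka}Z_2\brar_{s_0}$, whereas the paper states the resulting inequality in one line.
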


\begin{proof}
By Corollary $A.7$ in \cite{FG1} the Lipschitz variation of the eigenvalues of an Hermitian matrix
is controlled by the Lipschitz variation of the matrix.
Then, in view of hypothesis ({\bf A1}), we get
$$|\mu_{[k]}|^{lip,\mathcal{O}_0}\leq  \|(Z_0)_{[k]}^{[k]}\|_{\L(L^2)}^{lip,\O}+\|(Z_{2})_{[k]}^{[k]}\|_{\L(L^2)}^{lip,\mathcal{O}_0}\leq \langle k\rangle^{-\ka}(\frac14+\bral Z_{2}\brar^{lip,\mathcal{O}_0}_{\ka,s_0})$$
and the \eqref{lipboundEigen} follows.
\end{proof}

We define 
the set $\mathcal{O}_{+}\subseteq \mathcal{O}$ 
of parameters $\o$ for which we 
have a good control of the small divisors. 
We set, for $N\geq 1$,
\begin{equation}\label{calO+}
\begin{aligned} 
\mathcal{O}_{+}\equiv\mathcal{O}_{+}(\g,N)&:=\Big\{\omega\in \mathcal{O}\, : \, 
|\omega\cdot l+\mu_{k,j}
- \mu_{k',j'}|
\geq \frac{2\gamma}{N^{\tau}\langle k,k'\rangle^{2n+2}}\,,\\
&\qquad\quad |l|\leq N,\quad k,k'\in\N\,, \quad j=1,\ldots,d_k\,, \\
&\qquad \qquad\quad\quad   j'=1,\ldots, d_k' \,,
\quad (l,k,k')\neq (0,k,k)
\Big\}\,.
\end{aligned}
\end{equation}
We have the following.
\begin{lemma}\label{lem:misuro} Assume that 
\begin{equation}\label{uffauffa}
\bral {Z}_{2}\brar_{\ka,s_0+\mathtt{b}}^{\gamma,\mathcal{O}}\leq \gamma/8
\end{equation}
for some $0<\gamma\leq \frac{c_0}5$ (see \eqref{spec}) then 
we have
\begin{equation}\label{misuro}
{\rm meas}\big(\mathcal{O}\setminus \mathcal{O}_{+}(\g,N)\big)\leq 
C\gamma N^{-1}
\end{equation}
for some constant $C>0$ depending only on $d$.
\end{lemma}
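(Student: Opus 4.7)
\medskip
\noindent
\textbf{Plan of proof.} The plan is to write $\mathcal{O}\setminus\mathcal{O}_+(\gamma,N)$ as a union of ``resonant'' sets, show that the sets indexed by $l=0$, $k\neq k'$ are empty thanks to the eigenvalue separation, and bound the remaining sets by a Lipschitz sub-level-set estimate. Concretely, set
\[
Q_{l,k,j,k',j'}:=\Big\{\omega\in\mathcal O :\, |\omega\cdot l+\mu_{k,j}(\omega)-\mu_{k',j'}(\omega)|<\frac{2\gamma}{N^\tau\langle k,k'\rangle^{2n+2}}\Big\},
\]
so that $\mathcal O\setminus\mathcal O_+(\gamma,N)$ is the union of these sets over $|l|\le N$, $k,k'\in\mathbb N$, $j=1,\dots,d_k$, $j'=1,\dots,d_{k'}$, $(l,k,k')\neq(0,k,k)$.

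First I would dispose of the case $l=0$, $k\neq k'$. By Weyl's inequality for Hermitian matrices together with \eqref{mu1} and the observation (consequence of \eqref{uffauffa} and Definition \ref{1smooth}) that $\|(Z_2)_{[k]}^{[k]}\|_{\L(L^2)}\le\gamma/8$, one has $|\mu_{k,j}(\omega)-\Lambda_{k,j}|\le c_0|k|/2+\gamma/8$ uniformly in $\omega\in\mathcal O$. Combined with the Laplacian separation \eqref{spec}, for $k\neq k'$ this gives
\[
|\mu_{k,j}-\mu_{k',j'}|\ge c_0(k+k')-\tfrac{c_0(k+k')}{2}-\tfrac{\gamma}{4}\ge \tfrac{c_0(k+k')}{4},
\]
which, since $k+k'\ge 1$ and $\gamma\le c_0/5$, strictly exceeds the threshold $2\gamma/(N^\tau\langle k,k'\rangle^{2n+2})$; hence $Q_{0,k,j,k',j'}=\emptyset$.

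Second, for $l\neq0$ I would use Lemma \ref{lem:Lipeign} together with \eqref{uffauffa} (which yields $\bral Z_2\brar_{\kappa,s_0}^{lip,\mathcal O}\le 1/8$) to get $|\mu_{[k]}|^{lip,\mathcal O}\le \tfrac{3}{8}\langle k\rangle^{-\kappa}\le 3/8$, so the function $f(\omega):=\omega\cdot l+\mu_{k,j}(\omega)-\mu_{k',j'}(\omega)$ has Lipschitz constant along the unit vector $l/|l|$ at least $|l|-3/4\ge |l|/4$ (using $|l|\ge1$ integer). Slicing along this direction gives a one-dimensional sub-level set of length $\le 8\gamma/(|l|N^\tau\langle k,k'\rangle^{2n+2})$, and Fubini together with the boundedness of $\mathcal O\subset[1/2,3/2]^d$ yields
\[
\mathrm{meas}(Q_{l,k,j,k',j'})\le C\,\frac{\gamma}{|l|\,N^\tau\,\langle k,k'\rangle^{2n+2}}.
\]

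Finally I would sum. Using $d_k\le k^{n-1}$ and the elementary inequality $d_kd_{k'}\langle k,k'\rangle^{-(2n+2)}\le C\,\max(k,k')^{-4}$, both sums
\[
\sum_{k\neq k'}\sum_{j,j'}\frac{1}{\langle k,k'\rangle^{2n+2}},\qquad \sum_{k}\sum_{j,j'}\frac{1}{\langle k\rangle^{2n+2}}
\]
converge. Combined with $\sum_{0<|l|\le N}|l|^{-1}\le C N^{d-1}$ (case $d=1$ yields $C\log N$, which is even better) this produces
\[
\mathrm{meas}(\mathcal O\setminus\mathcal O_+(\gamma,N))\le C\,\gamma\,N^{d-1-\tau}=C\,\gamma\,N^{-2}\le C\,\gamma\,N^{-1},
\]
since $\tau=d+1$. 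The main obstacle is securing the directional Lipschitz bound $|l|-3/4\ge |l|/4$, but this follows cleanly from Lemma \ref{lem:Lipeign} and \eqref{uffauffa}; the rest is a straightforward counting argument exploiting the strong separation $\langle k,k'\rangle^{2n+2}$ built into the definition of $\mathcal O_+$.
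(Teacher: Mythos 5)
Your proposal is correct and follows essentially the same route as the paper: the same decomposition into resonant sets, the same use of \eqref{spec}, \eqref{mu1} and \eqref{uffauffa} to show the $l=0$, $k\neq k'$ sets are empty, the same Lipschitz control of the eigenvalues via Lemma \ref{lem:Lipeign}, and the same counting with $d_kd_{k'}\leq\langle k,k'\rangle^{2(n-1)}$ and $\tau=d+1$. The only (harmless) differences are that you carry out the one-dimensional sub-level-set estimate by hand via slicing instead of invoking Lemma 5.2 of \cite{FG1}, and that you retain the extra factor $|l|^{-1}$, which yields the slightly stronger bound $C\gamma N^{-2}$.
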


\begin{proof}
We write
\begin{equation*}
\mathcal{O}\setminus \mathcal{O}_{+}=\bigcup_{\substack{l\in \mathbb{Z}^{d}
, |l|\leq N \\ k,k'\in \mathbb{N}\\ (\ell,k,k')\neq(0,k,k)}}\bigcup_{\substack{j=1,\ldots,d_{k}\\
j'=1,\ldots,d_{k'}}}R_{l,k,k'}^{j,j'}
\end{equation*}
where
\begin{equation*}
R_{l,k,k'}^{j,j'}:=\Big\{\omega\in \mathcal{O}\, : \, 
|\omega\cdot l+\mu_{k,j}
-\mu_{k',j'}|\leq\frac{2\gamma}{N^{\tau}\langle k,k'\rangle^{2n+2}}
\Big\}.
\end{equation*}

Notice that when $l=0$ and $k\neq k'$ then $R_{l,k,k'}^{j,j'}=  \emptyset$ for all $j,j'$. 
Indeed in  such case
 we get using \eqref{mu1}, \eqref{spec} and \eqref{uffauffa}
 $$|\lambda_{k}+\mu_{k,j}
- \lambda_{k'}+\mu_{k',j'}|
\geq \frac{c_0}{2}(k+k')-2\bral {Z}_{2}\brar_{\ka,s_0+\mathtt{b}}^{\infty,\mathcal{O}} \geq  \frac{c_0}{2}-\frac\gamma4\geq   2\gamma.$$
Let us now consider the case $l\neq0$. 
We give the  estimate of the measure of
a single \emph{bad} set 
$R_{l,k,k'}^{j,j'}$. Let us consider the Lipschitz  function
$$f(\o)=\omega\cdot l+\mu_{k,j}(\o)
- \mu_{k',j'}(\o)=\om\cdot l+g(\om)\,.$$ 
Using condition \eqref{uffauffa} we have that Lemma \ref{lem:Lipeign} 
implies that (recall that $l\neq0$)
 \[
 |g|^{lip,\mathcal O}\leq \frac12\,.
 \]
Then
Lemma $5.2$ in \cite{FG1} implies that
${\rm meas}(R_{l,k,k'}^{j,j'}) \leq \frac{C\gamma}{N^{\tau}\langle k,k'\rangle^{2n+2}}$ for some constant $C>0$ depending only on $d$.
Finally by \eqref{dimension}
we have that 
\[
d_{k}d_{k'}\leq \langle k ,k'\rangle^{2(n-1)}\,.
\]
Hence
\begin{equation*}
\begin{aligned}
{\rm meas}\big(\mathcal{O}\setminus\mathcal{O}_{+}\big)&\leq C
\sum_{\substack{l\in \mathbb{Z}^{d}
, 0<|l|\leq N\\ k,k'\in \mathbb{N}}}\sum_{\substack{j=1,\ldots,d_{k}\\
j'=1,\ldots,d_{k'}}}R_{l,k,k'}^{j,j'}\\&
{\leq }\ C
\sum_{\substack{l\in \mathbb{Z}^{d}
, 0<|l|\leq N\\ k,k'\in \mathbb{N}}}2\frac{\g}{N^{\tau}\langle k,k'\rangle^{4}} 
\leq    
CN^{-1}\gamma
\end{aligned}
\end{equation*}
since $\tau=d+1$.
\end{proof}

\subsubsection{Resolution of the Homological equation}
In this section we solve the following homological equation equation 
\begin{equation}\label{omoeq}
-\ii\omega\cdot\pa_{\vphi}S+\big[\ii S,\Delta_g+Z\big]+R={\rm Diag}R+Q
\end{equation}
where $Q$ is some remainder to be determined and 
\begin{equation}\label{newNorm}
\begin{aligned}
&{\rm Diag}R=\big(({\rm Diag}R)_{[k]}^{[k']}(l)
\big)_{l\in\mathbb{Z}^{d},k,k'\in\mathbb{N}}\,,\\
&({\rm Diag}R)_{[k]}^{[k']}(l):=0\, \;\;\text{ for }\;\; l\neq0\,, k,k'\in \mathbb{N}\;\;
{\rm or }\;\; l=0\,, k\neq k'\,,\\
&({\rm Diag}R)_{[k]}^{[k]}(0):= A_{[k]}^{[k]}(0)\,, \qquad {\rm otherwise}\,.
\end{aligned}
\end{equation}

 \begin{lemma}{\bf (Homological equation)}\label{omoequation} Let 
 $R\in \mathcal{M}^{\gamma,\mathcal{O}}_{\rho,s}$ for $s\in[s_0,S]$, 
 $\rho$ in \eqref{costanteB}.
 For any $\omega\in \mathcal{O}_+\equiv  \mathcal{O}_+(\gamma,N) $ 
 (defined in \eqref{calO+}) 
 there exist Hermitian matrices 
$S,Q$ solving equation \eqref{omoeq} and
 satisfying
 \begin{equation}\label{gene}
 \begin{aligned}
  \bral S\brar_{s}^{\gamma,\mathcal{O}_+} &\leq_{s}  
\frac{N^{2\tau+1}}{\gamma}
\bral R \brar_{\rho,s}^{\gamma,\mathcal{O}}\,, \\
 \bral\mathcal{D}^{\pm \rho}S\mathcal{D}^{\mp\rho}\brar_{s}^{\gamma,\mathcal{O}_+}
 &\leq_{s}  
\frac{N^{2\tau+\rho+1}}{\gamma}
\bral R \brar_{\rho,s}^{\gamma,\mathcal{O}}\,,
 \end{aligned}\qquad s\in [s_0,S]\,,
 \end{equation}
 \begin{equation}\label{ultrastima}
 \begin{aligned}
 \bral Q\brar_{\rho,s}^{\gamma,\mathcal{O}_+}&\leq_{s}  
\bral R \brar_{\rho,s+\mathtt{b}}^{\gamma,\mathcal{O}} 
N^{-\mathtt{b}}\,,\\
\bral Q\brar_{\rho,s+\mathtt{b}}^{\gamma,\mathcal{O}_+}&\leq_{s}  
\bral R \brar_{\rho,s+\mathtt{b}}^{\gamma,\mathcal{O}}\,,
 \end{aligned}\qquad
 s\in[s_0,S-\mathtt{b}]\,.
 \end{equation}
%
 \end{lemma}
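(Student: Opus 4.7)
The plan is to reduce \eqref{omoeq} to a family of scalar algebraic equations by diagonalizing $\Delta_g+Z$ on each diagonal block, and to implement the cutoff $\Pi_N$ of \eqref{cutOff} to split $R$ into a solved part and a smoothing remainder $Q$. Since $Z$ is block-diagonal (this structure is preserved along the iteration because ${\rm Diag}R$ is block-diagonal), $D_{[k]}:=(\Delta_g+Z)_{[k]}^{[k]}$ is Hermitian on $E_k$ with real eigenvalues $\mu_{k,j}$ and a real orthonormal eigenbasis $e_{k,j}$. Writing $S_{[k]}^{[k']}(l)=\sum_{j,j'}s_{j,j'}(l,k,k')\,e_{k,j}\otimes e_{k',j'}^{*}$ (and analogously for $R$), the Fourier transform of \eqref{omoeq} at mode $l$ decouples on each cell into $\ii(\om\cdot l+\mu_{k,j}-\mu_{k',j'})\,s_{j,j'}=-r_{j,j'}+({\rm Diag}R+Q)_{j,j'}$.

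We prescribe the solution accordingly. On $\{|l|\le N,\ |k-k'|\le N,\ (l,k,k')\neq(0,k,k)\}$ set $Q=0$ and $s_{j,j'}=r_{j,j'}/[\ii(\om\cdot l+\mu_{k,j}-\mu_{k',j'})]$; on the resonant cell $(l,k,k')=(0,k,k)$ set $S_{[k]}^{[k]}(0)=0$, so the equation is automatically satisfied (with $Q_{[k]}^{[k]}(0)=0$) because $({\rm Diag}R)_{[k]}^{[k]}(0)=R_{[k]}^{[k]}(0)$; on the complementary region set $S=0$ and $Q=R$. This gives $Q=({\rm Id}-\Pi_N)R$, and Hermiticity of both $S$ and $Q$ follows from that of $R$ thanks to the reality of $\mu_{k,j}$.

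The estimate on $S$ is the heart of the matter. The small divisor bound in \eqref{calO+} yields $|\om\cdot l+\mu_{k,j}-\mu_{k',j'}|^{-1}\le N^{\tau}\langle k,k'\rangle^{2n+2}/(2\gamma)$ on the solve region. Since the inverse divisor is a diagonal multiplier in an orthonormal basis of $E_k\otimes E_{k'}^{*}$, converting from this diagonal-in-basis estimate to the $\L(L^2)$-operator norm through Hilbert--Schmidt costs a factor $\sqrt{d_kd_{k'}}\lesssim\langle k,k'\rangle^{n-1}$ by \eqref{dimension}, giving $\|S_{[k]}^{[k']}(l)\|_{\L(L^2)}\lesssim(N^{\tau}/\gamma)\langle k,k'\rangle^{3n+1}\|R_{[k]}^{[k']}(l)\|_{\L(L^2)}$. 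The factor $\langle k,k'\rangle^{3n+1}$ is absorbed by extracting $\min(\lambda_k,\lambda_{k'})^{-\rho}$ from $R$ via the $\bral R\brar_{\rho,s}$-norm; since $\rho=5n+3$, this leaves the summable decay $\langle k,k'\rangle^{-(2n+2)}$. Summing $\langle l,|k-k'|\rangle^{2s}\|S_{[k]}^{[k']}(l)\|^2$ over the cutoff region produces $\bral S\brar_s^{sup,\mathcal{O}_{+}}\lesssim_s(N^{\tau}/\gamma)\bral R\brar_{\rho,s}^{sup,\mathcal{O}}$. For the Lipschitz part we differentiate the quotient in $\om$, use $|\om\cdot l|^{lip}\le|l|\le N$ together with the Lipschitz control on $\mu_{k,j}$ from Lemma~\ref{lem:Lipeign}, and invoke the squared small-divisor bound; this adds a factor $\sim N\cdot N^{\tau}/\gamma$, so after weighting by $\gamma$ we obtain the $N^{2\tau+1}/\gamma$ prefactor of \eqref{gene}. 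The conjugates $\mathcal{D}^{\pm\rho}S\mathcal{D}^{\mp\rho}$ only rescale each block by $(\lambda_k/\lambda_{k'})^{\pm\rho}$; on the support $|k-k'|\le N$ this is bounded by $(1+N/\min(\lambda_k,\lambda_{k'}))^{\rho}\lesssim N^{\rho}$, accounting for the extra $N^{\rho}$ in the second line of \eqref{gene}.

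The bound on $Q$ is then immediate from \eqref{smoothesti}. Indeed $\mathcal{D}^{\rho}$ is diagonal in the block index $k$ while $\Pi_N$ cuts off on $(l,k-k')$, so the two operators commute on both sides; one gets $\bral Q\brar_{\rho,s}^{\gamma,\mathcal{O}_{+}}\lesssim_s N^{-\mathtt{b}}\bral R\brar_{\rho,s+\mathtt{b}}^{\gamma,\mathcal{O}}$ by applying \eqref{smoothesti} with $\beta=\mathtt{b}$, while the high-regularity bound follows from $\|{\rm Id}-\Pi_N\|\le 1$. The main subtlety is that $D_{[k]}$ is not diagonal in the coordinate basis $\Phi_{[k]}$ used to define the decay norm, which forces the Hilbert--Schmidt detour and the resulting $\langle k,k'\rangle^{n-1}$ loss; this is precisely why the choice $\rho=5n+3$ of \eqref{costanteB} is calibrated to exceed $3n+1$, so that all block sums converge after the divisor loss is absorbed.
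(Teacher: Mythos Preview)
Your argument is essentially the paper's own proof: define $Q=({\rm Id}-\Pi_N)R$, diagonalize each Hermitian block $(\Delta_g+Z)_{[k]}^{[k]}$, solve the scalar equations on the cutoff region, pass from the sup-norm in the diagonalizing basis back to the operator norm at the cost of $\sqrt{d_kd_{k'}}\lesssim\langle k,k'\rangle^{n-1}$, and absorb the resulting $\langle k,k'\rangle$-powers using the $\rho$-regularizing norm of $R$; the $\mathcal{D}^{\pm\rho}S\mathcal{D}^{\mp\rho}$ bound is exactly \eqref{beebee2}.

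Two small corrections. First, the factor you extract from $R$ via $\bral R\brar_{\rho,s}$ is $\max(\lambda_k,\lambda_{k'})^{-\rho}$, not $\min(\lambda_k,\lambda_{k'})^{-\rho}$: it is $\max$ that matches $\langle k,k'\rangle$ and makes $\langle k,k'\rangle^{3n+1}\max(\lambda_k,\lambda_{k'})^{-\rho}$ summable. Second, the choice $\rho=5n+3$ is not merely ``calibrated to exceed $3n+1$'': the binding constraint is the Lipschitz estimate, where the squared small divisor together with the Hilbert--Schmidt loss produces $\langle k,k'\rangle^{2(2n+2)+(n-1)}=\langle k,k'\rangle^{5n+3}$, so one needs $\rho\ge 5n+3$ exactly (cf.\ \eqref{tripoli3} in the paper).
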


\begin{proof}
For $N>0$ we define  (recall \eqref{timeMat})
the matrix $\Pi_{N}R$ as
\begin{equation}\label{cutOffLem}
(\Pi_{N}R)_{[k]}^{[k']}(l):=\left\{
\begin{aligned} &R_{[k]}^{[k']}(l)\,,\quad l\in \mathbb{Z}^{d}\,, k,k'\in\mathbb{N}\,,\quad
\begin{aligned}
&|l|\leq N\,, \\ &|k-k'|\leq N\,,\end{aligned}
\\
&0\,,\qquad {\rm otherwise}
\end{aligned}
\right.
\end{equation}

Then we set 
\begin{equation}\label{restoUltra}
Q=(1-\Pi_{N})R\end{equation}
By Lemma \ref{DecayAlg2}, and since the regularity in $\vphi$ has been fixed at $r=\mathtt b$, one deduces the estimates \eqref{ultrastima}.
Moreover, recalling \eqref{newNorm},
we have that
 equation \eqref{omoeq} is equivalent to 
\begin{equation}\label{omoeq2}
\mathcal{G}(l,k,k',\omega)S_{[k]}^{[k']}(l)+
(\Pi_{N}R)_{[k]}^{[k']}(l)=0
\end{equation}
for any $l\in \mathbb{Z}^{d}$, $k,k'\in \mathbb{N}$ with $(l,k,k')\neq (0,k,k)$
where the operator $\mathcal{G}(l,k,k',\omega)$ is the linear operator
acting on complex $d_{k}\times d_{k'}$-matrices as
\begin{equation}\label{azioneG}
\mathcal{G}(l,k,k',\omega)A:=-\ii \Big[ \omega\cdot l 
+ \big( \Delta_g+Z\big)_{[k]}^{[k]}\Big]
A+\ii
 A
\big( \Delta_g+Z\big)_{[k']}^{[k']}\,.
\end{equation}
Now, since $\big( \Delta+Z\big)_{[k]}^{[k]}$ is Hermitian, there is 
a orthogonal $d_{k}\times d_{k}$-matrix $U_{[k]}$
such that 
\[
U_{[k]}^{T} \big( \Delta_g+Z\big)_{[k]}^{[k]} U_{[k]}
=D_{[k]}:={\rm diag}_{j=1,\ldots,d_{k}}\big(\mu_{k,j}\big)\,,
\]
where $\mu_{k,j}$ 
are the eigenvalues of the $k$-th block.
By setting
\begin{equation*}
\widehat{S}_{[k]}^{[k']}(l):=U_{[k]}^{T} S_{[k]}^{[k']}(l)U_{[k']}\,,
\qquad 
\widehat{R}_{[k]}^{[k']}(l):=U_{[k]}^{T} R_{[k]}^{[k']}(l)U_{[k']}
\end{equation*}
equation \eqref{omoeq2}
reads
\begin{equation}\label{omoeq3}
-\ii \Big( \omega\cdot l + D_{[k]}\Big)\widehat{S}_{[k]}^{[k']}(l)+\ii
\widehat{S}_{[k]}^{[k']}(l) D_{[k']}+(\Pi_{N}\widehat{R})_{[k]}^{[k']}(l)=0\,.
\end{equation}
For $\omega\in \mathcal{O}_{+}$ (see \eqref{calO+})
the solution of \eqref{omoeq3} is
 given by (recalling the notation \eqref{notazioneBlock})
\begin{equation}\label{solOmoeq}
\widehat{S}_{k,j}^{k',j'}(l):=\left\{\begin{aligned}
& \frac{ -\ii\widehat{R}_{k,j}^{k',j'}(l)}{\omega\cdot l
+\mu_{k,j}-\mu_{k',j,}}\,,
\qquad \begin{aligned}&|l|\leq N\,, \\ 
&\; |k-k'|\leq N\,,\end{aligned}\;\; (l,k,k')\neq (0,k,k)\,,\\
& 0\,, \qquad \qquad\qquad {\rm otherwise}\,.
\end{aligned}\right.
\end{equation}
Since $R$ is Hermitian 
it is easy to check that
also $S$ is Hermitian. Using the bound on the small divisors in \eqref{calO+}
we have that
\begin{equation}\label{stimaInfty}
|\widehat{S}_{k,j}^{k',j'}(l)|\leq \gamma^{-1}|\widehat{R}_{k,j}^{k',j'}(l)| N^{\tau}\langle k,k'\rangle^{2n+2}\,.
\end{equation}
Then, by denoting by $\|\cdot\|_{\infty}$ the sup-norm of a $d_{k}\times d_{k'}$-matrix,
we deduce
\begin{equation}\label{stimaInfty2}
\begin{aligned}
\|{S}_{[k]}^{[k']}(l)\|_{\L(L^2)}&=\|\widehat{S}_{[k]}^{[k']}(l)\|_{\L(L^2)}\leq \sqrt{d_{k}d_{k'}}\|\widehat{S}_{[k]}^{[k']}(l)\|_{\infty}
\\
&\stackrel{\eqref{stimaInfty}, \eqref{dimension}}{\leq}
\gamma^{-1}\|{R}_{[k]}^{[k']}(l)\|N^{\tau}\langle k,k'\rangle^{3n+1}\,.
\end{aligned}
\end{equation}
We now estimates the decay norm of the matrix 
$S$. We have 
\begin{equation}\label{tripoli2}
\begin{aligned}
\bral S\brar^{2}_{s}&\stackrel{\eqref{stimaInfty2}}{\leq} \gamma^{-2}N^{2\tau}
\sum_{l,h}\langle l,h\rangle^{2s}\sup_{|k-k'|=h}\|R_{[k]}^{[k']}(l)\|^{2}
\langle k,k'\rangle^{6n+2}\\
&\leq 
\gamma^{-2}N^{2\tau}
\sum_{l,h}\langle l, h\rangle^{2s}\sup_{\substack{|k-k'|=h \\ k\geq k'}
}\|(\mathcal{D}^{\rho}R)_{[k]}^{[k']}(l)\|^{2}
\langle k\rangle^{(6n+2-2\rho)}\\
&+
\gamma^{-2}N^{2\tau}
\sum_{l,h}\langle l, h\rangle^{2s}\sup_{\substack{|k-k'|=h \\ k< k'}
}\|(R\mathcal{D}^{\rho})_{[k]}^{[k']}(l)\|^{2}
\langle k'\rangle^{(6n+2-2\rho)}
\\&
\le_{s}\gamma^{-2}N^{2\tau}
\bral R\brar^2_{\rho,s}\,,
\end{aligned}
\end{equation}
provided that $\rho\geq 3n+1 $
which is true thanks to the choices in \eqref{costanteB}. 
Hence the bound \eqref{beebee2} in Lemma \ref{DecayAlg}
implies 
\begin{equation}\label{stimaInfty3}
\bral\mathcal{D}^{\pm\rho}S\mathcal{D}^{\mp\rho}\brar_{s}\le_{s}\gamma^{-1}N^{\tau+\rho}
\bral R\brar_{\rho,s}\,.
\end{equation}
To obtain \eqref{gene}, it remains to estimate the Lipschitz variation of the matrix $S$.
We reason as in the proof of item $(iii)$ of Lemma \ref{propertiespdo}.
To simplify the notation, for any $l\in \mathbb{Z}^{d}$, $k,k'\in \mathbb{n}$, $j=1,\ldots,d_{k}$
and $j'=1,\ldots,d_{k'}$, we set
\begin{equation}\label{tripoli}
d(\omega):=\ii (\omega\cdot l+\mu_{k,j}(\omega)-\mu_{k',j'}(\omega))\,,
\quad \forall \omega\in \mathcal{O}_{+}\,.
\end{equation}
By \eqref{solOmoeq} we have that, for any $\omega_1,\omega_2\in \mathcal{O}_{+}$
\[
\begin{aligned}
\widehat{S}_{k,j}^{k',j'}(\omega_1;l)-\widehat{S}_{k,j}^{k',j'}(\omega_2;l)&=
\frac{ \widehat{R}_{k,j}^{k'j'}(\omega_1;l)- \widehat{R}_{k,j}^{k'j'}(\omega_2;l)}{
d(\omega_1)}\\
&+\frac{d({\omega_1})-d({\omega_2})}{d(\omega_1)d(\omega_2)}\widehat{R}_{k,j}^{k',j'}(\omega_2;l)\,.
\end{aligned}
\]
Using the \eqref{lipboundEigen}, \eqref{mu2} we deduce
\[
\frac{|d(\omega_1)-d(\omega_2)|}{|\omega_{1}-\omega_2|}\lesssim |l|\,, \qquad \forall \omega_1,\omega_2\in \mathcal{O}_+\,,\;\; \omega_1\neq\omega_2\,.
\]
Therefore, recalling \eqref{calO+}, \eqref{suplip}
and reasoning as in \eqref{stimaInfty}, \eqref{stimaInfty2}, 
we get
\[
\begin{aligned}
\|{S}_{[k]}^{[k']}(l)\|^{lip,\mathcal{O}_{+}}_{\mathcal{L}(L^{2})}&\lesssim
\gamma^{-1}N^{\tau}\langle k,k'\rangle^{3n+1}\|R_{[k]}^{[k']}(l)\|^{lip,\mathcal{O}}\\
&+\gamma^{-2}N^{2\tau+1}\langle k,k'\rangle^{5n+3}\|R_{[k]}^{[k']}(l)\|^{sup,\mathcal{O}}\,.
\end{aligned}
\]
Finally, reasoning as in \eqref{tripoli2} and using \eqref{decayNorm2}, we deduce
\begin{equation}\label{tripoli3}
\bral S\brar^{lip,\mathcal{O}_{+}}_{s}\le_s \gamma^{-1} N^{\tau}
\bral R\brar_{\rho,s}^{lip,\mathcal{O}}+\gamma^{-2}N^{2\tau+1}
\bral R\brar_{\rho,s}^{sup,\mathcal{O}}\,,
\end{equation}
provided that $\rho\geq 5n+3$, which is true by \eqref{costanteB}.
Combining \eqref{tripoli2} and \eqref{tripoli3} (recall \eqref{decayLip})
we get the first bound in \eqref{gene}. The second one follows 
by  \eqref{beebee2} in Lemma \ref{DecayAlg}. 
\end{proof}

\begin{lemma}\label{stimaMappa}
There is $C(s)>0$ (depending only on $s\geq s_0$) such that, if
\begin{equation}\label{smallsmall}
\gamma^{-1} C(s)N^{2\tau+1}
\bral R\brar_{\rho,s_0}^{\gamma,\mathcal{O}}\leq \frac{1}{2}\,,
\end{equation}
then  the map 
$\Phi=e^{\ii S}={\rm Id}+\Psi$, with $S$ given by Lemma \ref{omoequation},
satisfies 
\begin{equation}\label{stimaMappa1}
\bral\Psi\brar_{s}^{\gamma,\mathcal{O}_+}\leq_s 
\gamma^{-1} N^{2\tau+1}
\bral R\brar_{\rho,s}^{\gamma,\mathcal{O}}\,.
\end{equation}
\end{lemma}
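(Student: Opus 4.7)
The plan is to expand $\Psi=e^{\ii S}-\mathrm{Id}=\sum_{p\geq1}\frac{(\ii S)^{p}}{p!}$, estimate each power $S^{p}$ with the tame product bound of Lemma \ref{lem:tame}, and then sum the series using the smallness hypothesis \eqref{smallsmall} together with the homological estimate of Lemma \ref{omoequation}.

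First I would invoke Lemma \ref{omoequation} (first line of \eqref{gene}) to obtain, for every $s\in[s_{0},S]$,
\[
\bral S\brar_{s}^{\gamma,\mathcal{O}_{+}}
\leq_{s}\gamma^{-1}N^{2\tau+1}\bral R\brar_{\rho,s}^{\gamma,\mathcal{O}}\,,
\]
and in particular, specialising at $s=s_{0}$, I can read the smallness condition \eqref{smallsmall} as $C(s_{0})\bral S\brar_{s_{0}}^{\gamma,\mathcal{O}_{+}}\leq 1/2$, where $C(s_{0})$ is the tame constant from Lemma \ref{lem:tame}. Next I prove by induction on $p\geq1$ the standard tame bound
\[
\bral S^{p}\brar_{s}^{\gamma,\mathcal{O}_{+}}
\leq_{s}p\,\bigl(C(s)\bral S\brar_{s_{0}}^{\gamma,\mathcal{O}_{+}}\bigr)^{p-1}
\bral S\brar_{s}^{\gamma,\mathcal{O}_{+}}\,,
\]
the inductive step being the direct application of the tame product inequality \eqref{decayTame} (which Lemma \ref{lem:tame} asserts also for the Lipschitz norm $\bral\cdot\brar_{s}^{\gamma,\mathcal{O}}$) to $S^{p}=S\cdot S^{p-1}$ and splitting the two terms between the low ($s_{0}$) and high ($s$) indices.

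With this in hand I sum the exponential series:
\[
\bral\Psi\brar_{s}^{\gamma,\mathcal{O}_{+}}
\leq\sum_{p\geq1}\frac{1}{p!}\bral S^{p}\brar_{s}^{\gamma,\mathcal{O}_{+}}
\leq_{s}\bral S\brar_{s}^{\gamma,\mathcal{O}_{+}}\sum_{p\geq1}
\frac{p\,\bigl(C(s)\bral S\brar_{s_{0}}^{\gamma,\mathcal{O}_{+}}\bigr)^{p-1}}{p!}\,.
\]
Under \eqref{smallsmall} the scalar series is majorised by a geometric-type sum bounded by an absolute constant (since $C(s)\bral S\brar_{s_{0}}^{\gamma,\mathcal{O}_{+}}\leq 1/2$), so that $\bral\Psi\brar_{s}^{\gamma,\mathcal{O}_{+}}\leq_{s}\bral S\brar_{s}^{\gamma,\mathcal{O}_{+}}$. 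Plugging back the estimate on $S$ from Lemma \ref{omoequation} yields precisely \eqref{stimaMappa1}. The only delicate point, though entirely routine in this framework, is checking that the inductive constant in the power estimate does not blow up in $p$; this is handled exactly as in Lemma 2.7 of \cite{BCP}, since one always isolates the factor carrying the high index $s$ and pays the tame constant $C(s)$ only once per multiplication, while the low-norm factor is kept under control by the smallness assumption.
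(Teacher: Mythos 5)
Your proof is correct and follows essentially the same route as the paper: the paper combines the first bound of \eqref{gene} with \eqref{smallsmall} to verify the smallness hypothesis \eqref{mito30} and then simply invokes Lemma \ref{well-well}, whose proof is exactly your power estimate $\bral S^{p}\brar_{s}\leq_{s}p\,(C(s)\bral S\brar_{s_0})^{p-1}\bral S\brar_{s}$ followed by summation of the exponential series. You have merely inlined that appendix lemma rather than citing it.
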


\begin{proof}
By \eqref{gene} and \eqref{smallsmall}
we have that
\begin{equation}\label{vulcano}
C(s)\bral S\brar_{s_0}^{\gamma,\mathcal{O}_+}
\leq
1/2\,,
\end{equation}
which implies the  \eqref{mito30}. Hence 
the \eqref{stimaMappa1} follows by Lemma \ref{well-well}.
\end{proof}

\subsubsection{The new remainder}

In this subsection we study the conjugate of the operator $L$
under the map $\Phi$ given by Lemma \ref{stimaMappa}.
We first define the new normal form $Z_{+}$ 
as
\begin{equation}\label{newZ}
Z_{+}:=Z+ {\rm Diag}R\,.
\end{equation}
We have the following.
\begin{lemma}{\bf (New normal form)}\label{newnormal}
We have that $Z_{+}$ in \eqref{newZ} is
$\vphi$-independent, Hermitian and block-diagonal,
and satisfies
\begin{equation}\label{diffNormal}
\bral Z_{+}-Z\brar_{\rho,s}^{\gamma,\mathcal{O}}\le_{s}
\bral R\brar_{\rho,s}^{\gamma,\mathcal{O}}\,.
\end{equation}
\end{lemma}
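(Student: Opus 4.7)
The statement has two parts: structural properties of $Z_+$, and the quantitative bound. Both follow directly from the definition \eqref{newNorm} of $\mathrm{Diag}\, R$ together with the matrix definition of the decay norm in \eqref{decayNorm2} and \eqref{decayNorm3}.

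First I would verify the three structural claims by inspection. Since $(\mathrm{Diag}\, R)_{[k]}^{[k']}(l)=0$ whenever $l\neq 0$, the matrix $\mathrm{Diag}\, R$ has only the zero Fourier mode and hence is $\vphi$-independent; since its only nonzero blocks are $(\mathrm{Diag}\, R)_{[k]}^{[k]}(0)=R_{[k]}^{[k]}(0)$, it is block-diagonal in the sense of Definition \ref{bloccodia}. Hermitianity follows because $R$ is Hermitian (hypothesis (\textbf{A2})) and the diagonal blocks of a Hermitian operator, evaluated at $l=0$, are themselves Hermitian (note $R(0)=\frac{1}{(2\pi)^d}\int_{\T^d}R(\vphi)\,d\vphi$ is Hermitian). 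Since $Z$ is already $\vphi$-independent, Hermitian and block-diagonal, the same properties transfer to $Z_+=Z+\mathrm{Diag}\,R$.

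For the bound, I would observe that $Z_+-Z=\mathrm{Diag}\,R$ and that, by \eqref{decayNorm2},
\[
\bral \mathrm{Diag}\,R\brar_s^2 \;=\; \sum_{l,h}\langle l,h\rangle^{2s}\sup_{|k-k'|=h}\|(\mathrm{Diag}\,R)_{[k]}^{[k']}(l)\|^2_{\mathcal L(L^2)}\;=\;\sup_{k}\|R_{[k]}^{[k]}(0)\|^2_{\mathcal L(L^2)}\;\le\;\bral R\brar_s^2,
\]
since only the $(l,h)=(0,0)$ term survives. To pass to the $\rho$-norm of Definition \ref{1smooth}, use that $\mathcal D$ is diagonal in the eigenblock decomposition, so that $\mathcal D^{\rho}(\mathrm{Diag}\,R)=\mathrm{Diag}(\mathcal D^{\rho}R)$ and $(\mathrm{Diag}\,R)\mathcal D^{\rho}=\mathrm{Diag}(R\mathcal D^{\rho})$. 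Applying the previous elementary bound to $\mathcal D^{\rho}R$ and $R\mathcal D^{\rho}$ yields
\[
\bral \mathrm{Diag}\,R\brar_{\rho,s}\;=\;\bral \mathcal D^{\rho}\mathrm{Diag}\,R\brar_s+\bral \mathrm{Diag}\,R\,\mathcal D^{\rho}\brar_s\;\le\;\bral \mathcal D^{\rho}R\brar_s+\bral R\,\mathcal D^{\rho}\brar_s\;=\;\bral R\brar_{\rho,s}.
\]
The Lipschitz variant of this computation is identical, done on $R(\omega_1)-R(\omega_2)$, and combined with the sup-bound yields the weighted inequality \eqref{diffNormal} with an implicit constant independent of $s$ (so in fact one could even drop the $\lesssim_s$).

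There is no real obstruction here: the whole lemma is a direct consequence of the definitions, and its role is merely to record that the ``diagonal part'' operation preserves all the structural properties needed to feed $Z_+$ into the next step of the KAM iteration, without degrading the $\rho$-smoothing decay norm.
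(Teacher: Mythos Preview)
Your proof is correct and follows the same approach as the paper, which simply writes ``It follows by construction.'' You have just spelled out in detail what that sentence means.
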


\begin{proof}
It follows by construction.
\end{proof}

\begin{lemma}{\bf (The new remainder)}\label{nuovoresto}
Assume that the smallness condition \eqref{smallsmall}
holds true.
Then one has
\begin{equation}\label{nuovoOp}
L_{+}:=\Phi L\Phi^{-1}:=\omega\cdot\pa_{\vphi}+\ii(\Delta_g+Z_{+}+R_{+})
\end{equation}
where 
$Z_{+}$ is the normal form given by \eqref{newZ}
and the new remainder $R_{+}$ is Hermitian and satisfies for all $s\in[s_0,S-\mathtt b]$
\begin{equation}\label{nuovoRem1}
\bral R_{+}\brar_{\rho,s}^{\gamma,\mathcal{O}_+}\leq_s 
N^{-\mathtt{b}}\bral R\brar^{\gamma,\mathcal{O}}_{\rho,s+\mathtt{b}}+
\gamma^{-1}N^{2\tau+\rho+1}
\bral R\brar^{\gamma,\mathcal{O}}_{\rho,s_0}
\bral R\brar^{\gamma,\mathcal{O}}_{\rho,s} 
\end{equation}
\begin{equation}\label{nuovoRem1bis}
\bral R_{+}\brar_{\rho,s+\mathtt{b}}^{\gamma,\mathcal{O}_+}\leq_s 
\bral R\brar^{\gamma,\mathcal{O}}_{\rho,s+\mathtt{b}}+
\gamma^{-1}N^{2\tau+\rho+1}
\bral R\brar^{\gamma,\mathcal{O}}_{\rho,s_0}
\bral R\brar^{\gamma,\mathcal{O}}_{\rho,s+\mathtt{b}}\,.
\end{equation}
\end{lemma}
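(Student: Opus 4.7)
The plan is to combine the Lie expansions \eqref{operTotale4}--\eqref{operTotale5} applied to $L$ with the homological equation \eqref{omoeq} in order to isolate the new normal form $Z_+ = Z + \mathrm{Diag}R$ (as in Lemma \ref{newnormal}) and obtain a concrete expression for $R_+$. Writing $L = \omega\cdot\partial_\varphi + iM$ with $M = \Delta_g + Z + R$, the terms of $\Phi L\Phi^{-1}$ that are linear in $S$ read $-i\omega\cdot\partial_\varphi S + [iS, i(\Delta_g+Z)] + [iS, iR]$; the first two, combined with the $iR$ already present in $iM$, are designed by \eqref{omoeq} to produce $i\mathrm{Diag}R + iQ$. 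Truncating the Lie series at a fixed large order $q$, I get the formula
\[
iR_+ \;=\; iQ \;+\; [iS,iR] \;+\; \sum_{p=2}^{q}\tfrac{1}{p!}\mathrm{ad}_{iS}^{p}(iM) \;-\; \sum_{p=2}^{q}\tfrac{1}{p!}\mathrm{ad}_{iS}^{p-1}(i\omega\cdot\partial_\varphi S) \;+\; \mathcal{I}_1 \;+\; \mathcal{I}_2,
\]
with $\mathcal{I}_1, \mathcal{I}_2$ the integral tails involving the flow $\Phi_S^{\tau}$. Hermiticity of $R_+$ is automatic: $iS$ is anti-Hermitian so $\Phi$ is unitary, and conjugation preserves the splitting ``anti-Hermitian $\omega\cdot\partial_\varphi$ plus $i$ times Hermitian''; together with the Hermiticity of $Z$, $\mathrm{Diag}R$ and $Q$ (from Lemma \ref{omoequation}), this forces $R_+$ to be Hermitian.

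Each contribution to $iR_+$ will then be estimated in the $\bral\cdot\brar_{\rho,s}^{\gamma,\mathcal{O}_+}$ norm. The truncation remainder $Q$ produces the first summands on the right-hand sides of \eqref{nuovoRem1} and \eqref{nuovoRem1bis} directly via \eqref{ultrastima}. The leading quadratic term $[iS,iR] = -[S,R]$ is the core of the argument: I would decompose
\[
\mathcal{D}^{\rho}[S,R] \;=\; \bigl(\mathcal{D}^{\rho}S\mathcal{D}^{-\rho}\bigr)\bigl(\mathcal{D}^{\rho}R\bigr) \;-\; \bigl(\mathcal{D}^{\rho}R\bigr)\,S,
\]
and redistribute the weight $\mathcal{D}^{\rho}$ analogously in $[S,R]\mathcal{D}^{\rho}$, then apply the tame product estimate of Lemma \ref{lem:tame} together with its $\bral\cdot\brar_{\rho,s}$ version from Lemma \ref{DecayAlg2}. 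Feeding in the decay-norm bounds on $S$ and on the twisted conjugates $\mathcal{D}^{\pm\rho}S\mathcal{D}^{\mp\rho}$ supplied by \eqref{gene} yields exactly the quadratic factor $\gamma^{-1}N^{2\tau+\rho+1}\bral R\brar^{\gamma,\mathcal{O}}_{\rho,s_0}\bral R\brar^{\gamma,\mathcal{O}}_{\rho,s}$ appearing in both \eqref{nuovoRem1} and \eqref{nuovoRem1bis}.

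The higher-order iterated commutators $\mathrm{ad}_{iS}^{p}(iM)$ and $\mathrm{ad}_{iS}^{p-1}(i\omega\cdot\partial_\varphi S)$ for $p\ge 2$, as well as the integral tails $\mathcal{I}_1, \mathcal{I}_2$ (for which the conjugating flows $\Phi_S^{\tau}$ are controlled through Lemma \ref{stimaMappa} and the mapping estimates of Appendix \ref{AppC}), will be handled by iterating the same decomposition. Each additional $iS$-layer gains a factor $\bral S\brar_{s_0}^{\gamma,\mathcal{O}_+}\le_{s} \gamma^{-1}N^{2\tau+1}\bral R\brar_{\rho,s_0}^{\gamma,\mathcal{O}}\le 1/2$, the last inequality being exactly \eqref{smallsmall}, so the series sums geometrically and is absorbed into a constant multiple of the leading quadratic term. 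The hard part throughout is the $\rho$-smoothing bookkeeping: since $S$ itself carries no intrinsic regularization, one cannot estimate $[S,R]$ through a direct bound on $\bral S\brar_{\rho,s}$, and the remedy is precisely the twisted bound on $\bral\mathcal{D}^{\pm\rho}S\mathcal{D}^{\mp\rho}\brar_s$ from \eqref{gene}, which at the cost of a single factor $N^{\rho}$ lets every commutator be rewritten as a product in which the $\rho$-regularization is always carried by $R$. The two inequalities \eqref{nuovoRem1} and \eqref{nuovoRem1bis} will finally be obtained from the same scheme by pairing with the low- and high-regularity bounds on $Q$ in \eqref{ultrastima}, respectively.
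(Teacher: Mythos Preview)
Your overall strategy matches the paper's: Lie expansion, homological equation to extract $Z_+$ and $Q$, then tame estimates on the remaining commutators via the twisted-conjugate trick $\mathcal D^{\pm\rho}S\mathcal D^{\mp\rho}$ from \eqref{gene}. The treatment of $[S,R]$ and the geometric summation under \eqref{smallsmall} are exactly right. Two points deserve tightening.

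\textbf{The unbounded part of $M$.} In your formula for $iR_+$ the terms $\mathrm{ad}_{iS}^{\,p}(iM)$ with $p\ge 2$ still contain $\mathrm{ad}_{iS}^{\,p}\bigl(i(\Delta_g+Z)\bigr)$, and $\Delta_g+Z$ has no finite $\bral\cdot\brar_{\rho,s}$ norm, so ``iterating the same decomposition'' does not apply to it directly. The fix (which is what the paper does) is to group, for each $p\ge2$, the two sums you wrote separately:
\[
\tfrac{1}{p!}\mathrm{ad}_{iS}^{\,p}(iM)-\tfrac{1}{p!}\mathrm{ad}_{iS}^{\,p-1}(i\,\omega\!\cdot\!\partial_\varphi S)
=\tfrac{1}{p!}\mathrm{ad}_{iS}^{\,p-1}\bigl(\mathrm{ad}_{iS}(iM)-i\,\omega\!\cdot\!\partial_\varphi S\bigr),
\]
and then use the homological equation \eqref{omoeq} once more on the inner bracket to replace it by $i(\mathrm{Diag}R+Q-R)+[iS,iR]$, a quantity with finite $\bral\cdot\brar_{\rho,s}$ norm comparable to $\bral R\brar_{\rho,s}$. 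After this substitution the paper's series
\[
\widetilde R_+=\sum_{p\ge2}\tfrac{i^{p-1}}{p!}\mathrm{ad}_{S}^{\,p-1}\bigl(\mathrm{Diag}R+Q-R\bigr)+\sum_{p\ge1}\tfrac{1}{p!}\mathrm{ad}_{S}^{\,p}(R)
\]
emerges, and now every factor is bounded and your $\mathcal D^{\pm\rho}$ bookkeeping goes through verbatim.

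\textbf{Truncation and tails.} The paper does \emph{not} truncate the Lie series: since $\bral S\brar_{s_0}^{\gamma,\mathcal O_+}\le 1/2$ by \eqref{smallsmall}, the full exponential series converges in $\mathcal M_s$ (this is exactly Lemma~\ref{well-well}), so no integral remainders $\mathcal I_1,\mathcal I_2$ are needed. Your truncated version is not wrong, but it creates extra work, and the reference to Appendix~\ref{AppC} is misplaced: those lemmas live in the pseudo-differential framework with the $|\cdot|_{\rho,r,s}$ seminorms, whereas here one must control $\Phi_S^\tau(\cdot)\Phi_S^{-\tau}$ in the matrix decay norm $\bral\cdot\brar_{\rho,s}$. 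That would require writing $\Phi_S^\tau=\mathrm{Id}+\Psi_\tau$ and invoking Lemma~\ref{well-well} together with Lemma~\ref{DecayAlg2}, not Appendix~\ref{AppC}. Simply summing the full series, as the paper does, sidesteps this entirely.
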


\begin{proof}
Using the Lie expansions \eqref{operTotale4} and \eqref{operTotale5}
we get
\[
\begin{aligned}
L_{+}:=\Phi L\Phi^{-1}&=\omega\cdot\pa_{\vphi}+\ii(\Delta_g+Z)+\ii R+
\ii [\ii S, \Delta_g+Z]-\ii \omega\cdot\pa_{\vphi}S
\\&+\ii \sum_{p\geq 1}\frac{\ii^{p}}{p!}{\rm ad}_{S}^{p}(R)
+\ii\sum_{p\geq2}\frac{\ii^{p-1}}{p!}{\rm ad}_{S}\big(
[\ii S, \Delta_g+Z]- \omega\cdot\pa_{\vphi}S
\big)\,.
\end{aligned}
\]
Hence, 
equations \eqref{omoeq}, \eqref{newZ} 
 lead to the following formula:
$$R_+=Q+\widetilde{R}_{+}$$
with $Q:=(1-\Pi_N)R$ satisfying  \eqref{ultrastima} and 
\begin{equation}\label{gene3}
\widetilde{R}_{+}:=
\sum_{p\geq 2}\frac{\ii^{p-1}}{p!}{\rm ad}_{S}^{p-1}\Big(
{\rm Diag}R+Q-R
\Big) +\sum_{p\geq1}\frac{1}{p!}{\rm ad}_{S}^{p}\big(R\big)\,.
\end{equation}
Thus, in order to prove \eqref{nuovoRem1} we need to estimate $\widetilde{R}_+$.
Consider (for instance) the composition operator $SR$.
In order to control the $\bral\cdot\brar_{\rho,s}^{\gamma,\mathcal{O}_{+}}$-norm
we shall bound the  decay norm of 
$\mathcal{D}^{\rho}S R$. The estimates for 
for $S R\mathcal{D}^{\rho}$ is the same.
We have that
\begin{equation}\label{allergia}
\begin{aligned}
\bral D^{\rho}SR\brar_{s}^{\gamma,\mathcal{O}_{+}}=
\bral D^{\rho}S\mathcal{D}^{-\rho} \mathcal{D}^{\rho}R\brar_{s}^{\gamma,\mathcal{O}_{+}}
&\stackrel{\eqref{decayTame}}{\le_s}
\bral D^{\rho}S\mathcal{D}^{-\rho}\brar_{s}^{\gamma,\mathcal{O}_{+}}
\bral \mathcal{D}^{\rho}R\brar_{s_0}^{\gamma,\mathcal{O}}\\&+
\bral D^{\rho}S\mathcal{D}^{-\rho}\brar_{s_0}^{\gamma,\mathcal{O}_{+}}
\bral\mathcal{D}^{\rho}R\brar_{s}^{\gamma,\mathcal{O}}\\
&\stackrel{\eqref{gene}}{\le_s}
\gamma^{-1}N^{2\tau+\rho+1}\bral R\brar_{\rho,s}^{\gamma,\mathcal{O}}
\bral R\brar_{\rho,s_0}^{\gamma,\mathcal{O}}
\end{aligned}
\end{equation}
The commutator $[S,R]$ satisfies the same bound as \eqref{allergia}. Therefore, 
by \eqref{allergia}, \eqref{ultrastima}, formula \eqref{gene3}, the smallness assumption
\eqref{smallsmall},
and reasoning as in the proof of Lemma \ref{well-well}
we get the \eqref{nuovoRem1} and \eqref{nuovoRem1bis}.
\end{proof}

\subsection{Iteration and Convergence}\label{itteroittero}
In this section we introduce a new constant
\begin{equation}\label{costantine1}
\mathtt{a}:=\mathtt{b}-2=6d+15n+18\,.
\end{equation}
For $N_0\geq1$ we define the sequence $(N_\nu)_{\nu\geq0}$ by
$$ N_{\nu}:=N_{0}^{\chi^{\nu}}\,,\; \nu\geq0$$ 
with $\chi:=3/2$ and we set $N_{-1}=1$.
The proof of 
Theorem \ref{thm:redu} is based on the 
 following iterative lemma.

\begin{proposition}{\bf (Iteration)}\label{ittero} Let $s\in[s_0,S-\mathtt b]$.
There exist  $C(s)>0$ and $N_0\equiv N_0(s)\ge1$ 
such that, if (recall \eqref{paramPiccolez})
\begin{equation}\label{smallcond}
C(s)N_0^{2\tau+1+\rho+\mathtt a}\epsilon\leq \frac12\,,
\end{equation}
then
we may construct recursively sets $\mathcal{O}_{\nu}\subset \mathcal{O}_{0}$ and 
operators, defined for $\omega\in\mathcal{O}_{\nu}$, 
\begin{equation}\label{opNuesimo}
L_{\nu}:=L_{\nu}(\omega):=\omega\cdot\pa_{\vphi}+\ii (\Delta_g+Z_{\nu}+R_{\nu})
\end{equation}
so that the following properties are satisfied 
for all $\nu\in\N$:

\smallskip
\noindent
$({\bf S1})_{\nu}$ 
There is a Lipschitz family of symplectic maps  $\Phi_{\nu}(\vphi)=\Phi_{\nu}(\vphi,\omega):={\rm Id}+\Psi_{\nu}(\vphi)\in\L(H^s,H^s)$
defined on $\mathcal{O}_{\nu}$ such that, for $\nu\geq 1$,
\begin{equation}\label{coniugato}
L_{\nu}:=\Phi_{\nu}L_{\nu-1}\Phi_{\nu}^{-1}\,,
\end{equation}
and, for $s\in [s_0,S-\mathtt b]$,
\begin{equation}\label{stimamappaNu}
\bral \Psi_{\nu}\brar^{\gamma,\mathcal{O}_{\nu}}_{s}
\leq \gamma^{-1}\bral R_0\brar^{\gamma,\mathcal{O}_0}_{\rho,s+\mathtt{b}}
N_{\nu-1}^{2\tau+2}
N_{\nu-2}^{-\mathtt a}\,.
\end{equation}

\smallskip
\noindent
$({\bf S2})_{\nu}$ 
The operator $Z_{\nu}=Z_{0}+Z_{\nu,2}$ where 
$Z_{\nu,2}$ is $\vphi$-independent, block-diagonal and Hermitian.
Moreover it satisfies 
\begin{equation}\label{differenzaNormalform}
\bral Z_{\nu}-Z_{\nu-1}\brar_{\rho,s}^{\gamma,\mathcal{O}_{\nu}}
\leq  
\bral R_0\brar_{\rho,s+\mathtt{b}}^{\gamma,\mathcal{O}_0}N_{\nu-2}^{-\mathtt{a}}\,.
\end{equation}
Moreover there
is a sequence of Lipschitz function
 \[
 \mu_{[k]}^{(\nu)} : \mathcal{O}_{0} \to \mathbb{R}^{d_{k}}\,,\quad k\in \mathbb{N}
 \]
such that, for $\omega\in \mathcal{O}_{\nu}$, the functions
 $\mu^{(\nu)}_{k,j}$, for $j=1,\ldots,d_{k}$, are the eigenvalues
 of the block 
 \[
 (\Delta+Z_{\nu})_{[k]}^{[k]}\,,
 \]
satisfying 
\begin{equation}\label{modo1}
\sup_{k\in \mathbb{N}}\langle k\rangle^{\ka}
 |\mu^{(\nu)}_{[k]} |^{lip,\mathcal{O}_0}\leq \frac14+\epsilon\sum_{j=1}^{\nu-1}2^{-\nu}\,,
 \end{equation}
 where $\epsilon$ is defined in \eqref{paramPiccolez}.

\smallskip
\noindent
$({\bf S3})_{\nu}$ The remainder $R_{\nu}$ is Hermitian and
satisfies, for any $s\in [s_0,S-\mathtt b]$,
\begin{equation}\label{stimaR}
\bral R_{\nu}\brar^{\gamma,\mathcal{O}_{\nu}}_{\rho,s}\leq 
\bral R_0\brar^{\gamma,\mathcal{O}_0}_{\rho,s+\mathtt{b}} N_{\nu-1}^{-\mathtt{a}}\,,
\qquad
\bral R_{\nu}\brar^{\gamma,\mathcal{O}_{\nu}}_{\rho,s+\mathtt{b}}\leq 
\bral R_0\brar^{\gamma,\mathcal{O}_0}_{\rho,s+\mathtt{b}} N_{\nu-1}\,.
\end{equation}

\smallskip
\noindent
$({\bf S4})_{\nu}$ One has $\mathcal{O}_{\nu}\subset \mathcal{O}_{\nu-1}\subset \mathcal{O}_0$ (see \eqref{zeroMel}) and   
\begin{equation}\label{measO}
{\rm meas}\big(\mathcal{O}_{\nu+1}\setminus \mathcal{O}_{\nu}\big)
\leq \gamma N_{\nu}^{-1} \qquad \text{and} \qquad
{\rm meas}\big(\mathcal{O}_{0}\setminus\mathcal{O}_{\nu+1}\big)\leq 2\gamma\,.
\end{equation}
\end{proposition}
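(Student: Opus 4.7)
\medskip
\noindent
\textbf{Proof proposal for Proposition \ref{ittero}.}
The plan is to argue by induction on $\nu$, the base case being immediate: for $\nu=0$ we set $\Phi_0={\rm Id}$, $Z_{0,2}=0$, $\mathcal{O}_0$ as in \eqref{zeroMel}; hypothesis $(\mathbf{S1})_0$ is vacuous, $(\mathbf{S2})_0$ coincides with $(\mathbf{A1})$ and with the definition of the eigenvalues $\mu_{k,j}^{(0)}$, $(\mathbf{S3})_0$ follows from $(\mathbf{A2})$ and the definition \eqref{paramPiccolez} of $\epsilon$, and $(\mathbf{S4})_0$ is trivial. For the inductive step $\nu\rightsquigarrow\nu+1$ the strategy is to apply one ``KAM step'' to $L_\nu$, at frequency cut-off $N_\nu$, assembled from Lemmas \ref{omoequation}, \ref{stimaMappa}, \ref{newnormal} and \ref{nuovoresto}, after first shrinking the parameter set via Lemma \ref{lem:misuro}.

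First I would define $\mathcal{O}_{\nu+1}:=\mathcal{O}_+(\gamma,N_\nu)\cap\mathcal{O}_\nu$ as in \eqref{calO+}, with the role of $\mu_{k,j}$ played by the eigenvalues $\mu^{(\nu)}_{k,j}$ of $(\Delta_g+Z_\nu)_{[k]}^{[k]}$. To apply Lemma \ref{lem:misuro} I must verify \eqref{uffauffa} for $Z_{\nu,2}$; this follows by a telescoping argument: writing $Z_{\nu,2}=\sum_{j=1}^{\nu}(Z_j-Z_{j-1})$, the inductive bound \eqref{differenzaNormalform} at $s=s_0$ together with $\sum N_{j-2}^{-\mathtt a}<\infty$ and the smallness assumption \eqref{smallcond} (which forces $\epsilon\ll\gamma$) yields $\bral Z_{\nu,2}\brar_{\kappa,s_0+\mathtt b}^{\gamma,\mathcal{O}_\nu}\le\gamma/8$. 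Lemma \ref{lem:misuro} then gives the first bound in $(\mathbf{S4})_{\nu+1}$, and $\sum_\nu N_\nu^{-1}<\infty$ together with the choice $N_0$ large produces the second. The Lipschitz bound \eqref{modo1} on the updated eigenvalues is obtained from the perturbative inequality for Hermitian matrices (Corollary A.7 in \cite{FG1}) applied to $Z_{\nu+1}-Z_\nu$ and controlled by $\bral Z_{\nu+1}-Z_\nu\brar_{\kappa,s_0}^{\gamma,\mathcal{O}_{\nu+1}}\le\bral Z_{\nu+1}-Z_\nu\brar_{\rho,s_0}^{\gamma,\mathcal{O}_{\nu+1}}\le\bral R_0\brar_{\rho,s_0+\mathtt b}^{\gamma,\mathcal{O}_0}N_{\nu-1}^{-\mathtt a}$, which is summable and bounded by $\epsilon\, 2^{-\nu}$ up to a multiplicative constant absorbed into $\epsilon$.

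Next, on $\mathcal{O}_{\nu+1}$ Lemma \ref{omoequation} produces an Hermitian generator $S_{\nu+1}$ and a remainder $Q_{\nu+1}=(1-\Pi_{N_\nu})R_\nu$ satisfying \eqref{gene}-\eqref{ultrastima}; the smallness assumption \eqref{smallsmall} needed by Lemma \ref{stimaMappa} holds because, by $(\mathbf{S3})_\nu$ at $s=s_0$, $\gamma^{-1}C(s_0)N_\nu^{2\tau+1}\bral R_\nu\brar_{\rho,s_0}^{\gamma,\mathcal{O}_\nu}\lesssim N_\nu^{2\tau+1}N_{\nu-1}^{-\mathtt a}\epsilon$, which tends to zero under \eqref{smallcond}. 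The map $\Phi_{\nu+1}=e^{\ii S_{\nu+1}}={\rm Id}+\Psi_{\nu+1}$ then satisfies \eqref{stimamappaNu} via \eqref{stimaMappa1} and \eqref{gene}. Lemma \ref{newnormal} provides $Z_{\nu+1}=Z_\nu+{\rm Diag}\,R_\nu$, which is Hermitian, block-diagonal, $\vphi$-independent, and verifies \eqref{differenzaNormalform} by \eqref{diffNormal} combined with $(\mathbf{S3})_\nu$. Finally Lemma \ref{nuovoresto} yields $L_{\nu+1}=\omega\cdot\partial_\vphi+\ii(\Delta_g+Z_{\nu+1}+R_{\nu+1})$ with $R_{\nu+1}$ Hermitian and obeying \eqref{nuovoRem1}-\eqref{nuovoRem1bis}.

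The crux is then to verify $(\mathbf{S3})_{\nu+1}$ by combining the inductive low/high bounds of $(\mathbf{S3})_\nu$ with \eqref{nuovoRem1}-\eqref{nuovoRem1bis}, chasing the exponents of $N_\nu$ and using $N_{\nu}=N_{\nu-1}^{\chi}$ with $\chi=3/2$. Schematically one gets
\[
\bral R_{\nu+1}\brar_{\rho,s}^{\gamma,\mathcal{O}_{\nu+1}}
\lesssim_s N_\nu^{-\mathtt b}N_{\nu-1}\bral R_0\brar_{\rho,s+\mathtt b}^{\gamma,\mathcal{O}_0}
+\gamma^{-1}N_\nu^{2\tau+\rho+1}\big(\bral R_0\brar_{\rho,s_0+\mathtt b}^{\gamma,\mathcal{O}_0}N_{\nu-1}^{-\mathtt a}\big)\bral R_0\brar_{\rho,s+\mathtt b}^{\gamma,\mathcal{O}_0}N_{\nu-1}^{-\mathtt a},
\]
which reduces to $\bral R_0\brar_{\rho,s+\mathtt b}^{\gamma,\mathcal{O}_0}N_\nu^{-\mathtt a}$ provided $\mathtt b-\mathtt a\ge 1/\chi$ (used for the first term) and provided $\mathtt a$ is large enough that $N_\nu^{2\tau+\rho+1+\mathtt a}N_{\nu-1}^{-2\mathtt a}\le 1$, i.e., $\mathtt a(1-2/\chi)+2\tau+\rho+1\le 0$: with $\chi=3/2$ this becomes $\mathtt a\ge 3(2\tau+\rho+1)$, which is exactly ensured by the choices \eqref{costanteB}, \eqref{costantine1}. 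The analogous (simpler) computation for the high norm $\bral R_{\nu+1}\brar_{\rho,s+\mathtt b}$ uses \eqref{nuovoRem1bis} and absorbs the perturbative quadratic term since $\epsilon$ is small. This closes the induction. The main obstacle is precisely this balancing act between the gain from the Fourier truncation (factor $N_\nu^{-\mathtt b}$) and the loss coming from small divisors (factor $N_\nu^{2\tau+\rho+1}$) in the quadratic KAM term, together with the bookkeeping needed to propagate \eqref{uffauffa} along the iteration so that Lemma \ref{lem:misuro} may be applied at every step.
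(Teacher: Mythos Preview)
Your proposal is correct and follows essentially the same approach as the paper: induction on $\nu$, with each step assembled from Lemma~\ref{lem:misuro} (after verifying \eqref{uffauffa} by telescoping), Lemma~\ref{omoequation}, Lemma~\ref{stimaMappa}, Lemma~\ref{newnormal}, and Lemma~\ref{nuovoresto}, followed by the same exponent-balancing to close $(\mathbf{S3})_{\nu+1}$. Two small points worth making explicit when you write it up: the eigenvalue functions $\mu^{(\nu+1)}_{[k]}$ must be extended from $\mathcal{O}_{\nu+1}$ to all of $\mathcal{O}_0$ via Kirszbraun's theorem (as the paper does) so that \eqref{modo1} makes sense on $\mathcal{O}_0$; and the very first step $\nu=0\to\nu=1$ is where the full strength of \eqref{smallcond} with exponent $2\tau+1+\rho+\mathtt a$ is actually needed, since there is no $N_{\nu-1}^{-\mathtt a}$ gain yet (the paper isolates this initial step for exactly that reason).
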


\begin{proof}
We proceed by induction. We first verify the inductive step.
So we assume  that conditions $({\bf Si})_{j}$, $i=1,2,3,4$, hold for  $1\leq j\leq \nu$.
We shall prove that they holds for $\nu\rightsquigarrow \nu+1$.

We define the set $\mathcal{O}_{\nu+1}$ as in \eqref{calO+}
with $\mathcal{O}\rightsquigarrow \mathcal{O}_{\nu}$, 
$N\rightsquigarrow N_{\nu}$, $\mu_{k,j}\rightsquigarrow \mu_{k,j}^{(\nu)}$.
Using the \eqref{differenzaNormalform}  for $s\rightsquigarrow s_0$, 
we have that
\begin{equation}\label{after1}
\begin{aligned}
\bral Z_{\nu,2}^{(\nu)}\brar_{\rho,s_0}^{\gamma,\mathcal{O}_{\nu}}&\leq
\bral Z_{0,2}\brar_{\rho,s_0+\mathtt{b}}^{\gamma,\mathcal{O}_0}+
\sum_{j=1}^{\nu}\bral Z_{j,2}-Z_{j-1,2}\brar_{\rho,s_0+\mathtt{b}}^{\gamma,\mathcal{O}_{j}}\\
&
\stackrel{\eqref{differenzaNormalform},\eqref{paramPiccolez}}{\leq}\gamma\epsilon\sum_{j=0}^{\nu-1}2^{-j}
\end{aligned}
\end{equation}
for  $N_0\geq 1$ large enough. Hence condition \eqref{uffauffa} is 
satisfied for $\epsilon$ small enough, i.e.again  $N_0$ large enough (recall \eqref{smallcond}). Therefore Lemma \ref{lem:misuro} implies that
\eqref{measO} holds for the set $\mathcal{O}_{\nu+1}$ which is the $({\bf S4})_{\nu+1}$.

We define the new normal form $Z_{\nu+1}$
as (recall \eqref{newZ})
\[
Z_{\nu+1}:=Z_{\nu}+ {\rm Diag}(R_{\nu})\,.
\]
Lemma \ref{newnormal}, applied with $R\rightsquigarrow R_{\nu}$,
together with the estimates \eqref{stimaR}, implies the estimate \eqref{differenzaNormalform}.
Let $\widetilde{\mu}_{[k]}^{(\nu+1)}$ be the eigenvalues 
of the block $ (\Delta+Z_{\nu+1})_{[k]}^{[k]}$ which are defined on the set $\mathcal{O}_{\nu+1}$.
The bounds \eqref{modo1} follows by Lemma \ref{lem:Lipeign}
and \eqref{after1}.
Moreover, by Kirtzbraun Theorem, there is an extension $\mu_{[k]}^{(\nu+1)}$
of $\widetilde{\mu}_{[k]}^{(\nu+1)}$ to the whole set $\mathcal{O}_0$
with the same Lipschitz norm. This prove the $({\bf S2})_{\nu+1}$.

Then we want to construct a map $\Phi_{\nu+1}={\rm Id}+\Psi_{\nu+1}$.
  First by the inductive hypothesis \eqref{stimaR} we deduce that (with $C(s)$ given in Lemma \ref{stimaMappa})
\begin{equation}\label{after3}
\begin{aligned}
C(s)\gamma^{-1}N_{\nu}^{2\tau+1}\bral R_{\nu}\brar_{\rho,s_0}^{\gamma,\mathcal{O}_{\nu}}
&\leq C(s)\gamma^{-1}N_{\nu}^{2\tau+1}N_{\nu-1}^{-\mathtt{a}}
\bral R_{0}\brar_{\rho,s_0+\mathtt{b}}^{\gamma,\mathcal{O}_{0}}
\stackrel{\eqref{paramPiccolez},\eqref{costantine1}}{\leq}C(s)
\epsilon N_{\nu}^{2\tau+1-\frac{2}{3}\mathtt{a}}\leq \frac{1}{2} 
\end{aligned}
\end{equation}
for $\epsilon$ small enough and 
since
$2\tau+1-\frac{2}{3}\mathtt{a}\leq 0$.
Hence the smallness condition 
\eqref{smallsmall} holds true. We then apply
 Lemmata \ref{omoequation} and \ref{stimaMappa}
with $R\rightsquigarrow R_{\nu}$ and $\mathcal{O}_{+}\rightsquigarrow \mathcal{O}_{\nu+1}$
and construct a map $\Phi_{\nu+1}={\rm Id}+\Psi_{\nu+1}$.\\  
Furthermore 
using 
\eqref{stimaMappa1}, \eqref{stimaR} at rank $\nu$, $N_{\nu-1}=N_\nu^{2/3}$ and ${2\tau+1-\frac23\mathtt{a}}\leq -1$, we obtain the estimate \eqref{stimamappaNu} at rank $\nu+1$.
This proves the $({\bf S1})_{\nu+1}$.

We finally set
\begin{equation}\label{opnu+1}
L_{\nu+1}:=\Phi_{\nu+1}L_{\nu}\Phi_{\nu+1}^{-1}=\omega\cdot\pa_{\vphi}+\ii(\Delta_g+Z_{\nu+1}+R_{\nu+1})
\end{equation}
where the remainder $R_{\nu+1}$ is given by Lemma \ref{nuovoresto}.
We have
\begin{equation}\label{stimaR+1}
\begin{aligned}
\bral R_{\nu+1}\brar^{\gamma,\mathcal{O}_{\nu+1}}_{\rho,s}&\stackrel{\eqref{nuovoRem1}}{\leq_s}
N_{\nu}^{-\mathtt{b}}\bral R_{\nu}\brar^{\gamma,\mathcal{O}_{\nu}}_{\rho,s+\mathtt{b}}+
\gamma^{-1}N_{\nu}^{2\tau+\rho+1}
\bral R_{\nu}\brar^{\gamma,\mathcal{O}_{\nu}}_{\rho,s_0}
\bral R_{\nu}\brar^{\gamma,\mathcal{O}_{\nu}}_{\rho,s}\\
&\stackrel{\eqref{stimaR}}{\le_s}
\bral R_0\brar^{\gamma,\mathcal{O}_0}_{\rho,s+\mathtt{b}}
\Big( N_{\nu}^{-\mathtt{b}+1}+\gamma^{-1}\bral R_0\brar^{\gamma,\mathcal{O}_0}_{\rho,s_0} 
N_{\nu}^{2\tau+\rho+1-\frac{4}{3}\mathtt{a}}
\Big)
\\&
\le N_{\nu}^{-\mathtt{a}}\bral R_0\brar^{\gamma,\mathcal{O}_0}_{\rho,s+\mathtt{b}}
\end{aligned}
\end{equation}
for $N_0$ large enough where we used that $\gamma^{-1}\bral R_0\brar^{\gamma,\mathcal{O}_0}_{\rho,s_0} \leq 1$
(thanks to \eqref{piccolopiccolo})
and 
\[
\mathtt{b}\geq \mathtt{a}+2\,, \qquad 2\tau+\rho+1-\frac{1}{3}\mathtt{a}\leq -1.
\]
The latter condition is implied by the choice of $\mathtt{a}$ in \eqref{costantine1}
recalling the \eqref{costanteB}.
The \eqref{stimaR+1} is the first estimate in \eqref{stimaR} at step $\nu+1$.
We now give the estimate in ``high'' norm.
We have 
\begin{equation}\label{stimaR+2}
\begin{aligned}
\bral R_{\nu+1}\brar^{\gamma,\mathcal{O}_{\nu+1}}_{\rho,s+\mathtt{b}}
&\stackrel{\eqref{nuovoRem1bis}}{\leq_s}
\bral R_{\nu}\brar^{\gamma,\mathcal{O}_{\nu}}_{\rho,s+\mathtt{b}}+
\gamma^{-1}N_{\nu}^{2\tau+\rho+1}
\bral R_{\nu}\brar^{\gamma,\mathcal{O}_{\nu}}_{\rho,s_0}
\bral R_{\nu}\brar^{\gamma,\mathcal{O}_{\nu}}_{\rho,s+\mathtt{b}}\\
&\stackrel{\eqref{stimaR}}{\le_s}
\bral R_0\brar^{\gamma,\mathcal{O}_0}_{\rho,s+\mathtt{b}}N_{\nu-1}
\Big( 1 +\gamma^{-1}\bral R_0\brar^{\gamma,\mathcal{O}_0}_{\rho,s_0+\mathtt b} 
N_{\nu}^{2\tau+\rho+1}N_{\nu-1}^{-\mathtt a-1}
\Big)\\
&\le_{s} N_{\nu} \bral R_0\brar^{\gamma,\mathcal{O}_0}_{\rho,s+\mathtt{b}}
\end{aligned}
\end{equation}
for $N_0$ large enough depending on $s$ and thanks to fact that $3\tau+\frac32\rho+\frac12-\mathtt{a}\leq0$. 
This is the $({\bf S3})_{\nu+1}$.

\vspace{0.5em}

Now we have to verify the initial step: $\nu=1$. $({\bf S2})_1$ and $({\bf S4})_1$ are proved exactly in the way as in the inductive step. Now to proceed  we have to construct $\Phi_1$ but now \eqref{after3} becomes
\begin{equation}\label{after31}
\begin{aligned}
C(s)\gamma^{-1}N_{0}^{2\tau+1}\bral R_{0}\brar_{\rho,s_0}^{\gamma,\mathcal{O}_{0}}
\stackrel{\eqref{paramPiccolez}}{\leq}C(s)
\epsilon N_{0}^{2\tau+1}\leq \frac{1}{2} 
\end{aligned}
\end{equation}
which is less than $\frac12$ for $\epsilon$ and $N_0$ satisfying \eqref{smallcond}.

Furthermore 
using 
\eqref{stimaMappa1} we obtain 
$$\bral\Psi_1\brar_{s}^{\gamma,\mathcal{O}_1}\leq C(s) 
\gamma^{-1} N_0^{2\tau+1}
\bral R_0\brar_{\rho,s}^{\gamma,\mathcal{O}}\leq \gamma^{-1} N_0^{2\tau+2}
\bral R_0\brar_{\rho,s}^{\gamma,\mathcal{O}}$$
for $N_0$ large enough.
This proves the $({\bf S1})_{\nu+1}$.

Then we set
\begin{equation}\label{opnu+11}
L_{1}:=\Phi_{1}L_{0}\Phi_{1}^{-1}=\omega\cdot\pa_{\vphi}+\ii(\Delta_g+Z_{1}+R_{1})
\end{equation}
where the remainder $R_{1}$ is given by Lemma \ref{nuovoresto}.
We have
\begin{equation}\label{stima+1}
\begin{aligned}
\bral R_{1}\brar^{\gamma,\mathcal{O}_{1}}_{\rho,s}&\stackrel{\eqref{nuovoRem1}}{\leq_s}
N_{0}^{-\mathtt{b}}\bral R_{0}\brar^{\gamma,\mathcal{O}_{0}}_{\rho,s+\mathtt{b}}+
\gamma^{-1}N_{0}^{2\tau+\rho+1}
\bral R_{0}\brar^{\gamma,\mathcal{O}_{0}}_{\rho,s_0}
\bral R_{0}\brar^{\gamma,\mathcal{O}_{0}}_{\rho,s}\\
&\stackrel{\eqref{stimaR}}{\le_s}
\bral R_0\brar^{\gamma,\mathcal{O}_0}_{\rho,s+\mathtt{b}}
\Big( N_{0}^{-\mathtt{b}+1}+\epsilon
N_{0}^{2\tau+\rho+1}
\Big)
\\&
\le N_{0}^{-\mathtt{a}}\bral R_0\brar^{\gamma,\mathcal{O}_0}_{\rho,s+\mathtt{b}}
\end{aligned}
\end{equation}
for $N_0$ large enough where we used 
\eqref{smallcond}
and 
$
\mathtt{b}\geq \mathtt{a}+2\,.
$
The \eqref{stima+1} is the first estimate in \eqref{stimaR} at step $1$, the other is proved similarly.

\end{proof}

\begin{proof}[{\bf Proof of Theorem \ref{thm:redu}}]
Consider the operator $L_0$ in \eqref{start}. The smallness condition \eqref{piccolopiccolo}
implies the \eqref{smallcond}, hence 
Proposition \ref{ittero} applies. We define the set 
\begin{equation}\label{finalset}
\mathcal O_\epsilon\equiv\mathcal{O}_{\infty}:=\cap_{\nu\geq0}\mathcal{O}_{\nu}\,.
\end{equation}
By the measure estimate \eqref{measO} we deduce \eqref{misura}.
For any $\omega\in \mathcal{O}_{\infty}$, $\nu\geq0$,
we define 
(see \eqref{coniugato}, \eqref{stimamappaNu})
 the map
\begin{equation}\label{compoMondo}
\begin{aligned}
\widetilde{\Phi}_{\nu+1}&:=\Phi_{1}\circ\Phi_{2}\circ\cdots
\Phi_{\nu+1}=\widetilde{\Phi}_{\nu}\Phi_{\nu+1}=
\widetilde{\Phi}_{\nu}({\rm Id}+\Psi_{\nu+1})\,.
\end{aligned}
\end{equation}
We want to prove that $(\widetilde{\Phi}_\nu)_{\nu\geq1}$ converges in $\mathcal M_s^{\g,\mathcal{O}_{\infty}}$.  Let us define
\begin{equation}\label{defiTame}
\delta^{(\nu)}_{s}:=\bral\widetilde{\Phi}_{\nu}\brar_{s}^{\gamma,\mathcal{O}_{\infty}}\,.
\end{equation}
We have
\begin{equation}\label{tametame}
\delta^{(\nu+1)}_{s_0}\stackrel{\eqref{decayTame}}{\leq}
\delta^{(\nu)}_{s_0}(1+C
\bral{\Psi}_{\nu+1}\brar^{\gamma,\mathcal{O}_{\infty}}_{s_0})
\stackrel{\eqref{stimamappaNu},\eqref{paramPiccolez}}{\leq}\delta^{(\nu)}_{s_0}(1+C\epsilon N_\nu^{-1})\,.
\end{equation}
By iterating the \eqref{tametame} we get, for any $\nu$,
\begin{equation}\label{tametame2}
\bral\widetilde{\Phi}_{\nu}\brar_{s_0}^{\gamma,\mathcal{O}_{\infty}}\leq 
(1+\bral\Psi_{1}\brar_{s_0}^{\gamma,\mathcal{O}_{\infty}})\Pi_{j\geq1}(1+C\epsilon N_\nu^{-1})\leq 2
\end{equation}
where we used the \eqref{stimamappaNu} to estimate 
$\bral\Psi_{1}\brar_{s_0}^{\gamma,\mathcal{O}_{\infty}}$ and we take $N_0$ large enough. \\
The high norm of $\widetilde{\Phi}_{\nu+1}$ is estimated by 
\begin{equation}\label{tametame3}
\begin{aligned}
\delta_{s}^{(\nu+1)}&\stackrel{\eqref{decayTame}}{\leq}
\delta^{(\nu)}_{s}(1+C(s)
\bral{\Psi}_{\nu+1}\brar^{\gamma,\mathcal{O}_{\infty}}_{s_0})+C(s)
\bral{\Psi}_{\nu+1}\brar^{\gamma,\mathcal{O}_{\infty}}_{s}\bral\widetilde{\Phi}_{\nu}\brar_{s_0}^{\gamma,\mathcal{O}_{\infty}}\\
&\stackrel{\eqref{stimamappaNu},\eqref{tametame2}}{\leq}
\delta^{(\nu)}_{s}(1+C(s)\epsilon N_\nu^{-1})+\e_{\nu}
\end{aligned}
\end{equation}
where
\[
\e_{\nu}:= C(s)
\gamma^{-1}\bral R_0\brar^{\gamma,\mathcal{O}_0}_{\rho,s+\mathtt{b}}N_{\nu}^{-1}\,.
\]
By iterating \eqref{tametame3}, using $\Pi_{j\geq0}(1+C(s)\epsilon N_\nu^{-1})\leq 2$ for $N_0$ large enough, we obtain
\begin{equation}\label{tametame4}
\bral\widetilde{\Phi}_{\nu}\brar_{s}^{\gamma,\mathcal{O}_{\infty}}\leq \bral\widetilde{\Phi}_{1}\brar_{s}^{\gamma,\mathcal{O}_{\infty}}+2\sum_{j\geq1} \e_j\leq 1+C(s)\gamma^{-1}
\bral R_0\brar_{\rho,s+\mathtt{b}}^{\gamma,\mathcal{O}_0}\,.
\end{equation}
Then we have
\begin{align}\nonumber
\bral\widetilde{\Phi}_{\nu+1}-&\widetilde{\Phi}_{\nu}\brar^{\gamma,\mathcal{O}_{\infty}}_{s}
=\bral\widetilde{\Phi}_{\nu}\Psi_{\nu+1}\brar_{s}^{\gamma,\mathcal{O}_{\infty}}\\\nonumber
&\stackrel{\eqref{decayTame}}{\le_s} \bral\widetilde{\Phi}_{\nu}\brar_{s}^{\gamma,\mathcal{O}_{\infty}}\bral\Psi_{\nu+1}\brar_{s_0}^{\gamma,\mathcal{O}_{\infty}}+\bral\widetilde{\Phi}_{\nu}\brar_{s_0}^{\gamma,\mathcal{O}_{\infty}}\bral\Psi_{\nu+1}\brar_{s}^{\gamma,\mathcal{O}_{\infty}}\\ \nonumber
&\stackrel{ \eqref{tametame2}, \eqref{tametame4}, \eqref{stimamappaNu}}{\le_s}
(1+\gamma^{-1}\bral R_0\brar_{\rho,s+\mathtt{b}}^{\gamma,\mathcal{O}_0})\epsilon N_{\nu}^{-1} +\gamma^{-1}\bral R_0\brar_{\rho,s+\mathtt{b}}^{\gamma,\mathcal{O}_0}N_{\nu}^{-1}\\ \label{tametame5}
&\le_s\gamma^{-1}\bral R_0\brar_{\rho,s+\mathtt{b}}^{\gamma,\mathcal{O}_0}N_{\nu}^{-1}\,.
\end{align}
Now fix $s\in[s_0,S-\mathtt b]$, since by hypothesis ({\bf A2}), $R_0\in \mathcal{M}^{\gamma,\mathcal{O}}_{\rho,s+\mathtt{b}} $, we deduce from the last estimate that $(\widetilde{\Psi}_{\nu})_{\nu\geq0}$
is a Cauchy sequence in $\mathcal M_s^{\g,\mathcal{O}_{\infty}}$.
Hence $\widetilde{\Phi}_{\nu}{\to}\Phi_{\infty}\in\mathcal M_s^{\g,\mathcal{O}_{\infty}}$. Furthermore writing
$$\|\widetilde{\Phi}_{\nu}-{\rm Id}\|_{\L(H^s,H^s)}\leq \sum_{j=2}^\nu \|\widetilde{\Phi}_j-\widetilde{\Phi}_{j-1}\|_{\L(H^s,H^s)} +\|\Psi_1\|_{\L(H^s,H^s)} $$
we deduce by \eqref{tametame5}  and Lemma \ref{decayspaziotempo} that 
 $\Phi_{\infty}$ satisfies \eqref{stimaMappain}.
The estimate on $\Phi^{-1}_{\infty}-{\rm Id}$ follows
by using Neumann series and reasoning as in the proof of Lemma
\ref{well-well}.
By \eqref{differenzaNormalform} we deduce that
$Z_{\nu,2}$ is a Cauchy sequence in
$\mathcal M_{\rho,s}^{\g,\mathcal{O}_{\infty}}$. Hence we set
\begin{equation}\label{tametame6}
Z_{\infty}=Z_{0}+Z_{\infty,2}:=Z_{0}+
\lim_{\nu\to\infty}Z_{\nu,2}\,,
\end{equation}
The
\eqref{stimaZinfty} follows again by \eqref{differenzaNormalform}. We also notice that \eqref{stimaR} implies that $R_\nu \to 0$ in $ \mathcal{M}^{\gamma,\mathcal{O}_\infty}_{\rho,s} $.
Now by applying iteratively the \eqref{coniugato}
we have that $L_{\nu}=\widetilde{\Phi}_{\nu}L_{0}\widetilde{\Phi}_{\nu}^{-1}$.
Hence, passing to the limit, we get 
$L_{\nu}\to_{\nu\to\infty} L_{\infty}$ of the form \eqref{operatoreFinale} with $Z_{\infty}$ given by
\eqref{tametame6}.
\end{proof}

 \section{Proof of  Theorem \ref{thm:main}}\label{sec5}
 In this short section we merge the two previous sections to prove 
 the reducibility of the Schr\"odinger equation \eqref{nls}: Theorem \ref{thm:main}.\\
We recall  that equation \eqref{nls} has the form 
\[
\pa_{t}u=-\ii (\Delta_{g}+\e W(\omega t))u
\]
where $\vphi\mapsto W(\vphi)$ is a $C^\infty$ map from $\T^d$ to $\A_\delta$, $\delta\leq 1/2$, and thus  $W\in\A_{\delta,r}$ for any $r>d/2$.
Its reducibility rely on the reducibility of the operator 
$\mathcal{F}$ in \eqref{starting-point}
with $V(\vphi)= \e W(\vphi)$. Rouglhy speaking we want to apply Theorem \ref{thm:regolo} to regularize $\mathcal F$ in such a way operator $\mathcal{F}$ is transformed 
into the operator $\mathcal{F}_{+}$
in \eqref{fine100}. Then we apply Lemma \ref{matrixest} to control the remainder $R$ in \eqref{fine100} in $s$-decay norm. This allows, for $\eps$ small enough, to apply the reducibility Theorem \ref{thm:redu} and to conclude.\\
 To justify all these steps we have to carefully follow the parameters and the smallness conditions. First we fix $\alpha\in (0,1)$ and $\gamma =\eps^\alpha$, $\delta\leq \frac12$, $s>n/2$ and $W$ belonging to all the $\A_{\delta,r}$ with $r>d/2$ . Then we fix $\rho$, $\mathtt b$, $\tau$ as in \eqref{costanteB}, we set $\ka=2\delta-1$ and  we fix $s_0>n/2$ and $S$ such that $s$ and $s+\mathtt b$ belong to $[s_0,S]$ and $S\geq p(\delta,0)$ (see \eqref{action}). Finally we set
$$\rho_0=S+\rho+\frac12,\quad r_0=S.$$
With these values of $\rho_0$, $r_0$, Theorem \ref{thm:regolo}  provide us with $\eps_*=\eps_*(S,n,d,\delta)$, $r_*=r_*(S,n,d,\delta)$ and $p=p(S,n,d,\delta)$ such that if $r>r_*$ and
\begin{equation}\label{yes}\gamma^{-1}\mathcal N_{\delta,r,p}(\eps W)<\eps_*(n,d,\delta)\end{equation}
then we can apply Theorem \ref{thm:regolo} to $\mathcal{F}$ with $V=\eps W$.  Since $W$ belongs to $\A_{\delta,r}$ for any $r>d/2$ , \eqref{yes} is satisfied for $\eps$ small enough.
So there exists $\Phi(\vphi)\in\L(H^s,H^s)$ such that (see \eqref{fine100})
$$\Phi \mathcal{F} \Phi^{-1} =\mathcal{F}_{+}=\omega\cdot\pa_{\vphi}+\ii(\Delta_g+Z+R)\,.$$
Further we knows that  $R\in \mathcal{R}_{\rho_0,r_0}^{\gamma,\mathcal{O}_0}$ and 
\begin{equation}\label{Rregul}
|R|_{\rho_0,r_0,s}^{\gamma,\mathcal{O}_0} \leq C
\Nc_{\delta,r,p}(\eps W) \,.
\end{equation}
Now we apply Lemma \ref{matrixest} to conclude that $R\in\mathcal M^{\g,\mathcal O_0}_{\rho,S}$ and 
\begin{equation}\label{Rregul1}
\bral R\brar_{\rho,S}^{\gamma,\mathcal{O}_0} \leq C
\Nc_{\delta,r,p}(\eps W) \,.
\end{equation}

We notice that the operator 
$\mathcal{F}_{+}$ has the same form of the operator $L_0$ in 
\eqref{start} with $Z_0= Z$, $R_0= R$ and 
$\mathcal{O}=\mathcal{O}_0$ (see \eqref{zeroMel}).
The remainder $R_0$ satisfies
the assumption $({\bf A2})$ by the discussion above.
Notice also that, in view of by  \eqref{Rregul1},
the constant $\epsilon$ given by \eqref{paramPiccolez} satisfies 
\begin{equation}\label{barnantes3}
\epsilon\leq \Nc_{\delta,r,p}( W)\e^{1-\alpha}
\end{equation}
and thus the smallness condition \eqref{piccolopiccolo} is satisfied provided that $\e$ is small enough.
We now prove that $Z_0$ satisfies assumption $({\bf A1})$ with $\kappa:=2\delta-1$.
First we note that, since $\delta\leq 1/2$ then $\kappa\leq 0$.
Moreover, by Theorem \ref{thm:regolo}, we have that
$Z_0:=Z=Z_1+Z_2$
with  $Z_{1}\in\mathcal{A}_{\delta}$ independent of $\omega\in \mathcal{O}_0$, and 
$Z_{2}\in\mathcal{A}_{2\delta-1}^{\gamma,\mathcal{O}_0}$. 
Estimate \eqref{stimefinali1} implies that for all $s\in[s_0,S]$
$$ \Nc_{\delta,s}(Z)\leq C
\Nc_{\delta,r,p}(\eps W)\leq C \Nc_{\delta,r,p}( W)\e^{1-\alpha}.
$$
Since $S\geq p(\delta,0)$ we deduce by \eqref{action} that  $$\|Z\|_{\L(L^2,H^{-\delta})}\leq C \Nc_{\delta,r,p}( W)\e^{1-\alpha}\leq \frac {c_0}2 $$ for $\e$ small enough  which in turn implies  that
$$ |\mu^{(0)}_{k,j}(\om)|\leq \frac {c_0}2 |k|^\delta
$$
and thus \eqref{mu1} holds true. Furthermore since $Z_1$ does not depend on $\om$, we have 
$$ \|(Z_0)_{[k]}^{[k]}\|_{\L(L^2)}^{lip,\O}=\|(Z_2)_{[k]}^{[k]}\|_{\L(L^2)}^{lip,\O} $$
 and thus  \eqref{stimefinali1} implies also \eqref{mu2} for $\e$ small enough.\\
Hence all the hypothesis of Theorem \ref{thm:redu} are satisfied for $L_0=\mathcal{F}_{+}$ and this theorem provides a
set of frequencies $\mathcal{O}_{\epsilon}$ such that, for $\omega\in \mathcal{O}_{\epsilon}$,
there is a 
 map $\Phi_{\infty}$ satisfying \eqref{stimaMappain}
such that $L_0\equiv\mathcal{F}_+$ transforms into $L_{\infty}$ in \eqref{operatoreFinale}.
By \eqref{misura} we have
\[
{\rm meas}(\mathcal{O}_0\setminus\mathcal{O}_{\epsilon})\leq C\gamma\,,
\]
for some constant $C>0$ depending on $s$.
It is also know that (recall \eqref{zeroMel}) 
${\rm meas}([1/2,3/2]^{d}\setminus\mathcal{O}_{0})\leq C\gamma$. Therefore, 
recalling that we set $\gamma=\e^\alpha$
we have that the \eqref{measure} holds. 
For $\omega\in \mathcal{O}_{\e}$
we set 
\[
\Psi(\omega t):=\Phi_{\infty}(\omega t)\circ \Phi(\omega t)\,.
\] 
By construction
the function $v:=\Psi(\omega t)u$ satisfies the equation \eqref{NLSred}
with $\e Z\rightsquigarrow Z_{\infty}$ in \eqref{operatoreFinale}.
Moreover, by 
\eqref{stimefinali3}, \eqref{stimefinali4}, \eqref{stimaMappain} and \eqref{Rregul}, we have
\begin{equation}\label{tripoli5}
\begin{aligned}
\sup_{\vphi\in\T^d}\|\Psi^{\pm1}(\vphi)-{\rm Id}\|_{\L(H^s,H^{s-\delta})}
& \leq  \gamma^{-1}C_{s}\e \,,\\
\sup_{\vphi\in\T^d}\|\Psi^{\pm1}(\vphi)\|_{\L(H^s,H^{s})} &\leq 
1+\gamma^{-1}C_{s}\e\,,
\end{aligned}
\end{equation}
for some $C_{s}>0$. 
This concludes the proof.

\appendix
\section{Technical lemmata}
\subsection{Proof of Lemma \ref{propertiespdo}}\label{appendixAA}
\begin{proof}[{\bf Proof of Lemma \ref{propertiespdo}}]
The bounds \eqref{algSeminorm}, \eqref{algSeminorm2} and \eqref{algSeminorm3}
can be deduced by using the properties of the  
semi-norm  in 
\eqref{action}-\eqref{commuPseudo} and the definition
in \eqref{psdo}.
We give the proof  the bound \eqref{estOmega-1} of item $(iii)$. 
The bound \eqref{estOmega} is similar.
We have that 
 \begin{equation}\label{cinest3}
  B(\omega):=(\omega \cdot\partial_{\vphi})^{-1}A(\omega) 
  = \sum_{0 \neq l \in \mathbb{Z}^d} 
  \frac{1}{\ii \omega \cdot l}e^{il\cdot \vphi} A(\omega;l).
  \end{equation}
  Thus
  \begin{equation}\label{cinest}
      \begin{aligned}
 \big( \Nc_{ m,r-\alpha,p} (&(\omega\cdot \partial_{\vphi})^{-1}A)\big)^{2}
 \stackrel{\eqref{psdo}}{\leq}
 \sum_{0 \neq l\in \mathbb{Z}^{d}} 
 \frac{1}{| \omega \cdot l|^{2}} \langle l \rangle^{2(r-\alpha)} 
 \mathcal{N}_{m,p}^2 (A(l ))
    \\&
   \stackrel{\eqref{diodio}}{\leq} \frac{1}{\gamma^{2}} \sum_{ l\in \mathbb{Z}^{d}\setminus\{0\}} \langle l \rangle^{2(r-\alpha)}
 | l|^{2\alpha}  \mathcal{N}^2_{m,p} (A(l ))
  \\&\le  \frac{C}{\gamma^{2} } \sum_{0 \neq l
  \in \mathbb{Z}^{d}} \langle l \rangle^{2r}
\mathcal{N}^2_{m,p} (A(l ))=\frac{C}{\gamma^{2}}\big(\mathcal{N}_{m,r,p}(A)\big)^{2}\,.
\end{aligned}
\end{equation}
To estimate $\mathcal{N}^{lip,\mathcal{O}}_{m,r-(2\alpha+1),p}(B)$ (see \eqref{suplip})
we reason as follow.
We first note that
\[
\begin{aligned}
B(\omega_1)-B(\omega_2)&=
\sum_{0 \neq l \in \mathbb{Z}^d}
  \frac{1}{\ii \omega_1 \cdot l}e^{il\cdot \vphi} \Big(A(\omega_1;l)-A(\omega_{2};l)\Big)\\
  &+\sum_{0 \neq l \in \mathbb{Z}^d} 
\frac{(\omega_{1}-\omega_2)\cdot l}{\ii (\omega_1\cdot l)(\omega_{2}\cdot l)}e^{il\cdot \vphi} A(\omega_2;l).
\end{aligned}
\]
Moreover, by using \eqref{diodio} and that $\mathcal{O}$ is compact, we have
\[
\left|\frac{(\omega_{1}-\omega_2)\cdot l}{\ii (\omega_1\cdot l)(\omega_{2}\cdot l)}\right|\leq
C \frac{1}{\gamma^{2}}|l|^{2\alpha+1}|\omega_1-\omega_2|\,.
\]
Therefore reasoning as in \eqref{cinest} we get
\begin{equation}\label{cinest2}
\frac{\mathcal{N}_{m,r-(2\alpha+1),p}(B(\omega_1)-B(\omega_2))}{|\omega_1-\omega_2|}\lesssim 
\frac{1}{\gamma}\mathcal{N}_{m,r,p}^{lip,\mathcal{O}}(A)+\frac{1}{\gamma^{2}}\mathcal{N}^{sup,\mathcal{O}}_{m,r,p}(A)\,.
\end{equation}
Combining \eqref{cinest}, \eqref{cinest2} and recalling \eqref{Lip-norm}, \eqref{cinest3}
we obtained
\begin{equation*}
\begin{aligned}
\mathcal{N}^{\gamma,\mathcal{O}}_{m,r-(2\alpha+1),p}(B)&\lesssim 
\frac{1}{\gamma}\mathcal{N}_{m,r,p}^{sup,\mathcal{O}}(A)+
\gamma\left( 
\mathcal{N}_{m,r,p}^{lip,\mathcal{O}}(A)+\frac{1}{\gamma^{2}}\mathcal{N}_{m,r,p}^{sup,\mathcal{O}}(A)
\right)\\
&\lesssim \frac{1}{\gamma}\left(
\mathcal{N}_{m,r,p}^{sup,\mathcal{O}}(A)+
\gamma\mathcal{N}_{m,r,p}^{lip,\mathcal{O}}(A)\right)
\end{aligned}
\end{equation*}
which is bound \eqref{estOmega-1}.
\end{proof}

\subsection{Properties of the $s$-decay norm}\label{techtech}

In this appendix $s_0$ is some fixed number satisfying $s_0> (d+n)/2$.
\begin{lemma}\label{DecayAlg}
Let $\al>0$. 
Then (recall \eqref{Diag}, \eqref{decayNorm2}) 
\begin{equation}\label{beebee}
\bral \mathcal{D}^{\pm\al} A \mathcal{D}^{\mp\al}\brar^{\gamma,\mathcal{O}}_{s}
\le_{s}
\bral A\brar^{\gamma,\mathcal{O}}_{s+\al}\,,
\end{equation}
\begin{equation}\label{beebee2}
\bral\mathcal{D}^{\pm\al} (\Pi_{N}A) \mathcal{D}^{\mp\al}\brar^{\gamma,\mathcal{O}}_{s}
\le_{s}N^{\alpha}
\bral A\brar^{\gamma,\mathcal{O}}_{s}\,.
\end{equation}
\end{lemma}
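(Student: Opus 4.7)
The key observation is that, since $\mathcal{D}$ is a block-diagonal $\vphi$-independent operator with eigenvalues $\lambda_k$ on $E_k$ (see \eqref{Diag}), the blocks of the conjugated operator $\mathcal{D}^{\pm\alpha} A \mathcal{D}^{\mp\alpha}$ are obtained by simply rescaling the blocks of $A$; namely
\[
\big(\mathcal{D}^{\pm\alpha} A \mathcal{D}^{\mp\alpha}\big)_{[k]}^{[k']}(l) = \lambda_k^{\pm\alpha}\lambda_{k'}^{\mp\alpha}\, A_{[k]}^{[k']}(l),
\qquad l\in\Z^d,\; k,k'\in\N.
\]
Consequently, passing to $L^2$-operator norms on the blocks,
\[
\|\big(\mathcal{D}^{\pm\alpha} A \mathcal{D}^{\mp\alpha}\big)_{[k]}^{[k']}(l)\|_{\L(L^2)}
= \Big(\tfrac{\lambda_k}{\lambda_{k'}}\Big)^{\pm\alpha}\|A_{[k]}^{[k']}(l)\|_{\L(L^2)}.
\]

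The central inequality I plan to use is that, in view of \eqref{equiweight}, for any $k,k'\in\N$ one has
\[
\max\Big(\tfrac{\lambda_k}{\lambda_{k'}},\tfrac{\lambda_{k'}}{\lambda_k}\Big)
\;\lesssim\; \tfrac{\langle k\rangle+\langle k'\rangle}{\min(\langle k\rangle,\langle k'\rangle)}
\;\lesssim\; \langle k-k'\rangle,
\]
since $\langle k\rangle+\langle k'\rangle \leq 2\min(\langle k\rangle,\langle k'\rangle)+|k-k'|$. Thus, uniformly in the sign choice,
\begin{equation}\label{planeq}
\|\big(\mathcal{D}^{\pm\alpha} A \mathcal{D}^{\mp\alpha}\big)_{[k]}^{[k']}(l)\|_{\L(L^2)}
\;\leq\; C_\alpha \langle k-k'\rangle^{\alpha}\,\|A_{[k]}^{[k']}(l)\|_{\L(L^2)}.
\end{equation}

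To obtain \eqref{beebee}, I will plug \eqref{planeq} into Definition \ref{decayNorm}; for $|k-k'|=h$ the extra factor is $\langle h\rangle^\alpha$, hence absorbing it into the weight $\langle l,h\rangle^{2s}$ gives $\langle l,h\rangle^{2s}\langle h\rangle^{2\alpha}\leq \langle l,h\rangle^{2(s+\alpha)}$, and the sup-norm part of $\bral \mathcal{D}^{\pm\alpha} A \mathcal{D}^{\mp\alpha}\brar^{sup,\mathcal{O}}_s$ is controlled by $\bral A\brar^{sup,\mathcal{O}}_{s+\alpha}$. Since $\mathcal{D}$ does not depend on $\omega$, exactly the same argument applied to $A(\omega_1)-A(\omega_2)$ controls the Lipschitz seminorm by $\bral A\brar^{lip,\mathcal{O}}_{s+\alpha}$, and recombining via \eqref{decayLip} yields the $\gamma$-weighted inequality. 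The estimate \eqref{beebee2} is then the same argument restricted to $\Pi_N A$: by the definition \eqref{cutOff} of $\Pi_N$ only indices with $|k-k'|=h\leq N$ contribute, so in \eqref{planeq} one gains $\langle h\rangle^\alpha \leq \langle N\rangle^\alpha\lesssim N^\alpha$, which can be factored out of the sum, producing the $N^\alpha\bral A\brar_s^{\gamma,\mathcal{O}}$ bound.

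I do not expect any serious obstacle here: the entire argument is a pointwise comparison between matrix coefficients followed by an absorption of $\langle k-k'\rangle^\alpha$ into the decay weight. The only mild care needed is in verifying the elementary inequality $\max(\lambda_k/\lambda_{k'},\lambda_{k'}/\lambda_k)\lesssim \langle k-k'\rangle$ using \eqref{equiweight}, and in noting that the argument is insensitive to the sign in the exponent, so the same proof handles both $\mathcal{D}^{\alpha}A\mathcal{D}^{-\alpha}$ and $\mathcal{D}^{-\alpha}A\mathcal{D}^{\alpha}$ simultaneously.
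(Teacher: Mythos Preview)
Your proposal is correct and follows exactly the natural approach: identify the block of $\mathcal{D}^{\pm\alpha}A\mathcal{D}^{\mp\alpha}$ as $(\lambda_k/\lambda_{k'})^{\pm\alpha}A_{[k]}^{[k']}(l)$, bound the ratio by $C\langle k-k'\rangle^{\alpha}$ via \eqref{equiweight}, and absorb this into the decay weight (respectively bound it by $N^{\alpha}$ after the cut-off \eqref{cutOff}). The paper does not spell out a proof here but simply refers to Lemma~A.1 in \cite{FG1}, which is precisely this argument; so your write-up is effectively what the reference contains.
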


\begin{proof}
The bounds \eqref{beebee}, \eqref{beebee2} follow
by reasoning as in the proof of Lemma $A.1$ in \cite{FG1}
and using the \eqref{cutOff}.
\end{proof}

\begin{lemma}\label{decayspaziotempo}
Let $A$ be a matrix as in \eqref{timeMat}
with finite $\bral \cdot\brar_{s}$-norm (see \eqref{decayNorm2}).
 Then (recall \eqref{decayNorm1}) one has
\[
\|A(\vphi)\|_{\L(H^s,H^s)}\leq_s |A(\vphi)|_{s}
 \leq_{s}
 \bral A\brar_{s+s_0}\,,\quad \forall\,\vphi\in \mathbb{T}^{d}\,.
\]
\end{lemma}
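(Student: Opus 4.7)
The plan is to prove the two inequalities separately, as both are routine manipulations of the $s$-decay norm.

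For the bound $|A(\vphi)|_s \leq_s \bral A\brar_{s+s_0}$, I first expand $A(\vphi) = \sum_{l\in\Z^d} A(l) e^{\ii l\cdot\vphi}$ according to \eqref{timeMat}, which yields the pointwise estimate $\|A_{[k]}^{[k']}(\vphi)\|_{\L(L^2)} \leq \sum_l \|A_{[k]}^{[k']}(l)\|_{\L(L^2)}$. I would then apply Cauchy--Schwarz with the weight $\langle l\rangle^{-2s_0}$, whose sum over $\Z^d$ is finite precisely because the standing hypothesis $s_0 > (d+n)/2$ ensures $s_0 > d/2$. After squaring, plugging into Definition \ref{decayNorm}(i), exchanging the $\sup_{|k-k'|=h}$ with the sum over $l$ (a trivial inequality), and using the elementary bound $\langle h\rangle^{2s}\langle l\rangle^{2s_0} \leq \langle l,h\rangle^{2(s+s_0)}$, one recovers $\bral A\brar_{s+s_0}^2$ via \eqref{decayNorm2}.

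For the bound $\|A(\vphi)\|_{\L(H^s,H^s)} \leq_s |A(\vphi)|_s$, I view this as the $\vphi$-frozen spatial analogue of Lemma \ref{lem:tame}(i). The starting point is the block decomposition $(A(\vphi)u)_{[k]} = \sum_{k'} A_{[k]}^{[k']}(\vphi) u_{[k']}$ together with the elementary decay $\|A_{[k]}^{[k']}(\vphi)\|_{\L(L^2)} \leq |A(\vphi)|_s \langle k-k'\rangle^{-s}$ that follows from \eqref{decayNorm1}. The classical weight-splitting $\langle k\rangle^s \leq C(s)(\langle k-k'\rangle^s + \langle k'\rangle^s)$ combined with a Young/Schur-type convolution estimate in the block index yields the tame bound
\[
\|A(\vphi)u\|_{H^s} \leq_s |A(\vphi)|_s\|u\|_{H^{s_0}} + |A(\vphi)|_{s_0}\|u\|_{H^s}.
\]
The desired conclusion then follows from the trivial monotonicity $|A(\vphi)|_{s_0} \leq |A(\vphi)|_s$ (and $\|u\|_{H^{s_0}} \leq \|u\|_{H^s}$), which merges both summands into $|A(\vphi)|_s\|u\|_{H^s}$ up to a constant depending only on $s$.

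No substantive obstacle arises. The only bookkeeping point worth flagging is the role of the regularity $s_0$ lost in the $\vphi$-direction in the second inequality: it is dictated by the summability threshold $s_0 > d/2$ that controls $\sum_l \langle l\rangle^{-2s_0}$ in the Cauchy--Schwarz step, a condition automatically satisfied by the paper's standing assumption on $s_0$. Both inequalities are already present, in essentially the form above, in \cite{BCP} and are invoked inside the proof of Lemma \ref{lem:tame}(i).
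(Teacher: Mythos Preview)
Your proposal is correct and spells out precisely the standard argument that the paper defers to: the paper's own proof is a one-line reference to Lemma~2.4 in \cite{BBM14}, whose content is exactly the Cauchy--Schwarz-in-$l$ plus Schur/convolution-in-$k$ computation you outline. So there is no alternative approach to compare; you have simply written out what the citation contains.
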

\begin{proof}
See Lemma $2.4$ in \cite{BBM14}.
\end{proof}

\begin{lemma}\label{well-well}
Assume that
\begin{equation}\label{mito30}
C(s) \bral A\brar_{s_0}^{\gamma,\mathcal{O}}\leq 1/2
\end{equation}
for some large $C(s)>0$ depending on $s\geq s_0$.
Then the map $\Phi:={\rm Id}+\Psi$ defined as
\begin{equation}\label{exp}
\Phi:=e^{\ii A}:=\sum_{p\geq0}\frac{1}{p!}(\ii A)^{p}\,,
\end{equation}
satisfies
\begin{equation}\label{brackettech}
\bral \Psi\brar_{s}^{\gamma,\mathcal{O}}
\leq_{s} \bral A\brar_{s}^{\gamma,\mathcal{O}}\,,
\end{equation}
\end{lemma}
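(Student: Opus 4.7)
The plan is to expand the exponential as a power series and control each term by iterating the tame product estimate \eqref{decayTame}. Writing $\Psi = e^{\ii A} - \mathrm{Id} = \sum_{p\geq 1} \frac{(\ii A)^{p}}{p!}$, the triangle inequality for the decay norm reduces the task to a tame bound on $\bral A^{p}\brar_{s}^{\gamma,\mathcal{O}}$. Setting $a := \bral A\brar_{s_0}^{\gamma,\mathcal{O}}$ and $b := \bral A\brar_{s}^{\gamma,\mathcal{O}}$, I would first apply \eqref{decayTame} in ``low norm'' to get $\bral A^{p}\brar_{s_0}^{\gamma,\mathcal{O}}\leq (2C(s_0) a)^{p-1}\, a$, and then in ``high norm'' to obtain inductively
\[
\bral A^{p}\brar_{s}^{\gamma,\mathcal{O}} \leq p\, b\, (2C(s) a)^{p-1},
\]
up to absorbing $C(s_0)\leq C(s)$ in the constants.

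Assuming this inductive claim, summing the series gives
\[
\bral \Psi\brar_{s}^{\gamma,\mathcal{O}} \leq \sum_{p\geq 1}\frac{p\, b\, (2C(s)a)^{p-1}}{p!} = b\, \exp\!\big(2C(s)a\big),
\]
and the smallness hypothesis \eqref{mito30}, namely $C(s) a \leq 1/2$, bounds the exponential by an absolute constant, yielding \eqref{brackettech}.

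The only genuine work is the inductive step: one splits $\bral A^{p+1}\brar_{s}^{\gamma,\mathcal{O}} = \bral A\cdot A^{p}\brar_{s}^{\gamma,\mathcal{O}}$ via \eqref{decayTame}, using the low-norm bound for $A^{p}$ in one factor and the inductive high-norm bound in the other; the two resulting contributions combine to give the factor $(p+1)$. I would carry this out first for the sup part of $\bral\cdot\brar_{s}^{\gamma,\mathcal{O}}$ and then repeat verbatim for the Lipschitz seminorm, since Lemma \ref{lem:tame} is stated for the full Lipschitz decay norm (see \eqref{decayLip}).

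The main (minor) obstacle is book-keeping of the constants $C(s)$ so that the geometric factor $2C(s)a$ is absorbed by the smallness assumption \eqref{mito30} and the tail of the exponential series is summable uniformly in $s$; this is precisely the role of choosing the constant in \eqref{mito30} sufficiently large depending on $s$. No new analytic input is needed beyond the tame product estimate \eqref{decayTame}.
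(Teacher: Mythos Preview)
Your proposal is correct and follows essentially the same argument as the paper: iterate the tame product estimate \eqref{decayTame} to obtain $\bral A^{n}\brar_{s_0}\lesssim (C a)^{n-1}a$ and $\bral A^{n}\brar_{s}\lesssim n(C a)^{n-1}b$, then sum the exponential series and absorb the constant via the smallness assumption \eqref{mito30}. The paper's proof is slightly terser (it asserts the inductive bounds directly and handles the Lipschitz norm by the single remark that the same estimates hold for $\bral\cdot\brar_{s}^{\gamma,\mathcal{O}}$), but the content is identical.
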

\begin{proof}
For any $n\geq1$, using \eqref{decayTame}, we have, for some $C(s)>0$,
\[
\begin{aligned}
\bral A^{n}\brar_{s_0}&\leq [C(s_0)]^{n-1}\bral A\brar^{n}_{s_0}\,,\\
\bral A^{n}\brar_{s}&\leq n[C(s) \bral A\brar_{s_0}]^{n-1}
C(s) \bral A\brar_{s}\,,\quad \forall\, s\geq s_0\,.
\end{aligned}
\]
The same holds also for the norm $\bral\cdot\brar_{s}^{\gamma,\mathcal{O}}$
Hence
\[
\bral \Psi\brar_{s}^{\gamma,\mathcal{O}}
\leq
\bral A\brar_{s}^{\gamma,\mathcal{O}}
\sum_{p\geq 1}\frac{C(s)^{p}}{p!}
(\bral A\brar_{s_0}^{\gamma,\mathcal{O}})^{p-1}\,,
\]
for some (large) $C(s)>0$. By the smallness condition \eqref{mito30}
one deduces   the bounds \eqref{brackettech}.
\end{proof}

\begin{lemma}\label{DecayAlg2}
Let $\alpha,\beta\in \mathbb{R}$.
Then
\begin{align}
&\bral A M\brar_{\alpha+\beta,s}^{\gamma,\mathcal{O}}
\le_{s} \bral A\brar^{\gamma,\mathcal{O}}_{\alpha,s+|\beta|}
\bral M\brar^{\gamma,\mathcal{O}}_{\beta,s_0+|\alpha|}
+
\bral A\brar^{\gamma,\mathcal{O}}_{\alpha,s_0+|\beta|}
\bral M\brar^{\gamma,\mathcal{O}}_{\beta,s+|\alpha|}
\,,\label{stim2}\\
&\bral({\rm Id}- \Pi_{N})M\brar^{\gamma,\mathcal{O}}_{\beta,s}\le_{s} 
N^{-\mathfrak{s}}
\bral M\brar^{\gamma,\mathcal{O}}_{\beta,s+\mathfrak{s}}\,,
\qquad \mathfrak{s}\geq 0\,.\label{stim3}
\end{align}
Moreover, if $\alpha\leq\beta< 0$ then
\begin{equation}\label{stim1}
\bral AM \brar^{\gamma,\mathcal{O}}_{\beta,s}
\le_{s}\bral A\brar^{\gamma,\mathcal{O}}_{\alpha,s}
\bral M\brar^{\gamma,\mathcal{O}}_{\beta,s_0}+
\bral A\brar^{\gamma,\mathcal{O}}_{\alpha,s_0}
\bral M\brar^{\gamma,\mathcal{O}}_{\beta,s}\,.
\end{equation}
\end{lemma}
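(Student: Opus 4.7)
I will prove the three estimates separately, each starting from the decomposition
$\bral X\brar_{\beta,s}=\bral\mathcal{D}^{\beta}X\brar_{s}+\bral X\mathcal{D}^{\beta}\brar_{s}$ and exploiting the tame product bound \eqref{decayTame} from Lemma \ref{lem:tame}. An elementary ingredient I will use throughout is that $\lambda_k\sim k\geq 1$, hence $\lambda_k/\lambda_{k'}\leq 1+|k-k'|/\lambda_{k'}\leq C\langle k-k'\rangle$, so $(\lambda_k/\lambda_{k'})^a\leq C\langle k-k'\rangle^{|a|}$ for any $a\in\R$. This is a direct generalisation of Lemma \ref{DecayAlg} to asymmetric conjugations:
\[
\bral\mathcal{D}^{a}X\mathcal{D}^{b}\brar_s \le_s \bral\mathcal{D}^{a+b}X\brar_{s+|b|}\,,\qquad \bral\mathcal{D}^{a}X\mathcal{D}^{b}\brar_s \le_s \bral X\mathcal{D}^{a+b}\brar_{s+|a|}\,,
\]
obtained by pulling the factor $\lambda_k^a\lambda_{k'}^b$ into either $\lambda_k^{a+b}(\lambda_{k'}/\lambda_k)^{b}$ or $\lambda_{k'}^{a+b}(\lambda_k/\lambda_{k'})^a$ and absorbing the ratio into the block-distance weight.

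\textbf{Estimate \eqref{stim3}.} Since $\mathcal{D}^\beta$ is diagonal in the block index $k$ and independent of the Fourier index $l\in\mathbb{Z}^d$, it commutes with the cut-off $\Pi_N$ defined in \eqref{cutOff}. Hence $\mathcal{D}^\beta(\mathrm{Id}-\Pi_N)M=(\mathrm{Id}-\Pi_N)\mathcal{D}^\beta M$ and $(\mathrm{Id}-\Pi_N)M\mathcal{D}^\beta=M\mathcal{D}^\beta (\mathrm{Id}-\Pi_N)$ (this last equality is not needed but already $(\mathrm{Id}-\Pi_N)(M\mathcal{D}^\beta)$ works). Applying \eqref{smoothesti} to each of $\mathcal{D}^\beta M$ and $M\mathcal{D}^\beta$ gives \eqref{stim3} at once.

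\textbf{Estimate \eqref{stim2}.} I decompose
\[
\mathcal{D}^{\alpha+\beta}AM=(\mathcal{D}^{\alpha+\beta}A\mathcal{D}^{-\beta})\,(\mathcal{D}^{\beta}M),\qquad AM\mathcal{D}^{\alpha+\beta}=(A\mathcal{D}^{\alpha})\,(\mathcal{D}^{-\alpha}M\mathcal{D}^{\alpha+\beta}),
\]
apply \eqref{decayTame} to each product, and control the asymmetric conjugations by the elementary inequality above to obtain
\[
\bral\mathcal{D}^{\alpha+\beta}A\mathcal{D}^{-\beta}\brar_{s}\le_s\bral A\brar_{\alpha,s+|\beta|}\,,\qquad \bral\mathcal{D}^{-\alpha}M\mathcal{D}^{\alpha+\beta}\brar_{s}\le_s\bral M\brar_{\beta,s+|\alpha|}\,.
\]
Summing the two contributions and passing to Lipschitz norms gives \eqref{stim2}.

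\textbf{Estimate \eqref{stim1}.} This is the delicate case, since the output weight is only $\beta$ (rather than the full $\alpha+\beta$) and no shift in $s$ is allowed. The point is to use the sign hypothesis $\alpha\le\beta<0$ in two complementary ways. First, since $\alpha-\beta\le 0$ and $\lambda_{k}\ge 1$, one has $\lambda_{k}^{\alpha-\beta}\le 1$, hence $\bral\mathcal{D}^{\alpha}M\brar_s\le \bral\mathcal{D}^{\beta}M\brar_s\le\bral M\brar_{\beta,s}$, so the stronger weight on $M$ is available for free. Second, for $\mathcal{D}^{\beta}AM=(\mathcal{D}^{\beta}A\mathcal{D}^{-\alpha})(\mathcal{D}^{\alpha}M)$, the tame product reduces the problem to bounding $\bral\mathcal{D}^{\beta}A\mathcal{D}^{-\alpha}\brar_s$ by $\bral A\brar_{\alpha,s}$, which, after writing $\lambda_{k}^{\beta}\lambda_{k'}^{-\alpha}=\lambda_{k'}^{\beta-\alpha}(\lambda_{k}/\lambda_{k'})^{\beta}$ (or the symmetric reshuffling) and transferring the positive exponent $\beta-\alpha$ back onto an $A\mathcal D^\alpha$ factor, is achieved thanks to $\beta<0$ so that no $s$-gain is incurred. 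The symmetric half $\bral AM\mathcal{D}^{\beta}\brar_s$ is handled in the same way using $(A\mathcal{D}^{\alpha})(\mathcal{D}^{-\alpha}M\mathcal{D}^{\beta})$.

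The main obstacle will be the bookkeeping in \eqref{stim1}: absorbing the non-negative surplus $\beta-\alpha$ purely into the block-distance weight $\langle k-k'\rangle$ via the elementary $(\lambda_k/\lambda_{k'})$-inequality, without inflating the $s$-index. Estimates \eqref{stim3} and \eqref{stim2} are by contrast essentially routine consequences of Lemma \ref{lem:tame} and of the conjugation bound of Lemma \ref{DecayAlg}. The Lipschitz version of each bound is then obtained by the usual telescoping $A(\omega_1)B(\omega_1)-A(\omega_2)B(\omega_2)=(A(\omega_1)-A(\omega_2))B(\omega_1)+A(\omega_2)(B(\omega_1)-B(\omega_2))$, which is linear and so costs nothing beyond what has already been done in the sup norm.
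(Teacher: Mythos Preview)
Your arguments for \eqref{stim2} and \eqref{stim3} are correct and coincide with what the paper does (the paper simply refers to Lemma~A.3 in \cite{FG1} and to Lemma~\ref{DecayAlg}, which amount to exactly your asymmetric conjugation bound plus the tame product \eqref{decayTame}, respectively the commutation of $\mathcal{D}^{\beta}$ with $\Pi_N$ followed by \eqref{smoothesti}).

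For \eqref{stim1} there is a genuine gap. The step you isolate as the crux --- bounding $\bral\mathcal{D}^{\beta}A\mathcal{D}^{-\alpha}\brar_{s}$ by $\bral A\brar_{\alpha,s}$ --- is false under the stated hypothesis $\alpha\le\beta<0$. Take $A=\mathcal{D}^{-\alpha}$: then $\mathcal{D}^{\beta}A\mathcal{D}^{-\alpha}=\mathcal{D}^{\beta-2\alpha}$, and since $\beta-2\alpha\ge -\beta>0$ this has infinite $s$-decay norm, whereas $\bral A\brar_{\alpha,s}=2\bral\mathrm{Id}\brar_{s}=2$. Your reshuffling $\lambda_k^{\beta}\lambda_{k'}^{-\alpha}=\lambda_{k'}^{\beta-\alpha}(\lambda_k/\lambda_{k'})^{\beta}$ produces the factor $\lambda_{k'}^{\beta-\alpha}\ge 1$, which cannot be ``transferred back onto an $A\mathcal{D}^{\alpha}$ factor'' without increasing the weight from $\alpha$ to $\beta$ --- and $\bral A\brar_{\beta,s}\ge\bral A\brar_{\alpha,s}$, not the reverse. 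In fact the same choice $A=\mathcal{D}^{-\alpha}$, $M=\mathcal{D}^{-\beta}$ shows that \eqref{stim1} itself fails as written: $\bral AM\brar_{\beta,s}=2\bral\mathcal{D}^{-\alpha}\brar_{s}=\infty$ while the right-hand side is finite.

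The sign condition is almost certainly a misprint in the paper; the intended hypothesis is $0<\beta\le\alpha$. Under that assumption the argument the paper has in mind (``follows by Lemma~\ref{DecayAlg}'') is immediate: since $\lambda_k^{\beta-\alpha}\le C$ one has $\bral\mathcal{D}^{\beta}AM\brar_{s}\le C\bral(\mathcal{D}^{\alpha}A)M\brar_{s}$ and $\bral AM\mathcal{D}^{\beta}\brar_{s}\le C\bral A(M\mathcal{D}^{\beta})\brar_{s}$, and then the tame product \eqref{decayTame} together with $\bral X\brar_{s}\le C\bral X\brar_{\mu,s}$ for $\mu>0$ (from $\lambda_k\ge c>0$) gives exactly \eqref{stim1}.
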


\begin{proof}
To prove \eqref{stim2} 
one reasons as in Lemma $A.3$ in \cite{FG1}.
The \eqref{stim1} and \eqref{stim3} follow by Lemma \ref{DecayAlg}.
\end{proof}

 \begin{lemma}\label{AzioneDec}
One has
\begin{equation}\label{mito}
\|\mathcal{D}^{\beta} A h\|_{\ell_{s}}\le_{s}
\bral A\brar_{\beta,s}
\|h\|_{\ell_{s_0}}+
\bral A\brar_{\beta,s_0}
\|h\|_{\ell_{s}}\,,
\end{equation}
for any $h\in\ell_{s}$ (see \eqref{elleNorma}) and $\beta\in \mathbb{R}$.
\end{lemma}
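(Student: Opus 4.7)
The plan is to reduce Lemma \ref{AzioneDec} immediately to the tame action estimate already established for the pure $s$-decay norm in Lemma \ref{lem:tame}, item $(i)$. The key observation is that $\mathcal{D}^{\beta}A$ is itself a $\vphi$-dependent matrix (of the form \eqref{timeMat}), whose blocks are $(\mathcal{D}^{\beta}A)_{[k]}^{[k']}(l) = \lambda_k^{\beta}A_{[k]}^{[k']}(l)$, and by the very definition \eqref{decayNorm3} one has the trivial bound
\[
\bral \mathcal{D}^{\beta} A\brar_s \;\le\; \bral A\brar_{\beta,s}\qquad \forall s\ge s_0,
\]
since $\bral A\brar_{\beta,s} = \bral \mathcal{D}^{\beta}A\brar_s + \bral A\mathcal{D}^{\beta}\brar_s$.

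First I would set $M:=\mathcal{D}^{\beta}A$ and view it as a matrix in $\mathcal{M}_s$. Then I would apply Lemma \ref{lem:tame} $(i)$ directly to $M$ acting on $h\in \ell_s$, which yields
\[
\|Mh\|_{\ell_s} \;\le_s\; \bral M\brar_s\,\|h\|_{\ell_{s_0}} + \bral M\brar_{s_0}\,\|h\|_{\ell_s}.
\]
Substituting the bound on $\bral M\brar_s$ above into both terms gives precisely \eqref{mito}. No separate treatment of the sign of $\beta$ is needed: the definition of $\bral\cdot\brar_{\beta,s}$ already symmetrizes over left and right multiplication by $\mathcal{D}^{\beta}$, but only the left version is used here.

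There is no genuine obstacle to overcome: the whole content of the lemma is packaged in two steps, the comparison of norms implicit in \eqref{decayNorm3} and the tameness of the $s$-decay action \eqref{azio}. The only point deserving a brief verification is that Lemma \ref{lem:tame} $(i)$ applies to matrices in the time-dependent class $\mathcal{M}_s$ (so that the $\ell_s$-norm on the left-hand side, which incorporates both spatial and $\vphi$-regularity as in \eqref{elleNorma}, is correctly controlled), which is indeed its stated form.
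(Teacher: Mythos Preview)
Your argument is correct and is precisely the natural route: apply the tame action estimate \eqref{azio} of Lemma~\ref{lem:tame} to the matrix $M=\mathcal{D}^{\beta}A$ and then bound $\bral \mathcal{D}^{\beta}A\brar_{s}\le \bral A\brar_{\beta,s}$ directly from the definition \eqref{decayNorm3}. The paper's own proof merely points to Lemma~A.4 in \cite{FG1}, which establishes the same inequality by the same mechanism, so your approach matches.
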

\begin{proof}
One reasons as in Lemma $A.4$ in \cite{FG1}.
\end{proof}

\subsection{Flows of pseudo differential operators}\label{AppC}

\begin{lemma}\label{lemmaB1}
Fix $m\leq0$, $0\leq \delta\leq1$, $r>d/2$ and $\rho\geq0$
and consider $S_1\in\mathcal{A}_{m,r}^{\gamma,\mathcal{O}}$ and
$S_{2}\in \mathcal{A}_{\delta,r}^{\gamma,\mathcal{O}}$  
(see Definition \ref{def:pseudotempo}).
Assume also that 
\begin{equation}\label{lemmaB111}
[S_2,K_0]=0\,, \quad \langle S_2 h, v\rangle=\langle h, S_2v\rangle
\end{equation}
where $\langle \cdot,\cdot\rangle$ is the standard $L^{2}$ scalar product.
Let us define
\begin{equation}\label{flows}
\Phi_1^{\tau}:=\Phi_1^{\tau}(\vphi):=e^{\tau \ii S_1}\,, \qquad 
\Phi_2^{\tau}:=\Phi_2^{\tau}(\vphi):=e^{\tau \ii S_2}\,.
\end{equation}
For any $s\geq 0$ 
there are $\e_0, C,p>0$ such that,
for any $0<\e\leq \e_0$, 
 if
\begin{equation}\label{smallcondition}
\mathcal{N}^{\gamma,\mathcal{O}}_{m,r,p}(S_1)
+\mathcal{N}^{\gamma,\mathcal{O}}_{\delta,r,p}(S_2)\leq \e,
\end{equation}
then the following holds true:

\noindent
$(i)$ the map $\Phi_{1}^{\tau}$ satisfies 
\begin{align}
\sup_{\vphi\in\mathbb{T}^{d}}\|\Phi_1^{\tau}(\vphi)
-{\rm Id}\|^{\gamma,\mathcal{O}}_{\mathcal{L}(H^{s};H^{s-m})}
&\leq
C \mathcal{N}^{\gamma,\mathcal{O}}_{m,r,p}(S_1)\,,\label{flusso11}\\
\sup_{\vphi\in \mathbb{T}^{d}}\|(\pa^{k}_{\vphi}\Phi_1^{\tau})(\vphi)\|^{\gamma,\mathcal{O}}_{\L(H^{s},H^s)}
&\leq  
C \mathcal{N}^{\gamma,\mathcal{O}}_{\delta,r,p}(S_1)\,,\quad 0\leq k\leq r\,,\label{flusso12}
\end{align}
for any $\tau\in [0,1]$;

\noindent
$(ii)$ the map $\Phi_{2}^{\tau}$ satisfies 
\begin{align}
\sup_{\vphi\in \mathbb{T}^{d}}\|\Phi_2^{\tau}(\vphi)\|_{\L(H^{s},H^s)}&\leq   
(1+C \mathcal{N}^{\gamma,\mathcal{O}}_{\delta,r,p}(S_2))\,,\label{flusso1}\\
\sup_{\vphi\in \mathbb{T}^{d}}\|(\Phi_2^{\tau}(\vphi)-{\rm Id})\|_{\L(H^{s},H^{s-\delta})}
&\leq 
C \mathcal{N}^{\gamma,\mathcal{O}}_{\delta,r,p}(S_2)\,,\label{flusso2}\\
\sup_{\vphi\in \mathbb{T}^{d}}\|(\pa^{k}_{\vphi}\Phi_2^{\tau})(\vphi)\|_{\L(H^{s},H^{s-k\delta})}
&\leq 
C \mathcal{N}^{\gamma,\mathcal{O}}_{\delta,r,p}(S_2)\,,\quad 1\leq k\leq r\,,
\label{flusso3}
\end{align}
for any $\tau\in [0,1]$ and any $\omega\in\mathcal{O}$.
Moreover the following bounds on the Lipschitz norm hold true:
\begin{align}
\sup_{\vphi\in \mathbb{T}^{d}}\|\Phi_2^{\tau}(\vphi)\|^{\gamma,\mathcal{O}}_{\L(H^{s},H^{s-1})}&\leq   
(1+C \mathcal{N}^{\gamma,\mathcal{O}}_{\delta,r,p}(S_2))\,,\label{flusso1bis}\\
\sup_{\vphi\in \mathbb{T}^{d}}\|(\Phi_2^{\tau}(\vphi)-{\rm Id})\|^{\gamma,\mathcal{O}}_{\L(H^{s},H^{s-\delta-1})}
&\leq 
C \mathcal{N}^{\gamma,\mathcal{O}}_{\delta,r,p}(S_2)\,,\label{flusso2bis}\\
\sup_{\vphi\in \mathbb{T}^{d}}
\|(\pa^{k}_{\vphi}\Phi_2^{\tau})(\vphi)\|^{\gamma,\mathcal{O}}_{\L(H^{s},H^{s-k\delta-1})}
&\leq 
C \mathcal{N}^{\gamma,\mathcal{O}}_{\delta,r,p}(S_2)\,,\quad 1\leq k\leq r\,,
\label{flusso3bis}
\end{align}
for   any $\tau\in [0,1]$.
\end{lemma}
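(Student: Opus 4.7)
For part (i), the situation is essentially elementary because $m\leq 0$: the operator $S_1(\vphi)$ maps $H^s$ into $H^{s-m}\supset H^s$ and is bounded on $H^s$, with operator norm controlled by $\Nc^{\gamma,\mathcal{O}}_{m,r,p}(S_1)$ via the action bound \eqref{action} together with Sobolev embedding in $\vphi$ (legitimate for $p$ large enough depending on $s$ and $r>d/2$). I would then define $\Phi_1^{\tau}$ by the series \eqref{exp} and apply the algebra estimate \eqref{algSeminorm} iteratively to show that this series converges absolutely in $\L(H^s,H^s)$, uniformly in $\vphi\in\T^d$ and $\tau\in[0,1]$, as soon as the smallness condition \eqref{smallcondition} is satisfied. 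The bound \eqref{flusso11} comes from writing $\Phi_1^{\tau}-{\rm Id}=\ii\tau\int_{0}^{1}\Phi_1^{\sigma\tau}S_1\,d\sigma$ and exploiting the gain $S_1:H^s\to H^{s-m}$. For \eqref{flusso12}, note that $\pa_{\vphi}^{k}S_1\in\A_{m,r-k}^{\gamma,\mathcal{O}}$ is still of non-positive order, with seminorms controlled using \eqref{estOmega}; iterating a Leibniz/Duhamel expansion yields the claimed derivative bounds.

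For part (ii), the key structural fact is $[S_2(\vphi),K_0]=0$ in conjunction with self-adjointness. Because $K_0$ has pure point spectrum with finite-dimensional eigenspaces $E_k$, the operator $S_2(\vphi)$ preserves each $E_k$, so its restriction is a self-adjoint finite matrix. Consequently $\Phi_2^\tau(\vphi)=e^{\tau\ii S_2(\vphi)}$ is block-diagonal with unitary blocks, commutes with $K_0$, and is therefore unitary on $L^2$. Using $\|u\|_{H^s}\sim\|K_0^{s}u\|_{L^2}$ together with $[\Phi_2^\tau,K_0^s]=0$, $\Phi_2^\tau$ is an isometry on each $H^s$, which yields \eqref{flusso1} (the constant $C\Nc$ absorbs the equivalence constants). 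Estimate \eqref{flusso2} then follows from the Duhamel identity $\Phi_2^\tau-{\rm Id}=\ii\int_0^{\tau}e^{\sigma\ii S_2}S_2\,d\sigma$, combined with the isometry property and with the fact that $S_2$ loses exactly $\delta$ derivatives (again through \eqref{action}).

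For the $\vphi$-derivatives \eqref{flusso3}, I would differentiate the commutator identity $[S_2,K_0]=0$ to conclude that each $\pa_{\vphi}^{j}S_2\in\A_{\delta,r-j}^{\gamma,\mathcal{O}}$ still commutes with $K_0$, so all flows produced along the way remain isometries on $H^s$. A Fa\`a di Bruno/Duhamel expansion represents $\pa_{\vphi}^{k}\Phi_2^\tau$ as a finite sum of terms of the form $\int e^{\sigma_0\ii S_2}(\pa_{\vphi}^{j_1}S_2)e^{\sigma_1\ii S_2}\cdots(\pa_{\vphi}^{j_\ell}S_2)e^{\sigma_\ell\ii S_2}$ with $j_1+\cdots+j_\ell=k$, each factor of pseudo-differential order $\delta$; the isometries absorb all flows and we lose at most $k\delta$ derivatives on the full Sobolev scale. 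The Lipschitz bounds \eqref{flusso1bis}--\eqref{flusso3bis} come from the standard identity
\[
e^{\tau\ii S_2(\omega_1)}-e^{\tau\ii S_2(\omega_2)}=\int_{0}^{\tau}e^{(\tau-\sigma)\ii S_2(\omega_1)}\,\ii(S_2(\omega_1)-S_2(\omega_2))\,e^{\sigma\ii S_2(\omega_2)}\,d\sigma,
\]
where the outer exponentials are isometries on $H^s$ (the commutation with $K_0$ holds at each $\omega$) and the middle factor is estimated by the Lipschitz part of $\Nc^{\gamma,\mathcal{O}}_{\delta,r,p}(S_2)$; the same decomposition, combined with the Fa\`a di Bruno expansion above, handles the $\vphi$-derivatives.

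The main technical point I expect to wrestle with is the bookkeeping for the Lipschitz bounds \eqref{flusso1bis}--\eqref{flusso3bis}, where the target Sobolev space is $H^{s-k\delta-1}$ rather than $H^{s-k\delta}$. The extra loss of one derivative has to come either from converting the $H^{r}(\T^d)$-regularity in $\vphi$ encoded in $\Nc_{\delta,r,p}$ into a pointwise estimate in $\vphi$ (via Sobolev embedding, which costs a derivative) or from comparing the different spectral decompositions of $K_0$-commuting operators $S_2(\omega_1)$ and $S_2(\omega_2)$; one needs to verify that one of these mechanisms is indeed what forces the $-1$ and that it propagates correctly through the Fa\`a di Bruno expansion. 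Apart from this subtlety, the proof is conceptually clean because $[S_2,K_0]=0$ reduces everything to the bounded-operator calculus on each eigenspace $E_k$.
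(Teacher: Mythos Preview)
Your plan is essentially the paper's own proof, only phrased in slightly more explicit spectral language: the paper obtains the $H^s$--isometry of $\Phi_2^\tau$ by computing $\pa_\tau\|\Phi_2^\tau h\|_{H^s}^2=0$ from \eqref{lemmaB111}, which is exactly your block-diagonal/unitarity argument in differential form; for $\Phi_2^\tau-{\rm Id}$ and for the $\vphi$-derivatives it uses the same Duhamel recursion you describe.

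One clarification on the point you flag as the ``main technical point'': neither of the two mechanisms you propose (Sobolev embedding in $\vphi$, or comparing spectral decompositions) is what produces the extra $-1$ in \eqref{flusso1bis}--\eqref{flusso3bis}. The paper simply treats the Lipschitz variation exactly as one more $\vphi$-derivative. Concretely, your difference-of-exponentials identity inserts a single factor $S_2(\omega_1)-S_2(\omega_2)$ of order $\delta$ between two $H^s$-isometries, so the honest loss is $\delta$; since $0\leq\delta\leq1$ one has $H^{s-\delta}\hookrightarrow H^{s-1}$ and the stated bound with $-1$ is just the cruder consequence. The same remark applies to the Lipschitz variation of $\pa_\vphi^k\Phi_2^\tau$: differentiating the Fa\`a di Bruno/Duhamel expansion in $\omega$ inserts one extra order-$\delta$ factor (either $S_2(\omega_1)-S_2(\omega_2)$ or $\pa_\vphi^j S_2(\omega_1)-\pa_\vphi^j S_2(\omega_2)$), giving a loss of $(k+1)\delta\leq k\delta+1$. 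So there is no subtlety to ``verify'' here beyond the inclusion $\delta\leq1$.
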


\begin{proof}
We shall prove the result for the map $\Phi_{2}^{\tau}$. The estimates on $\Phi_{1}^{\tau}$
can be obtained in the same way.
Notice that the operator $\Phi_{2}^{\tau}$ solves the problem
\begin{equation}\label{Cau10}
\left\{\begin{aligned}
&\pa_{\tau}\Phi_{2}^{\tau}(\vphi)=\ii S_{2}(\vphi)\Phi_{2}^{\tau}(\vphi)\\
&\Phi_{2}^{0}(\vphi)={\rm Id}\,.
\end{aligned}\right.
\end{equation}
Using \eqref{Cau10} and the assumption \eqref{lemmaB111}
one can check that
\[
\pa_{\tau}\|\Phi_{2}^{\tau}h\|^{2}_{H^{s}}=0\qquad
\Rightarrow \qquad \|\Phi_{2}^{\tau}h\|_{H^{s}}\leq \|h\|_{H^{s}}\,,
\]
for any $\tau\in [0,1]$, $h\in H^{s}$ and $\vphi\in \mathbb{T}^{d}$. This is the \eqref{flusso1}.
Let us now define 
\[
\Gamma^{\tau}(\vphi):=\Phi_{2}^{\tau}(\vphi)-{\rm Id}\,.
\]
It solve the problem
\[
\pa_{\tau}\Gamma^{\tau}(\vphi)=\ii S_{2}(\vphi)\Gamma^{\tau}(\vphi)+\ii S_{2}(\vphi)\,,\qquad
\Gamma^{0}(\vphi)=0\,.
\]
By Duhamel formula have
\[
\Gamma^{\tau}(\vphi)h=\int_{0}^{\tau}\Phi_{2}^{\tau}\Phi_{2}^{-\s}(\vphi)\ii S_{2}(\vphi)hd\s\,.
\]
Therefore the bound \eqref{flusso2} follows by \eqref{flusso1} and the estimates on $S_2$.
Similarly the operator $(\pa_{\vphi}\Phi_{2}^{\tau})(\vphi)$
satisfies 
\begin{equation}\label{flusso4}
\left\{
\begin{aligned}
&\pa_{\tau}(\pa_{\vphi}\Phi_{2}^{\tau})(\vphi)=\ii S_{2}(\vphi)(\pa_{\vphi}\Phi_{2}^{\tau})(\vphi)
+\ii (\pa_{\vphi}S_2)(\vphi)\Phi_{2}^{\tau}(\vphi)\,,\\
&(\pa_{\vphi}\Phi_{2}^{0})(\vphi)=0\,.
\end{aligned}\right.
\end{equation}
We have that
\[
\sup_{\vphi\in \mathbb{T}^{d}}\|(\pa_{\vphi}S_2)(\vphi)\Phi_{2}^{\tau}(\vphi)h\|_{H^{s-\delta}}
\lesssim \|h\|_{H^{s}}
\mathcal{N}^{\gamma,\mathcal{O}}_{\delta,r,p}(S_2)
\]
by \eqref{flusso1} and the fact that $r>d/2$. Hence, using Duhamel formula and the \eqref{flusso1}, we deduce the \eqref{flusso3} for $k=1$. The \eqref{flusso3}
for $k>1$ can be obtained in the same way by differentiating \eqref{flusso4}.
The Lipschitz bounds \eqref{flusso1bis}-\eqref{flusso3bis} 
follows reasoning as in the estimates   of $\pa_{\vphi}\Phi_{2}^{\tau}(\vphi)$.
The bounds \eqref{flusso11}, \eqref{flusso12}
can be deduced reasoning as done above and using
the fact that the generator $\ii S_1(\vphi)$ is a \emph{bounded} pseudodifferential operator. 
\end{proof}

\begin{lemma}\label{lemmaB2}
Let $r_1\geq 0$ and $r>r_1+d/2$, $\delta>0$, $\rho_1>0$, $\rho:=\rho_1+\delta r_1+1$ and
consider  
 $R\in \mathcal{R}_{\rho,r}^{\gamma,\mathcal{O}}$ 
(see Definition \ref{Rrho}). 
Consider also the map $\Phi_{2}(\vphi):=\Phi_2^{\tau}(\vphi)_{|\tau=1}$,
where $\Phi_{2}^{\tau}(\vphi)$ is  given in Lemma \ref{lemmaB1}.
Then 
 $G_2(\vphi):=\Phi_{2}(\vphi) R(\vphi) \Phi_{2}^{-1}(\vphi)$ belongs to 
$\mathcal{R}^{\gamma,\mathcal{O}}_{\rho_1,r_1}$.
Moreover for any $s\geq0$ there exist $p$ and $C$ such that
\begin{align}
|G_2|^{\gamma,\mathcal{O}}_{\rho_1,r_1,s}
&\leq  |R|^{\gamma,\mathcal{O}}_{\rho,r,s}
(1+C\mathcal{N}_{\delta,r,p}^{\gamma,\mathcal{O}}(S_2))\,.\label{ubdd1}
\end{align}
\end{lemma}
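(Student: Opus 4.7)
My plan is to reduce the estimate to a Leibniz expansion of $\partial_\vphi^k G_2$ for $k\leq r_1$, and then exploit the quantitative loss-of-regularity bounds for $\Phi_2$ and its derivatives provided by Lemma \ref{lemmaB1}. The key observation is that the flow $\Phi_2^\tau$ and its inverse are bounded on every $H^s$ (at pointwise level in $\vphi$) by \eqref{flusso1}, so that the pointwise norm of $G_2(\vphi)$ in $\mathcal L(H^s,H^{s+\rho_1})$ is essentially controlled by the norm of $R(\vphi)$ in $\mathcal L(H^s,H^{s+\rho})$ together with the trivial embedding $H^{s+\rho}\hookrightarrow H^{s+\rho_1}$ valid since $\rho\geq \rho_1$.

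The substantive step is to track derivatives in $\vphi$. By Leibniz,
\[
\partial_\vphi^k G_2 \;=\; \sum_{k_1+k_2+k_3=k}\binom{k}{k_1,k_2,k_3}\,(\partial_\vphi^{k_1}\Phi_2)\,(\partial_\vphi^{k_2}R)\,(\partial_\vphi^{k_3}\Phi_2^{-1}),
\]
and I would chain the mapping properties of each factor as follows:
\[
H^{s}\;\xrightarrow{\partial_\vphi^{k_3}\Phi_2^{-1}}\; H^{s-k_3\delta}\;\xrightarrow{\partial_\vphi^{k_2}R}\; H^{s-k_3\delta+\rho}\;\xrightarrow{\partial_\vphi^{k_1}\Phi_2}\; H^{s+\rho-(k_1+k_3)\delta},
\]
using \eqref{flusso3} (and \eqref{flusso1} when $k_1=0$ or $k_3=0$). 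To land in $H^{s+\rho_1}$ one needs $\rho\geq \rho_1+(k_1+k_3)\delta$, and since $k_1+k_3\leq k\leq r_1$ this is guaranteed by $\rho\geq \rho_1+\delta r_1$. The combinatorial factor is harmless since the Leibniz sum has only polynomially many terms of fixed total order. The norm $|\cdot|_{\rho_1,r_1,s}$ being an $L^2$-in-$\vphi$ norm (with Fourier weight $\langle\ell\rangle^{r_1}$), I would then treat the product pointwise using the trivial $L^\infty$ bound on derivatives of the smooth factors $\partial_\vphi^{k_1}\Phi_2,\partial_\vphi^{k_3}\Phi_2^{-1}$ (coming from \eqref{flusso3} together with Sobolev embedding in $\vphi$, which requires exactly $r>r_1+d/2$) and the $L^2$-in-$\vphi$ bound on $\partial_\vphi^{k_2}R$ inherited from the definition of $|R|_{\rho,r,s}$. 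Summing over $(k_1,k_2,k_3)$ with $k_1+k_2+k_3\leq r_1$ yields \eqref{ubdd1} in its supremum part.

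The Lipschitz part of the norm $|\cdot|^{\gamma,\mathcal O}$ is handled in the same way but now invokes the Lipschitz bounds \eqref{flusso1bis}--\eqref{flusso3bis}, which each cost an additional order of regularity; this is exactly the role of the extra $+1$ in the assumption $\rho=\rho_1+\delta r_1+1$. Writing the difference $G_2(\omega_1)-G_2(\omega_2)$ via a telescoping identity on the three factors $\Phi_2, R, \Phi_2^{-1}$, and controlling each piece with the Lipschitz estimates, completes the argument. The main obstacle, and the step where one needs to be careful, is the bookkeeping of where regularity is being lost: in $x$ (each $\partial_\vphi$ hitting $\Phi_2^{\pm 1}$ costs $\delta$, with a further $+1$ for its Lipschitz variation) versus in $\vphi$ (the Sobolev embedding requirement $r>r_1+d/2$ to trade $L^\infty_\vphi$ for $H^r_\vphi$ on the flow factors). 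The smallness condition \eqref{smallcondition} ensures that all factors of $\mathcal N^{\gamma,\mathcal O}_{\delta,r,p}(S_2)$ appearing at higher Leibniz orders $k_1+k_3\geq 1$ are absorbed, so that the final bound is linear in $|R|_{\rho,r,s}$ as claimed.
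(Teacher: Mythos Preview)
Your proposal is correct and follows essentially the same Leibniz-expansion approach as the paper. One minor point: the bounds \eqref{flusso3} on $\partial_\vphi^{k}\Phi_2^{\pm1}$ are already uniform in $\vphi$, so no Sobolev embedding is needed for the flow factors; in the paper's version the condition $r>r_1+d/2$ is instead used to control $\sup_\vphi\|\partial_\vphi^{k_2}R(\vphi)\|$ via \eqref{flusso51} (the paper first bounds the $H^{r_1}_\vphi$-norm of $G_2$ by its $C^{r_1}_\vphi$-norm and then takes sup on all three factors), whereas in your $L^\infty$--$L^2$--$L^\infty$ variant this embedding is not strictly needed.
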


\begin{proof}
We need to prove  that the map
$\vphi\mapsto \Gamma(\vphi)$
is in $H^{r_1}(\mathbb{T}^{d}; \mathcal{L}(H^{s};H^{s+\rho_1}))$. 
We note that 
\begin{align}
|G_2&|_{\rho_1,r_1,s}\lesssim\sum_{k=0}^{r_1}\sup_{\vphi\in\mathbb{T}^{d}}
\|(\pa_{\vphi}^{k}G_2)(\vphi)\|_{\mathcal{L}(H^{s};H^{s+\rho_1})}\label{flusso52}\\
&\lesssim
\sum_{k=0}^{r_1}
\sum_{\substack{ k_1+k_2+k_3=k\\ k_i\geq 0}}
\sup_{\vphi\in\mathbb{T}^{d}}\|(\pa_{\vphi}^{k_1}\Phi_2)(\vphi)(\pa_{\vphi}^{k_2}R)(\vphi)
(\pa_{\vphi}^{k_3}\Phi^{-1}_2)(\vphi)\|_{{\mathcal{L}(H^{s};H^{s+\rho_1})}}\,.\nonumber
\end{align}
We estimate separately each summand in \eqref{flusso52}.
First of all notice that,
by the definition of the norm in \eqref{flusso50} and the fact that $r>r_1+d/2$,
one has
\begin{equation}\label{flusso51}
\sup_{\vphi\in\mathbb{T}^{d}}\|\pa_{\vphi}^{k_{2}}(R(\vphi))\|_{\mathcal{L}(H^{s};H^{s+\rho})}\lesssim
|R|_{\rho,r,s}\,.
\end{equation}
Hence the summand in \eqref{flusso52} 
with $k_1=k_3=0$ is trivially bounded by the right hand side
in \eqref{ubdd1}.
If at least one between $k_1,k_2$  is different from zero we have, for any $h\in \mathbb{H}^{s}$,
\[
\begin{aligned}
\|(\pa_{\vphi}^{k_1}\Phi_2)(\vphi)&(\pa_{\vphi}^{k_2}R)(\vphi)
(\pa_{\vphi}^{k_3}\Phi^{-1}_2)(\vphi)h\|_{H^{s+\rho_1}}\\
&\stackrel{\eqref{flusso3}}{\lesssim}
\mathcal{N}_{\delta,r,p}^{\gamma,\mathcal{O}}(S_2)
\|(\pa_{\vphi}^{k_2}R)(\vphi)
(\pa_{\vphi}^{k_3}\Phi^{-1}_2)(\vphi)h\|_{H^{s+\rho_1+k_1\delta}}\\
&\stackrel{\eqref{flusso51}}{\lesssim}\mathcal{N}_{\delta,r,p}^{\gamma,\mathcal{O}}(S_2)|R|_{\rho,r,s}
\|(\pa_{\vphi}^{k_3}\Phi^{-1}_2)(\vphi)h\|_{H^{s+\rho_1+k_1\delta-\rho}}\\
&\stackrel{\eqref{smallcondition}}{\lesssim}
\mathcal{N}_{\delta,r,p}^{\gamma,\mathcal{O}}(S_2)|R|_{\rho,r,s}
\|h\|_{H^{s+\rho_1+(k_1+k_3)\delta-\rho}}\,.
\end{aligned}
\]
Notice that $+\rho_1+(k_1+k_3)\delta-\rho\leq 0$ since $k_1+k_3\leq r_1$ and that
the estimate above is uniform in $\vphi\in \mathbb{T}^{d}$.
Hence, together with the \eqref{flusso52}, it implies the \eqref{ubdd1}
for the norm $|\cdot|_{\rho_1,r_1,s}$.
The Lipschitz bounds are obtained similarly taking into account the extra loss of derivatives
appearing in the estimates \eqref{flusso1bis}-\eqref{flusso3bis}.
\end{proof}
Similarly we prove in the bounded case:
\begin{lemma}\label{lemmaB3}
Let $r_1\geq 0$ and $r>r_1+d/2$,   $\rho>0$ and
consider  
 $R\in \mathcal{R}_{\rho,r}^{\gamma,\mathcal{O}}$.
Consider also the map $\Phi_{1}(\vphi):=\Phi_1^{\tau}(\vphi)_{|\tau=1}$,
where $\Phi_{1}^{\tau}(\vphi)$ is  given in Lemma \ref{lemmaB1}.
Then 
 $G_1(\vphi):=\Phi_{1}(\vphi) R(\vphi) \Phi_{1}^{-1}(\vphi)$ belongs to 
$\mathcal{R}^{\gamma,\mathcal{O}}_{\rho,r}$.
Moreover for any $s\geq0$ there exist $p$ and $C$ such that
\begin{align}
|G_1|_{\rho,r_1,s}^{\gamma,\mathcal{O}}
&\leq  C|R|_{\rho,r,s}^{\gamma,\mathcal{O}}
(1+\mathcal{N}_{m,r,p}^{\gamma,\mathcal{O}}(S))\,,\label{ubdd2}
\end{align}
\end{lemma}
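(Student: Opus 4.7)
The proof follows the same scheme as that of Lemma \ref{lemmaB2}, but is strictly simpler because the generator $S_1$ has order $m\leq 0$: by \eqref{flusso11}--\eqref{flusso12} the flow $\Phi_1^{\tau}$ and each of its $\vphi$-derivatives are bounded on every $H^{s'}$, so there is no loss of derivatives when conjugating. In particular the smoothing order $\rho$ of $R$ is preserved, which is why the output class is again $\mathcal{R}_{\rho,r_1}^{\gamma,\mathcal{O}}$ (with the same $\rho$).

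First I would start from the definition \eqref{flusso50} and, using $r_1+d/2<r$ together with Sobolev embedding in $\vphi$, reduce the estimate on $|G_1|_{\rho,r_1,s}^{\gamma,\mathcal O}$ to controlling
\[
\sum_{k=0}^{r_1}\sup_{\vphi\in\mathbb{T}^{d}}\|(\pa_{\vphi}^{k}G_1)(\vphi)\|^{\gamma,\mathcal{O}}_{\L(H^{s},H^{s+\rho})}\,,
\]
exactly as in \eqref{flusso52}. A Leibniz expansion of $\pa_\vphi^k(\Phi_1 R \Phi_1^{-1})$ then reduces the task to estimating, for each partition $k_1+k_2+k_3=k\leq r_1$, a single summand of the form $(\pa_\vphi^{k_1}\Phi_1)(\pa_\vphi^{k_2}R)(\pa_\vphi^{k_3}\Phi_1^{-1})$.

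Next, for each such summand I would chain three elementary bounds at a common Sobolev index: \eqref{flusso12} gives
\[
\sup_{\vphi}\|(\pa_{\vphi}^{k_i}\Phi_1^{\pm1})(\vphi)\|^{\gamma,\mathcal{O}}_{\L(H^{s'},H^{s'})}\lesssim 1+C\mathcal{N}^{\gamma,\mathcal{O}}_{m,r,p}(S_1)
\]
for every $s'\in\mathbb{R}$ (this is the step that fails in Lemma \ref{lemmaB2} and that is trivial here thanks to $m\leq 0$), while $\sup_\vphi\|(\pa_\vphi^{k_2}R)(\vphi)\|^{\gamma,\mathcal{O}}_{\L(H^s,H^{s+\rho})}\lesssim |R|^{\gamma,\mathcal{O}}_{\rho,r,s}$ follows from \eqref{flusso50} and the Sobolev embedding $H^r(\mathbb{T}^d)\hookrightarrow C^{r_1}(\mathbb{T}^d)$. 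Composing these at the index $s$ (no shift is needed, since the $\Phi_1^{\pm 1}$ factors preserve every $H^{s'}$), and using the smallness assumption \eqref{smallcondition} to make sense of $\Phi_1^{-1}$ via Neumann series, yields the pointwise bound $C|R|^{\gamma,\mathcal{O}}_{\rho,r,s}(1+\mathcal N^{\gamma,\mathcal{O}}_{m,r,p}(S_1))$ for each summand; summing over $k\leq r_1$ and over Leibniz partitions produces \eqref{ubdd2}.

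The Lipschitz part of the norm is handled identically, writing $G_1(\omega_1)-G_1(\omega_2)$ as a telescoping sum over the three factors and invoking the $\gamma$-weighted versions of the same estimates from Lemma \ref{lemmaB1}. I do not foresee any genuine obstacle: the only bookkeeping point is that, because $S_1$ is of non-positive order, every occurrence of $\Phi_1^{\pm1}$ (and of its $\vphi$-derivatives) can be absorbed into the constant $1+C\mathcal N^{\gamma,\mathcal O}_{m,r,p}(S_1)$ without any intermediate shift of Sobolev index, which is precisely what distinguishes the present statement from the unbounded case of Lemma \ref{lemmaB2}.
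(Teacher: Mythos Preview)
Your proposal is correct and follows exactly the approach the paper intends: the paper gives no separate proof for this lemma, merely stating ``Similarly we prove in the bounded case,'' i.e.\ one repeats the argument of Lemma~\ref{lemmaB2} with the simplification that $m\leq 0$ makes the flow and all its $\vphi$-derivatives bounded on every $H^{s'}$, so no Sobolev shift occurs and the smoothing order $\rho$ is preserved. Your Leibniz expansion, chaining of \eqref{flusso12} and \eqref{flusso51}, and telescoping for the Lipschitz part are precisely the steps of the proof of Lemma~\ref{lemmaB2} specialized to this easier situation (one cosmetic remark: $\Phi_1^{-1}=e^{-\ii S_1}$ is itself a flow, so you do not actually need Neumann series to invert it).
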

%

\bigskip

\end{document}